\documentclass{amsart}
\usepackage[utf8]{inputenc}

\usepackage{orcidlink}

\usepackage{comment}
\usepackage{amsmath,amsfonts,amssymb,amsthm}
\usepackage{hyperref}
\usepackage{thmtools}
\usepackage{xcolor}
\usepackage{tikz,tikz-cd}
\usetikzlibrary{matrix,arrows}
\usepackage{enumitem}
\usepackage{mathabx}
\usepackage{multirow}
\usepackage{float}
\usepackage[capitalise]{cleveref}
\usepackage{verbatim}

\newcommand{\id}{\operatorname{id}}
\newcommand{\Hom}{\operatorname{Hom}}
\newcommand{\End}{\operatorname{End}}
\newcommand{\Aut}{\operatorname{Aut}}

\newcommand{\Q}{\mathbb{Q}}
\newcommand{\Z}{\mathbb{Z}}
\newcommand{\N}{\mathbb{N}}
\newcommand{\Ann}{\operatorname{Ann}}
\newcommand{\Soc}{\operatorname{Soc}}
\newcommand{\Fix}{\operatorname{Fix}}

\newcommand{\Stab}{\operatorname{Stab}}
\newcommand{\Triv}{\operatorname{Triv}}
\newcommand{\op}{\operatorname{op}}

\newcommand{\CD}{\operatorname{CD}}
\newcommand{\PStab}{\operatorname{PStab}}

\makeatletter
\newtheorem{thm}{Theorem}[section]
\newtheorem{lem}[thm]{Lemma}
\newtheorem{cor}[thm]{Corollary}
\newtheorem{pro}[thm]{Proposition}

\newtheorem{defn}[thm]{Definition}

\newtheorem{question}[thm]{Question}

\newtheorem{rem}[thm]{Remark}
\newtheorem{exa}[thm]{Example}

\makeatother

\usepackage{ifthen, xcolor}
\newboolean{commentBoolVar}
\setboolean{commentBoolVar}{true} 
\newcommand{\colourcomment}[3]
{%
\ifthenelse{\boolean{commentBoolVar}}{{\color{#2}(#1: #3)}}{}%
}%

\title[Finiteness conditions on skew braces and solutions of YBE]{Finiteness conditions on skew braces and solutions of the Yang-Baxter equation}

\author[Rosa Cascella]{Rosa Cascella\orcidlink{0009-0006-6386-7252}}

\address{%
Dipartimento di Matematica e Applicazioni "Renato Caccioppoli", \\
Università degli Studi di Napoli Federico II,\\
Via Cinthia 26,\\
80126 Naples,\\
Italy}

\email{rosa.cascella@unina.it}
\author[Silvia Properzi]{Silvia Properzi\orcidlink{0000-0002-8228-9156}}
\address{Department of Mathematics and Data Science,\\
Vrije Universiteit Brussel,\\
Pleinlaan 2,\\
1050 Brussels,\\
Belgium}
\email{silvia.properzi@vub.be}

\author[Arne Van Antwerpen]{Arne Van Antwerpen\orcidlink{0000-0001-7619-6298}}
\address{Department of Mathematics and Statistics,\\
Maynooth University,\\
Maynooth,\\
Ireland}
\email{Arne.VanAntwerpen@mu.ie}

\subjclass{Primary 16T25; Secondary 81R50, 20F24, 20N99}

\keywords{Yang-Baxter equation, skew brace, FC-group, finiteness condition}

\date{\today}

\begin{document}

\begin{abstract}
  A finite non-degenerate set-theoretic solution $(X,r)$ of the Yang-Baxter equation gives rise to a structure skew brace $B(X,r)$
  that is a $\lambda_f$-skew brace, i.e.
  every element has finitely many $\lambda$-images, and whose additive group is $FC$.
  This motivates the study of finiteness conditions on skew braces.
  We first study the general class of $\lambda_f$ skew braces and the subclass where the additive group is $FC$, showing that these properties share a resemblance to finite conjugacy, having an analog of the $FC$-center and several analogous structural results.
 Furthermore, by passing through the structure skew brace of a solution,
 this property measures whether elements are contained in a finite decomposition factor,
 identifying a class of infinite solutions that may exhibit similar properties to finite ones.
 Finally, we show that for a sub skew brace where both groups have finite index,
 both indices need to coincide and that such a sub skew brace contains a strong left ideal of finite index.
\end{abstract}

\maketitle

\section{Introduction}

The Yang-Baxter equation (YBE) is a fundamental equation originating in mathematical physics in the work of Nobel Laureate C.N. Yang \cite{MR261870} and R. Baxter \cite{MR690578}, which is intimately connected to diverse fields of mathematics, such as TQFT's, quantum computation, representations of the braid group, knot theory, Hopf algebras and quantum groups. Recall that a solution to the Yang-Baxter equation is a tuple $(V,R)$ where $V$ is a vector space and $R\colon V \otimes V \rightarrow V \otimes V$ is a bijective linear map such that on $V^{\otimes 3}$ one has that $$(R \otimes \id_V) (\id_V\otimes R)(R \otimes \id_V) = (\id_V\otimes R)(R \otimes \id_V) (\id_V\otimes R).$$ 
In \cite{MR1183474} Drinfel'd remarked that many families of solutions, known at that time, stem from so-called set-theoretic solutions. A set-theoretic solution of the Yang-Baxter equation is a tuple $(X,r)$ where $X$ is a non-empty set and $r:X \times X \rightarrow X \times X$ is a bijective map satisfying on $X^3$ the equality $$ (r \times \id_X)(\id_X \times r)(r \times \id_X) = (\id_X \times r)(r \times \id_X)(\id_X \times r).$$ 

Due to the vast number of set-theoretic solutions of the YBE, attention was mostly restricted to bijective non-degenerate solutions, which we will call solutions henceforth. An approach to solutions through algebraic structures was initiated by seminal works of Etingof, Schedler and Soloviev \cite{MR1722951} and Gateva-Ivanova and Van den Bergh \cite{MR1637256}.

Recently, Rump \cite{MR2278047} and Guarnieri and Vendramin \cite{MR3647970} introduced skew braces, algebraic structures that are deeply linked to bijective non-degenerate set-theoretic solutions of the Yang-Baxter equation. A skew brace is a set $B$ with two group operations $(B,+)$ and $(B,\circ)$ that satisfy a skew left distributivity condition. This skew left distributivity gives rise to an action $\lambda$ by automorphisms of $(B,\circ)$ on $(B,+)$, which is at the core of the structure of the skew brace. 
Moreover, utilizing the $\lambda$-map, it was shown \cite{MR3647970, LYZset,MR2278047} that a skew brace has an associated solution and vice versa every solution has an associated skew brace. This uncovered a deep link between algebraic properties of skew braces and combinatorial properties of set-theoretic solutions, which has been extensively explored (cf. \cite{MR3815290,Cast22, MR4457900, MR3957824,s, MR4470170,MR4023387, cablingorig, MR4163866}). For example, in \cite{MR4023387} it was shown that a decomposition of a solution into a disjoint union of solutions translates on skew braces to a factorization into so-called strong left ideals. Recently, interest has grown in the study of skew braces and solutions of infinite order \cite{MR4718182, index, conju, dedekind}. In this article, we introduce a class of infinite solutions that behave similarly to finite solutions, namely solutions where every element is contained in a finite decomposition factor.

It turns out that this can be translated to a condition on skew braces, which is reminiscent of so-called Finite Conjugacy ($FC$)-groups in group theory, originally introduced by R. Baer \cite{FC}. The behavior of $FC$-groups has been investigated by several authors over the last seventy years (cf. the monograph \cite{Tomkinson} and references therein); among the basic properties of $FC$-groups, it is well known that the commutator subgroup of any $FC$-group is locally finite \cite{neumann} and that finitely generated groups with the $FC$ property are central-by-finite \cite{FC}. 

In a skew brace there is a plethora of choices for a notion of conjugate of an element, as remarked in \cite{conju}. Indeed, one has the conjugates of the additive group $(B,+)$, the multiplicative group $(B,\circ)$ and $\lambda$-images of an element. In \cite{s}, so-called $(S)$-skew braces are introduced, i.e. skew braces where every element has only finitely many $\lambda$-images and finitely many additive and multiplicative conjugates. 
Among structural properties, which are similar to those of $FC$-groups, it turns out that finite solutions whose associated skew brace has this property are necessarily derived indecomposable. The notion of an $(S)$-skew brace corresponds to demanding that the semi-direct product $(B,+)\rtimes_{\lambda}(B,\circ)$ is an $FC$-group. 

In this paper, we focus on elements with a finite amount of additive conjugates and a finite amount of $\lambda$-images, which we call $\theta_f$-elements and skew braces where all elements are $\theta_f$. The latter is equivalent to asking that the natural action of the semi-direct product $(B,+)\rtimes_\lambda (B,\circ)$ has finite orbits. This corresponds to solutions where every element is contained in a finite decomposition factor and hence covers all finite solutions.

In the first section, we provide preliminaries to make the text self-contained and several examples of infinite skew braces, which we use throughout the text as examples to illustrate the sharpness of several results. 
In \cref{secindex}, we discuss the notion of index of a sub skew brace. 
In \cite{index} it was remarked that a natural extension of this notion to the index of a sub skew brace $A$ in a skew brace $B$ is unclear.
A priori, the index $|(B,+):(A,+)|$  may differ from the $|(B,\circ):(A,\circ)|$. One says that a skew brace $B$ is index-preserving if for any sub skew brace $A$ one has that if $|(B,+):(A,+)|$ or $|(B,\circ):(A,\circ)|$ is finite, then both are finite and equal. 
In \cite{index,dedekind} several closure properties of the class of index-preserving skew braces were shown. 
Entailing, for example, that all weakly soluble skew braces are index-preserving.
We contribute to this study by showing that if $|(B,+):(A,+)|$ and $|(B,\circ):(A,\circ)|$ are both finite, then these indices are necessarily equal. 
The proof reveals a result of independent interest, namely every sub skew brace of finite index contains a strong left ideal of finite index, which extends a similar group-theoretic result.
We show that for two-sided skew braces this can even be strengthened to an ideal of finite index. 
In \cref{seclambda}, we examine $\lambda_f$-elements, i.e. those that only have a finite amount of $\lambda$-images, and skew braces where all elements are $\lambda_f$. We show that the notion of being finitely generated is well-defined and show some structural results. 
In \cref{sectheta} we study $\lambda_f$-skew braces where the additive group is $FC$, called $\theta_f$.
These have a structure similar to $FC$-groups, where the socle plays an analogous role to the center in group theory.
In particular, we show an analog of Dietzmann's lemma and that finitely generated $\theta_f$ skew braces have a socle of finite index.
Moreover, we establish a characterization similar to \cite[Theorem 2]{FC}. 
Finally, in section \cref{secsolution} we connect this novel algebraic notion to solutions. We show that the structure skew brace of a solution $(X,r)$ is $\theta_f$ if and only if in the injectivization of $(X,r)$ every element is contained in a finite decomposition factor. As the structure skew brace of a finite solution is $\theta_f$, this identifies a class of solutions whose behavior is similar to that of finite solutions.

\section{Preliminaries on skew braces and solutions}

    A \textit{skew brace} is a triple $(B,+,\circ)$, where $(B,+)$ and $(B,\circ)$ are groups such that for all $a,b,c \in B$ one has that $$ a \circ (b+c) = a \circ b - a + a\circ c,$$
    where $-a$ denotes the inverse of $a$ in $(B,+)$ and for ease of notation $\circ$ takes precedence over $+$. 
    Similarly, we denote $\overline{a}$ the inverse of $a$ in $(B,\circ)$.
    If $B$ furthermore satisfies the right skew distributivity law $$ (c+b) \circ a = c\circ a -a + b\circ a,$$ for all $a,b,c \in B$, then $B$ is said to be a \textit{two-sided skew brace}.

    For a skew brace $(B,+,\circ)$, the group $(B,+)$ is called the \textit{additive group} of $B$ and $(B,\circ)$ is called the \textit{multiplicative group} of $B$. 
    Let $\chi$ be a group theoretical property (e.g. abelian, nilpotent, ...), then one says that $B$ is of $\chi$-type if $(B,+)$ is a $\chi$-group.

    One says that a skew brace $B$ is \textit{trivial}, if the operations coincide, i.e. for all $a,b \in B$ one has that $a\circ b =  a+b$. Similarly, a skew brace $B$ is called \textit{almost trivial}, if for all $a,b\in B$ one has that $a\circ b= b+a$. Moreover, a skew brace is called \textit{weakly trivial}, if it is a sub skew brace of the direct product of a trivial and an almost trivial skew brace.

    Given a skew brace $(B,+,\circ)$, one has that $(B,+^{op},\circ)$ is a skew brace, which is called the \textit{opposite skew brace}.
    
    Central to the theory of skew braces is the so-called \textit{$\lambda$-morphism}, which measures the difference between both operations. Denote for any $a,b \in B$ the expression $$ \lambda_a(b) = -a+a\circ b.$$
    Then, it was shown that $\lambda_a \in \Aut(B,+)$. Moreover, the map $\lambda\colon (B,\circ) \rightarrow \Aut(B,+)$ is a group morphism. The $\lambda$-map provides a clear connection between the additive and multiplicative structure of a skew brace, which is condensed into the equality $$ a\circ b =  a + \lambda_a(b).$$
    A \textit{sub skew brace} of a skew brace $B$ is a subgroup $A$ of $(B,+)$ such that $\lambda_a(A)=A$ for all $a\in A$. This coincides with the fact that $A$ is also a subgroup of $(B,\circ)$.
     We will denote with $|B:A|_+$ the index of $(A,+)$ in $(B,+)$ and
     with $|B:A|_\circ$ the index of $(A,\circ)$ in $(B,\circ)$.
    A \textit{left ideal} of $B$ is a subgroup of $(B,+)$ such that for all $b \in B$ one has that $\lambda_b(L)=L$. 
    Moreover, if $L$ is a left ideal of $B$ and normal in $(B,+)$, then $L$ is called a \textit{strong left ideal}. 
    Finally, an \textit{ideal} of $B$ is a strong left ideal $I$ of $B$ such that $I$ is normal in $(B,\circ)$. The latter, as expected, coincides with the kernel of a \textit{skew brace morphism}, i.e. a map between two skew braces that is a group morphism in both the additive and multiplicative groups.

 Let $B$ be a skew brace. Then
 $\lambda$ is an action by automorphisms of $(B,\circ)$ on $(B,+)$ 
and there is an action by automorphisms of $(B,+)\rtimes_{\lambda} (B,\circ)$ on $(B,+)$ defined by 
\[
\theta_{(a,b)}(c)=a+\lambda_b(c)-a.
\]

Let $B$ be a skew brace and $x\in B$. We denote the $\lambda$-\emph{stabilizer of $x$ in $B$} by $\Stab_\lambda(x)=\{a\in B\colon \lambda_a(x)=x\}$.  It is easy to observe that $\Stab_\lambda(x)\leq (B,\circ)$. 
The \emph{$\lambda$-orbit of $x$ in $B$} is $[x]_\lambda=\{\lambda_a(x)\colon a\in B\}$.
Moreover, the set of elements with a trivial $\lambda$-orbit $\Fix(B)=\{a\in B\colon \lambda_x(a)=a \textnormal{ for all } x\in B\}$ is a trivial sub skew brace of $B$.
By the Orbit-Stabilizer theorem, $\left|[a]_\lambda\right|\left|\Stab_\lambda(a)\right|=|B|$ for every $a\in B$.   

 Similarly, the \emph{$\theta$-stabilizer of $x$ in $B$} 
is 
\[
\Stab_\theta(x)=\{(a,b)\in (B,+)\rtimes_{\lambda} (B,\circ)\colon \theta_{(a,b)}(x)=x\}.
\]

The \emph{$\theta$-orbit of $x$ in $B$} is $[x]_\theta=\{\theta_{(a,b)}(x) \colon (a,b)\in (B,+)\rtimes_{\lambda} (B,\circ)\}$.
By the Orbit-Stabilizer theorem, $\left|[a]_\theta\right|\left|\Stab_\theta(a)\right|=|B|^2$ for every $a\in B$.

The \emph{socle of $B$} is the ideal $\Soc(B)=\ker\lambda\cap Z(B,+)$ and 
the \emph{annihilator of $B$} is the ideal $\Ann(B)=\Soc(B)\cap Z(B,\circ)$.

\begin{defn}
Let $B$ be a skew brace and $H\leq (B,+)$. We denote for every $b\in B$,
\[
\lambda_b(H)=\{\lambda_b(h)\mid h\in H\}.
\]
Moreover, we set 
\[
\Stab_\lambda(H)=\{b\in B\mid \lambda_b(H)=H\}
\]
and
\[
\PStab_B(H)=\{b\in B\mid \lambda_b(h)=h\text{ for every }h\in H\}
\]   

\end{defn}

\begin{pro}\label{quotient}
Let $B$ be a skew brace and $H$ be a subgroup of $(B,+)$.
Then $\PStab_B(H)$ is a normal subgroup of $(\Stab_\lambda(H),\circ)$ with $(\Stab_\lambda(H)/\PStab_B(H),\circ)$ isomorphic to a subgroup of $\Aut(H,+)$.
\end{pro}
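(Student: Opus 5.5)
The plan is to realise $\PStab_B(H)$ as the kernel of a group homomorphism from $(\Stab_\lambda(H),\circ)$ into $\Aut(H,+)$. Normality and the embedding of the quotient then both follow from the first isomorphism theorem.

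First I check that $\Stab_\lambda(H)$ is a subgroup of $(B,\circ)$. Since $\lambda\colon (B,\circ)\to\Aut(B,+)$ is a group morphism, we have $\lambda_0=\id$, so $0\in\Stab_\lambda(H)$. For $a,b\in\Stab_\lambda(H)$ we get $\lambda_{a\circ b}(H)=\lambda_a(\lambda_b(H))=\lambda_a(H)=H$. For inverses, applying $\lambda_{\overline{a}}=\lambda_a^{-1}$ to the equality $\lambda_a(H)=H$ gives $H=\lambda_{\overline a}(H)$. The same computation with pointwise equalities shows that $\PStab_B(H)\subseteq \Stab_\lambda(H)$; this inclusion is clear anyway, since pointwise fixing $H$ implies $\lambda_b(H)=H$.

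Next, for $a\in\Stab_\lambda(H)$, the restriction $\lambda_a|_H$ maps $H$ onto $H$. It is additive and injective because $\lambda_a\in\Aut(B,+)$, so it is an element of $\Aut(H,+)$. Define
\[
\varphi\colon (\Stab_\lambda(H),\circ)\to \Aut(H,+),\qquad a\mapsto \lambda_a|_H.
\]
Then $\varphi(a\circ b)=\lambda_{a\circ b}|_H=\lambda_a|_H\,\lambda_b|_H=\varphi(a)\varphi(b)$, where the middle equality uses $\lambda_b(H)=H$ so that the composite restricts correctly. Hence $\varphi$ is a group morphism.

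Finally, $\ker\varphi=\{a\in\Stab_\lambda(H)\colon \lambda_a(h)=h \text{ for all } h\in H\}$, which equals $\PStab_B(H)$ by the inclusion noted above. Kernels are normal, so $\PStab_B(H)\trianglelefteq(\Stab_\lambda(H),\circ)$. The first isomorphism theorem gives $\Stab_\lambda(H)/\PStab_B(H)\cong\varphi(\Stab_\lambda(H))\le\Aut(H,+)$. There is no real obstacle here; the only point needing care is that each restriction is genuinely a bijection of $H$, and this uses $\lambda_a(H)=H$ rather than merely $\lambda_a(H)\subseteq H$.
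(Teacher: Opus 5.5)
Your proof is correct and takes essentially the paper's route: the paper also realises $\PStab_B(H)$ as the kernel of the restriction homomorphism $b\mapsto \lambda_b|_H$ from $(\Stab_\lambda(H),\circ)$ into $\Aut(H,+)$, though it first checks normality by a direct conjugation computation, which your kernel argument makes redundant. Your explicit checks that $\Stab_\lambda(H)$ is a subgroup of $(B,\circ)$ and that each restriction is a bijection of $H$ supply details the paper leaves implicit.
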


\begin{proof}
    Let us consider $b\in \Stab_\lambda(H)$, $x\in \PStab_B(H)$ and $h\in H$.
    Then 
    \[
    \lambda_{b\circ x\circ \overline{b}}(h)=\lambda_{b}(\lambda_x(\lambda_{\overline{b}}(h)))=\lambda_{b}(\lambda_{\overline{b}}(h))=h
    \]
    and $\PStab_B(H)\lhd (\Stab_\lambda(H),\circ)$. Observe that if $b$ is an element of $\Stab_\lambda(H)$, the function $\lambda_b\big|_H$ is an automorphism of $H$. Finally, the function $$\tau:b\in \Stab_\lambda(H)\to \lambda_b\big|_H\in \Aut(H,+)$$ is a homomorphism, where $\ker \tau=\PStab_B(H)$. 
\end{proof}

For all $a,b\in B$ we denote $a*b=\lambda_a(b)-b$.
Let $X,Y$ be subsets of $A$. Then $X*Y$ is the additive subgroup of $B$ generated by $x*y$ for all $x\in X$ and $y\in Y$.
We set $B^{2}=B*B$ and $B^{2}_{op} = B \ast_{op}B$, where $\ast_{op}$ corresponds to the star operation in the opposite skew brace of $B$.

The \emph{commutator} $B'$ of $B$ is the additive
subgroup of $B$ generated by the commutator $[B, B]_+$ of $(B,+)$ and $B^2$. 
For all $x,a\in B$ we denote 
$[x]_\circ=\{a\circ x \circ \overline{a}\mid a\in B\}$
and
$[x]_+=\{a+x-a\mid a\in B\}$. 

Now we note some elementary properties necessary in the subsequent sections. First, we recall the characterization of two-sided skew braces of abelian type \cite{Lau, MR2278047}.

\begin{pro}
    A skew brace $(B,+,\circ)$ is a two-sided skew brace of abelian type if and only if $(B,+,*)$ is a radical ring. Moreover, the notion of (left) ideals of both structures coincides.
\end{pro}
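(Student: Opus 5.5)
The plan is to pass between the two structures through the operation $a*b=\lambda_a(b)-b=-a+a\circ b-b$ (using abelian addition) and to translate each axiom directly, treating the two implications separately.

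\emph{From brace to radical ring.} Assume $B$ is a two-sided skew brace with $(B,+)$ abelian. First I show that $*$ distributes over $+$ on both sides. Left distributivity, $a*(b+c)=a*b+a*c$, holds because $\lambda_a$ is additive. For right distributivity I expand
\[
(a+b)*c=-(a+b)+(a+b)\circ c-c
\]
and use the right skew distributivity law $(a+b)\circ c=a\circ c-c+b\circ c$. Since addition is abelian, the result rearranges to $(a*c)+(b*c)$.

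Next I show that $*$ is associative. Because $a\circ b=a+b+a*b$, associativity of $\circ$ gives
\[
(a+b+a*b)\circ c = a\circ(b+c+b*c).
\]
I expand the left side with the right law and the right side with the left law. The terms that are linear and quadratic then cancel, and what remains is $(a*b)*c=a*(b*c)$. This expansion is the main bookkeeping step, and I expect it to be the only real obstacle. To keep it manageable, I will rewrite everything through $x\circ y=x+y+x*y$ together with the distributivity of $*$ already established.

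It remains to see that the ring is radical. The adjoint (circle) operation of the ring is exactly $a\circ b=a+b+a*b$. This is a group operation with identity $0$, since $(B,\circ)$ is a group. Hence every element is quasi-regular, so $(B,+,*)$ is a radical ring.

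\emph{From radical ring to brace.} Conversely, given a radical ring $(B,+,\cdot)$, define $a\circ b=a+b+ab$. Then $(B,\circ)$ is a group precisely because the ring is radical. Both distributive laws reduce to ring distributivity:
\[
a\circ(b+c)=a+b+c+ab+ac=a\circ b-a+a\circ c,
\]
and the right law is checked symmetrically. So $B$ is a two-sided skew brace of abelian type, and by construction $a*b=ab$.

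\emph{Ideals.} A subgroup $L$ of $(B,+)$ satisfies $\lambda_b(L)\subseteq L$ for all $b\in B$ if and only if $b*l=\lambda_b(l)-l\in L$ for all $b\in B$ and $l\in L$, that is, if and only if $BL\subseteq L$. Thus left ideals of the brace are exactly left ideals of the ring. Normality in $(B,+)$ is automatic since addition is abelian.

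For two-sided ideals I compare normality in $(B,\circ)$ with the condition $LB\subseteq L$. The identity
\[
l\circ b - b\circ l = lb-bl
\]
shows that, for a left ideal $L$, the condition $LB\subseteq L$ is equivalent to $b\circ l-l\circ b\in L$, since $lb=(lb-bl)+bl$ and $bl\in L$. That membership in turn amounts to $b\circ l\circ\overline{b}\in L$, using the identity
\[
b\circ l\circ \overline b = b\circ l - b + \lambda_{b\circ l}(\overline b)
\]
and absorbing the resulting terms via the left-ideal property. This gives the correspondence between ideals of the two structures.
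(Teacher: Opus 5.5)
The paper gives no proof of this proposition; it is recalled from the literature with a citation, and your direct translation through $a*b=-a+a\circ b-b$ is the standard argument. The following parts are all correct:
\begin{itemize}
\item distributivity on both sides;
\item associativity: both sides of $(a\circ b)\circ c=a\circ(b\circ c)$ reduce to $a+b+c+a*b+a*c+b*c$ plus $(a*b)*c$, respectively $a*(b*c)$;
\item quasi-regularity, and the converse direction;
\item the correspondence of left ideals.
\end{itemize}

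The one step that fails as written is the last one. Your identity $b\circ l\circ\overline{b}=b\circ l-b+\lambda_{b\circ l}(\overline{b})$ is false. For $l=0$ the left side is $0$, while the right side is $\lambda_b(\overline{b})=-b$. The "absorbing" argument built on it therefore does not go through as stated.

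The correct expansion is $b\circ l\circ\overline{b}=b\circ l+\lambda_{b\circ l}(\overline{b})$. Using $b\circ l=b+\lambda_b(l)$, $\lambda_l(\overline{b})=\overline{b}+l*\overline{b}$ and $\lambda_b(\overline{b})=-b$, this gives
\[
b\circ l\circ\overline{b}=\lambda_b\bigl(l+l*\overline{b}\bigr).
\]
Since $\lambda_b(L)=L$ and $l\in L$, this element lies in $L$ if and only if $l*\overline{b}\in L$. Hence a left ideal $L$ is normal in $(B,\circ)$ exactly when $L*B\subseteq L$, i.e.\ when $LB\subseteq L$ in the ring. With this, your detour through $b\circ l-l\circ b$ becomes unnecessary.

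If you prefer to keep the detour, finish it with cosets instead. For a left ideal one has $b\circ L=b+L$. So the condition $b\circ l-l\circ b\in L$ for all $l\in L$ says precisely that $L\circ b\subseteq b+L=b\circ L$. Holding for all $b$, this is equivalent to normality of $L$ in $(B,\circ)$. With either repair the argument is complete.
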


Moreover, two-sided skew braces have a desirable property, which breaks down for most other skew braces.

\begin{rem}
\label{mult conj in two-sided}
    If $B$ is a two-sided skew brace, then 
    multiplicative conjugation is a skew brace automorphism.
\end{rem}

In \cite{dedekind} Caranti, Del Corso, Ferrara and Trombetti established that the class of skew braces for which the index is well-defined is local and closed under extensions. 

\begin{pro}\label{indexdedekind}
Let $B$ be a skew brace 
that has an ideal $I$ 
such that for every sub skew brace $D$ of $I$ and sub skew brace $C$ of $B/I$
    \[
    |I:D|_+=|I:D|_\circ \text{ and } |B/I:C|_+=|B/I:C|_\circ.
    \]

Then for every sub skew brace $A$ of $B$ 
    \[
    |B:A|_+=|B:A|_\circ. 
    \]
\end{pro}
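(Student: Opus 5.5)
We prove \cref{indexdedekind} by comparing indices along the chain $A \supseteq A\cap I$ and $B\supseteq AI \supseteq A$. Here $A I = A + I$; since $I$ is an ideal, $A+I = A\circ I$ is a sub skew brace of $B$. The key fact is that cosets of an ideal coincide in the two groups: for $a\in B$ we have $a\circ I = a+\lambda_a(I) = a+I$. Hence $B/I$ carries a well-defined skew brace structure whose underlying set of cosets is the same for $+$ and $\circ$.

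\textbf{Step 1 (the top part).} The image $\pi(A)=(A+I)/I$ is a sub skew brace $C$ of $B/I$. The additive cosets of $A+I$ in $B$ are in bijection with the additive cosets of $C$ in $B/I$, because $A+I\supseteq I$ and $I$ is normal in $(B,+)$. The same correspondence holds for $\circ$, since $I\lhd (B,\circ)$ and $A\circ I=A+I$. The hypothesis therefore gives
\[
|B:A+I|_+=|B/I:C|_+=|B/I:C|_\circ=|B:A\circ I|_\circ=|B:A+I|_\circ .
\]

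\textbf{Step 2 (the bottom part).} We compare $|A+I:A|_+$ with $|A+I:A|_\circ$. The map $i\mapsto A+i$ induces a bijection between the right cosets of $A\cap I$ in $(I,+)$ and the right cosets of $A$ in $(A+I,+)$. The analogous map gives a bijection between the right cosets of $A\cap I$ in $(I,\circ)$ and the right cosets of $A$ in $(A\circ I,\circ)$. This is just the group-theoretic fact $|HN:H|=|N:H\cap N|$ for $N$ normal, applied in each group, and $A+I=A\circ I$ as sets. Now $D=A\cap I$ is a sub skew brace of $I$, being an intersection of sub skew braces. The hypothesis then yields
\[
|A+I:A|_+=|I:D|_+=|I:D|_\circ=|A\circ I:A|_\circ .
\]

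\textbf{Step 3.} Multiplicativity of the index in each group, $|B:A|=|B:A+I|\cdot|A+I:A|$, holds also in the infinite-cardinal sense. Combined with Steps 1 and 2, it gives $|B:A|_+=|B:A|_\circ$.

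The only delicate point is checking that the coset correspondences are valid simultaneously in both groups. This rests entirely on the identity $x\circ I=x+I$ for every $x\in B$, and on $A+I$ being a sub skew brace. Both follow from $\lambda_x(I)=I$ together with the normality of $I$ in both groups, so no serious obstacle is expected.
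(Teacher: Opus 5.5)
Your proof is correct. The paper does not prove this proposition itself; it quotes it without proof from \cite{dedekind} (closure of the index-preserving class under extensions), so there is no in-paper argument to compare against.

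Your argument is the natural self-contained one. The identity $x\circ I=x+I$ does three jobs:
\begin{enumerate}
\item it makes $A+I=A\circ I$ a subgroup of both groups, hence a sub skew brace, since $\lambda_x(y)=-x+x\circ y$;
\item it makes the underlying set of $B/I$ the same for both operations;
\item it lets you apply, simultaneously in $(B,+)$ and $(B,\circ)$, the correspondence theorem to the index of $A+I$ in $B$, and the identity $|HN:H|=|N:H\cap N|$ to the index of $A$ in $A+I$.
\end{enumerate}
The tower law for cardinal indices then finishes the proof.

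Your explicit route gives slightly more than the bare statement. You invoke the hypotheses only for $C=(A+I)/I$ and $D=A\cap I$. When $|B:A|_+<\infty$, both $|B/I:C|_+=|B:A+I|_+$ and $|I:D|_+=|A+I:A|_+$ are finite. Hence the same proof shows that the one-sided property ``finite additive index implies equal indices'' passes from $I$ and $B/I$ to $B$.

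This one-sided form is what the paper actually needs in its corollary for two-sided skew braces. There, \cref{atts} supplies only this direction for the ideal $I=B^2\cap B^2_{op}$.

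Similarly, a product of two indices is finite if and only if both factors are. So your argument works unchanged under the convention that ``equal'' means ``both infinite, or both finite and equal''. That is how index-preserving is defined in the paper.
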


In the remainder of this section we recall the link between solutions of the Yang-Baxter equation and skew braces and establish some examples of skew braces that we will refer to throughout the text.

 \begin{defn}
     A set-theoretic solution to the Yang-Baxter equation is a tuple $(X,r)$ such that $X$ is a non-empty set and $r\colon X \times X \rightarrow X \times X$ is a bijective map such that $$ (r \times \id_X)(\id_X \times r)( r \times \id_X) = (\id_X \times r)(r \times \id_X)(\id_X\times r).$$  Denote $r(x,y) = (\lambda_x(y),\rho_y(x))$. If $\lambda_x,\rho_x$ are bijective for all $x \in X$, then one says that $(X,r)$ is non-degenerate. 
 \end{defn}
Note that it was recently proven in \cite{MR5009092} that
the bijectivity of $r$ follows from the non-degeneracy condition.

From now on we abbreviate non-degenerate set-theoretic solution by {\it solution}. Skew braces both generate and govern solutions in the following sense. Every skew brace $B$ gives rise to a solution $(B,r_B)$, where 
\[
r_B(a,b) = ( \lambda_a(b), \overline{\lambda_a(b)}\circ a \circ b).
\]
Similarly, for a solution $(X,r)$ the structure group $G(X,r)$ is the group
$$ \left< x \in X \mid x\circ y = \lambda_x(y) \circ \rho_y(x)\right>.$$ 
Then, $G(X,r)$ has a natural skew brace structure $B(X,r)$, 
where the additive group is isomorphic to $G(X,r')$, 
where $(X,r')$ is the derived solution of $(X,r)$ 
defined by $r'(x,y) = (y, \lambda_{y}\rho_{\lambda_x^{-1}(y)}(x))$.
If $(X,r)$ is an \emph{involutive solution}, i.e. $r^2=\id_{X^2}$,
then $B(X,r)$ is a skew brace of abelian type. 
Furthermore, for involutive solutions, the canonical mapping $\iota :X \rightarrow B(X,r)$ is injective.
Solutions for which $\iota$ is injective are called \textit{injective solutions}.
One can consider the image $\iota(X)$ in $B(X,r)$, which is a subsolution of $(B(X,r),r_B)$
and has the property that 
$$ B(X,r) \cong B(\iota(X),r_B).$$
One says that $(\iota(X),r_B)$ is the \emph{injectivization of $(X,r)$}
and there is a canonical epimorphism of $X$ to its injectivization, induced by $\iota$.

\begin{defn}
    Let $(X,r)$ be a solution. Then, one defines the \emph{retract relation on $X$}
    as the relation $x \sim y$ if $\lambda_x=\lambda_y$ and $\rho_x=\rho_y$. 
    Then, $r$ induces a solution $\textup{Ret}(X,r)$ on $X/\sim$, which is called the \emph{retract of $(X,r)$}. Moreover, there is a canonical epimorphism $(X,r) \rightarrow \textup{Ret}(X,r)$.
\end{defn}
In \cite{jespers2019structure} it was shown, by construction of a cancellative congruence, 
that the injectivization morphism is a factor of the retract morphism. 
In \cite{MR1722951} the notion of a decomposable and indecomposable solution was introduced.
The latter should be viewed as building blocks that can be linked together. 
Below, we give a slightly different twist on this notion,
as we will be explicitly interested in the decomposition itself.

\begin{defn}
    Let $(X,r)$ be a solution. If there exists a partition $X=Y \cup Z$ such that $ (Y,r_{Y\times Y})$ and $(Z, r_{Z \times Z})$ are subsolutions, then the sets $Y$ and $Z$ are called \emph{decomposition factors}.
\end{defn}

Note that both the union and intersection of decomposition factors are decomposition factors.
As the complement of a decomposition factor is a decomposition factor by definition,
it follows that they form a sub Boolean algebra of the power set of $X$. 
A decomposition of a solution $(X,r)$ gives rise to a factorization of a skew brace as the sum of two strong left ideals.

\begin{thm}\label{decompgivesfactorization}
    Let $(X,r)$ be a solution and $Y$ a decomposition factor. Denote $Z=X \setminus Y$. Then, $B(X,r)$ is the sum of the strong left ideals $\left<Y\right>_+$ and $\left<Z\right>_+$.
\end{thm}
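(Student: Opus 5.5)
We work inside $B=B(X,r)$ and identify $X$ with its image $\iota(X)$; the relations of the structure group are exactly those holding among these generators. Put $L_Y=\left<Y\right>_+$ and $L_Z=\left<Z\right>_+$. The argument has three parts: first show that $B$ is generated additively by $X$, then that $B=L_Y+L_Z$, and finally that $L_Y$ and $L_Z$ are strong left ideals.

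\emph{Step 1: $X$ generates $(B,+)$.} The subgroup $\left<X\right>_+$ contains $X$, and we show it is closed under $\circ$. This follows from
\[
x\circ b=x+\lambda_x(b)
\]
together with the fact that $\lambda_x$ permutes $X$. The latter holds because the $\lambda$-map of $B$ restricts on $X$ to the $\lambda$ of the solution, and $\lambda_x$ is bijective by non-degeneracy. Hence $\lambda_x$ maps $\left<X\right>_+$ into itself for $x\in X$. Since $\lambda$ is a homomorphism from $(B,\circ)$, the same holds for $\lambda_{\overline{x}}=\lambda_x^{-1}$, and so for $\lambda_a$ with $a$ any element of $\left<X\right>_\circ=B$. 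Consequently $B=\left<X\right>_\circ\subseteq\left<X\right>_+$.

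\emph{Step 2: $B=L_Y+L_Z$.} This reduces to showing that $L_Y$ and $L_Z$ are normal subgroups of $(B,+)$. Indeed, $L_Y+L_Z$ is then a subgroup containing $X$, so by Step 1 it equals $B$.

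\emph{Step 3: left ideal property.} Since $Y$ is a subsolution, $\lambda_y(Y)\subseteq Y$ for $y\in Y$. For $z\in Z$ and $y\in Y$, the element $\lambda_z(y)$ cannot lie in $Z$. Otherwise bijectivity of $\lambda_z$ on $X$ together with $\lambda_z(Z)\subseteq Z$ would be violated, once we know that $\lambda_z$ restricted to $Z$ is a bijection of $Z$. I will obtain that bijectivity from the fact that $r$ restricted to $Z\times Z$ is a non-degenerate bijective subsolution; this needs the cited result that non-degeneracy/bijectivity behave well, and otherwise a counting argument on each $\lambda_z$-orbit. Granting it, $\lambda_x(Y)=Y$ for every $x\in X$, so $\lambda_x(L_Y)=L_Y$. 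As in Step 1, this extends to $\lambda_b(L_Y)=L_Y$ for all $b\in B$, and symmetrically for $Z$.

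\emph{Step 4: normality in $(B,+)$ (main obstacle).} Additive conjugation is governed by the derived solution: $(B,+)\cong G(X,r')$ with $r'(x,y)=(y,\sigma_y(x))$, where $\sigma_y=\lambda_y\rho_{\lambda_x^{-1}(y)}$. The defining relations of the additive group read
\[
x+y=y+\sigma_y(x).
\]
So $-y+x+y=\sigma_y(x)$, and it suffices to show that $\sigma_y(x)\in Y$ whenever $x\in Y$, for arbitrary $y\in X$.
\begin{itemize}
\item If $y\in Y$, this holds because everything is computed inside the subsolution $Y$.
\item If $y\in Z$, then $\lambda_x^{-1}(y)\in Z$ by Step 3, so $r(x,\lambda_x^{-1}(y))=(y,\rho_{\lambda_x^{-1}(y)}(x))$. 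Since $r$ is a bijection preserving $Y\times Y$ and $Z\times Z$, and $X=Y\sqcup Z$, I will check that $\rho_z(Y)\subseteq Y$ for $z\in Z$, by the same kind of argument as for $\lambda$. Then $\lambda_y$ of an element of $Y$ stays in $Y$ by Step 3.
\end{itemize}
The delicate point is establishing these mixed invariances, $\lambda_Z(Y)\subseteq Y$ and $\rho_Z(Y)\subseteq Y$, purely from the subsolution hypothesis. I would first attempt this using the Yang–Baxter equation on triples mixing $Y$ and $Z$, and fall back on the bijectivity of $r$ restricted to the blocks if that fails.

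Once $Y$ is closed under conjugation by the generators $\pm y$, $y\in X$, the subgroup $L_Y$ is normal in $(B,+)$, and likewise $L_Z$. This completes the proof.
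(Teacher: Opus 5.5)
The paper gives no proof of this theorem; it is quoted in the preliminaries from the literature on factorizations of skew braces, so there is no in-paper argument to compare against. Your proof is correct and is the standard argument: show that every $\lambda_x$, every $\rho_x$ and every derived map $\sigma_y$ preserves $Y$ and $Z$, then transport this to $B(X,r)$ via $\iota$. Three points should be tightened.

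\emph{The ``delicate point'' is already settled by Step 3.} Neither the Yang--Baxter equation nor a counting argument is needed, and the counting argument would fail for infinite $X$ anyway. Subsolutions are themselves non-degenerate solutions, so $\lambda_y|_Y$ and $\rho_y|_Y$ are bijections of $Y$ for $y\in Y$, and $\lambda_z|_Z$ and $\rho_z|_Z$ are bijections of $Z$ for $z\in Z$. Each $\lambda_x,\rho_x$ is also a bijection of $X$, so every one of them maps $Y$ onto $Y$ and $Z$ onto $Z$. This gives $\lambda_z(Y)=Y$ and $\rho_z(Y)=Y$ for $z\in Z$ directly.

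\emph{Normality needs conjugation in both directions.} In Step 4 you only verify $-\iota(y)+\iota(Y)+\iota(y)\subseteq\iota(Y)$. In an infinite group, one-sided invariance under conjugation by generators does not imply normality, so you also need conjugation by $-\iota(y)$. This holds because $\sigma_y\colon x\mapsto\lambda_y\rho_{\lambda_x^{-1}(y)}(x)$ is a bijection of $X$. Indeed, $r$ maps
\[
r^{-1}(\{y\}\times X)=\{(a,\lambda_a^{-1}(y))\mid a\in X\}
\]
bijectively onto $\{y\}\times X$. Since $\sigma_y$ maps $Y$ into $Y$ and $Z$ into $Z$, it maps $Y$ onto $Y$. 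Hence $\iota(y)+\iota(Y)-\iota(y)=\iota(\sigma_y^{-1}(Y))=\iota(Y)$.

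\emph{The map $\iota$ need not be injective.} So $\iota(Y)$ and $\iota(Z)$ may intersect, and complement arguments such as ``$\lambda_z(y)$ cannot lie in $Z$'' are not valid inside $\iota(X)$. Carry out all set-theoretic arguments in $X$, then transport them using
\[
\lambda_{\iota(x)}(\iota(y))=\iota(\lambda_x(y))
\quad\text{and}\quad
-\iota(y)+\iota(x)+\iota(y)=\iota(\sigma_y(x)).
\]
Both identities hold in $B(X,r)$ without injectivity; the second is exactly what you derived from the defining relations.

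With these additions the proof is complete.
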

For a solution $(X,r)$ and a subsolution $(Y,r_{Y \times Y})$ one has that the sub skew brace $\left< Y \right>_+$ of $B(X,r)$ is a homomorphic image of $B(Y,r_{Y\times Y})$. Hence, \cref{decompgivesfactorization} gives a method to relate the structure skew brace on $X$ with the structure skew braces of decomposition factors.

We continue this section with some examples that will be referred to throughout the text.

\begin{exa}\label{exaoptriv}
Let $(G,\cdot)$ be a group. Then, the $\lambda$-action of $B=\op\Triv(G)$ coincides with conjugation in $G$:
\[
\lambda_a(x)=-a+a\circ x=a^{-1}\cdot (a\cdot_{\op} x)=a^{-1} \cdot x\cdot a,
\]
for every $a,x\in B$.

Moreover $a*b=\lambda_a(b)-b=a^{-1} \cdot b\cdot a\cdot b^{-1}=[a,b^{-1}]_\cdot$ for every $a,b\in B$, so
 $\op\Triv(G)^2$ coincides with the derived subgroup of $G$.
 
The $\theta$-action of $B=\op\Triv(G)$ also coincides with conjugation, as
$\theta_{(a,b)}$ is the conjugation in $G$ by $a\cdot b^{-1}$:
\[
\theta_{(a,b)}(x)=a-b+b\circ x-a=a\cdot b^{-1}\cdot (b\cdot_{\op}x)\cdot a^{-1}=a\cdot b^{-1}\cdot x\cdot b\cdot a^{-1},
\]
for every $a,b,x\in B$.

Hence, sub skew braces of $\op\Triv(G)$ are subgroups of $G$, left ideals of $\op\Triv(G)$ coincide with ideals and with normal subgroups of $G$.
\end{exa}

Recall that one can define a semidirect product of skew braces as follows, a more general version is known in \cite{Facchini}.

\begin{defn}
    Let $(A,\cdot,\cdot)$ be a trivial skew brace and $(B,+,\circ)$ a skew brace. Then, for any group morphism $\varphi: (B,\circ) \rightarrow \Aut(A,\cdot)$ one defines a skew brace
    with additive group $(A,\cdot) \times (B,+)$ and multiplicative group $(A,\cdot) \rtimes_{\varphi} (B,\circ)$, which is called the semidirect product of $A$ and $B$ and denoted $A \rtimes_{\varphi} B$.
\end{defn}

\begin{exa}\label{rosita}
Denote by $B$ the skew brace obtained as the semi-direct product of the trivial skew braces on $\mathbb{Z}_3 \times \mathbb{Q}$ and $\Z$ via the morphism \begin{align*}
    \varphi:\Z&\to \Aut(\Z/3\times\Q)\\
    k&\mapsto \varphi_k:(a,x)\mapsto \big((-1)^ka,2^kx\big).
\end{align*}
Then,  $(B,+) \cong \Z/3\times \Q\times \Z$ and $(B,\circ)$ is a group isomorphic to $(\Z_3\times \Q)\rtimes_\varphi \Z$, where
\[
(a,x,k)\circ(b,y,l) = \big(a+(-1)^kb, x+2^ky, k+l\big),
\]
for all $(a,x,k),(b,y,l)\in B$.

Moreover, 
$$\lambda_{(a,x,k)}(b,y,l)=\big((-1)^kb, 2^ky, l\big),$$
$$\overline{(b,y,l)}=((-1)^{l+1}b,-2^ly,-l).$$

\end{exa}
Finally, we construct an example of a skew brace of cyclic type where the multiplicative group is not $FC$.
\begin{exa}\label{es1}

     We can define on $\Z$ the following operation:
    \[
    n\circ m=\begin{cases}
        n+m, &\text{ if }n \text{ is even }\\
        n-m &\text{ if }n \text{ is odd }
    \end{cases}.
    \]
    It is easy to verify that $(\Z,\circ)$ is a group (isomorphic to the infinite dihedral group $D_\infty$)
    and that $(\Z,+,\circ)$ is a skew brace. We will denote this skew brace with $\CD_\infty$.

Note that the $\lambda$-map and $\theta$-map of $\CD_\infty$ are given by
\[
\theta_{(k,m)}(n)=\lambda_m(n)=\begin{cases}
        n, &\text{ if }m \text{ is even }\\
        -n &\text{ if }m \text{ is odd }
    \end{cases}
\]
\end{exa}

\section{The index of a sub skew brace}\label{secindex}

In this section, we study the index of sub skew braces. First, we show that if for a sub skew brace $A$ of $B$ both the additive and multiplicative index of $A$ in $B$ are finite, then $A$ contains a strong left ideal of finite index. This allows us to prove that if both the additive and multiplicative indices of a sub skew brace are finite, they need to coincide. This reduces the question to whether the finiteness of these indices are equivalent. We show that one implication holds for two-sided skew braces.

The relevance of left ideals in the context of determining the index of a sub skew brace is apparent in the following remark.

\begin{rem}\label{inv}
   Recall that in \cite{MR3647970} it was implicitly shown that for any left ideal $L$ and $a\in B$ the cosets determined by $a$ coincide, i.e. $ a \circ L = a + L$. Hence, for any left ideal $L$ of $B$, the index of $L$ in $B$ is well-defined. 
\end{rem}

It is a well-known and useful result in group theory that a subgroup of finite index has a normal subgroup of finite index contained in it. In the following proposition we extend this to skew braces.

\begin{pro}\label{sli2}
    Let $B$ be a skew brace and $A$ a sub skew brace such that both $|B: A|_+$ and $|B:A|_{\circ}$ are finite. Then, there exists a strong left ideal $L$ of $B$ of finite index contained in $A$.
\end{pro}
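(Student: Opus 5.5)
The plan is to build $L$ in two stages: first intersect finitely many $\lambda$-translates of $A$ to get a left ideal of finite additive index, then take its additive normal core. The key point is that $\lambda$-images of $A$ are governed by cosets of $(A,\circ)$, and additive conjugates of a left ideal are permuted by the $\lambda$-action.

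\emph{Step 1: finitely many $\lambda$-images.} For $b\in B$ and $a\in A$, since $\lambda$ is a morphism of $(B,\circ)$ and $\lambda_a(A)=A$, we get
\[
\lambda_{b\circ a}(A)=\lambda_b(\lambda_a(A))=\lambda_b(A).
\]
So the subgroup $\lambda_b(A)$ of $(B,+)$ depends only on the left coset $b\circ A$. Because $|B:A|_\circ$ is finite, there are only finitely many distinct subgroups $\lambda_{b_1}(A),\dots,\lambda_{b_n}(A)$. Each $\lambda_{b_i}$ is an automorphism of $(B,+)$, so each $\lambda_{b_i}(A)$ has additive index $|B:A|_+<\infty$. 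Hence
\[
L_0=\bigcap_{b\in B}\lambda_b(A)=\bigcap_{i=1}^n\lambda_{b_i}(A)
\]
is a subgroup of $(B,+)$ of finite index (by Poincaré's theorem) and is contained in $A=\lambda_0(A)$.

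\emph{Step 2: $L_0$ is a left ideal.} For every $c\in B$, since $\lambda$ is a morphism and $b\mapsto c\circ b$ is a bijection of $B$,
\[
\lambda_c(L_0)=\bigcap_{b}\lambda_{c\circ b}(A)=L_0.
\]

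\emph{Step 3: pass to the additive normal core.} Set $L=\bigcap_{x\in B}(x+L_0-x)$. Since $L_0$ has finite index in $(B,+)$, it has only finitely many conjugates, so $L$ is a normal subgroup of $(B,+)$ of finite index and $L\subseteq L_0\subseteq A$. Since $\lambda_c$ is an additive automorphism fixing $L_0$, we have
\[
\lambda_c(x+L_0-x)=\lambda_c(x)+L_0-\lambda_c(x).
\]
Thus $\lambda_c$ permutes the family of conjugates of $L_0$, so $\lambda_c(L)=L$ for all $c\in B$. Therefore $L$ is a strong left ideal of finite index contained in $A$. By \cref{inv} its index is well defined, in particular finite in both groups.

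The only genuinely non-routine point is Step 1: observing that $\lambda_b(A)$ depends only on the coset $b\circ A$. This is exactly where finiteness of $|B:A|_\circ$ enters, while finiteness of $|B:A|_+$ is used for the index of each $\lambda_b(A)$. The remaining steps are standard core-type arguments.
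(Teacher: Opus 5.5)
Your proof is correct and follows essentially the same two-stage construction as the paper: intersect the finitely many $\lambda$-translates of $A$, indexed by the left cosets of $(A,\circ)$, to get a left ideal of finite additive index, then take its additive normal core and check that it is $\lambda$-invariant. The only difference is cosmetic. You intersect over all $b\in B$ and all $x\in B$ rather than over transversals, which turns both invariance checks into simple reindexings and makes explicit the identity $\lambda_{b\circ a}(A)=\lambda_b(A)$ that the paper uses implicitly.
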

\begin{proof}
    First, let $T$ be a transversal of $A$ in $(B,\circ)$. Then we consider $$ L_1 = \bigcap_{t \in T} \lambda_t(A).$$ Note that $L_1$ is a finite intersection of subgroups of finite index in $(B,+)$, hence $L_1$ is of finite index in $(B,+)$. Then, we note for $b \in B$ that for every $t\in T$ there is a unique $t' \in T$ such that $b \circ t \in t' \circ A$ and vice versa. Hence, $$ \lambda_b\left( \bigcap_{t \in T} \lambda_t(A)\right) = \bigcap_{t \in T} \lambda_{b \circ t}(A) = \bigcap_{t' \in T} \lambda_{t'}(A). $$ Hence, we have shown that $L_1$ is a left ideal of finite index in the additive group. 
    However, for left ideals, additive and multiplicative cosets coincide, so $L_1$ is of finite index.

    Second, let $S$ be a transversal of $L_1$ in $(B,+)$. Consider the set $$ L = \bigcap_{s \in S} (s+L_1-s).$$ Then, $L$ is a subgroup of $(B,+)$ of finite index. Suppose that for $a \in B$ and $s,s',t \in S$ we have that $\lambda_a(s) = t+l$ for some $l \in L_1$ and $\lambda_a(s') = t+l'$ for some $l' \in L_1$. Then, $$ \lambda_a(-s+s') = -\lambda_a(s)+\lambda_{a}(s') = -l-t+t+l' = -l+l' \in L_1.$$ As $L_1$ is a left ideal, it is invariant under $\lambda_a^{-1}$. Thus, $s-s' \in L_1$, which shows that $s+L_1=s'+L_1$. Hence, for any $a \in B$ we have that $$ \lambda_a\left(\bigcap_{s \in S} (s+L_1-s)\right) = \bigcap_{s \in S} (\lambda_a(s)+L_1-\lambda_a(s)) = \bigcap_{s \in S} (s+L_1-s).$$ Hence, $L$ is a normal subgroup of finite index of $(B,+)$ and is a left ideal of $B$. As $L$ is a left ideal, \cref{inv} implies that the multiplicative index $|B:L|_{\circ}$  equals its additive index $|B:A|_+$, which shows the result.
\end{proof}

We use \cref{sli2} and the above remark to prove that if both the additive index and multiplicative index of a sub skew brace are finite, they have to coincide. 
\begin{thm}\label{finiteisnice}
    Let $(B,+,\circ)$ be a skew brace and let $(A,+,\circ)$ be a sub skew brace of $B$. Then, if $|B:A|_+$ and $|B:A|_\circ$ are finite, it holds that $$|B:A|_+=|B:A|_\circ.$$
\end{thm}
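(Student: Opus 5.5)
By \cref{sli2}, the finiteness of both $|B:A|_+$ and $|B:A|_\circ$ gives a strong left ideal $L$ of $B$ with $L\subseteq A$ and $|B:L|$ finite. By \cref{inv}, the additive and multiplicative cosets of $L$ agree: $b\circ L=b+L$ for all $b\in B$. In particular $|B:L|_+=|B:L|_\circ$, and we write $n$ for this common value. The plan is to compute both indices of $A$ through $L$, using the tower law in each group separately:
\[
|B:A|_+=\frac{|B:L|_+}{|A:L|_+},\qquad |B:A|_\circ=\frac{|B:L|_\circ}{|A:L|_\circ}.
\]
It then suffices to prove $|A:L|_+=|A:L|_\circ$. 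Both quantities are finite, since they divide $n$.

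For this, I would check that $L$ is a left ideal of the skew brace $A$. It is an additive subgroup of $A$, and $\lambda_a(L)=L$ holds for every $a\in B$, hence for every $a\in A$. Applying \cref{inv} inside $A$ gives $a\circ L=a+L$ for every $a\in A$. This can also be seen directly: $a\circ l=a+\lambda_a(l)$ and $\lambda_a(L)=L$. So the partition of $A$ into left cosets of $L$ is the same for both operations, and the number of cosets is therefore $|A:L|_+=|A:L|_\circ$.

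Combining these, $|B:A|_+=n/|A:L|_+=n/|A:L|_\circ=|B:A|_\circ$. Both are honest finite integer indices, so the tower law applies in each group.

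The only real obstacle is producing the finite-index strong left ideal inside $A$, and that is exactly \cref{sli2}. We need only that $L$ is a left ideal, not that it is normal. The rest is bookkeeping: the left cosets of a left ideal coincide for the two operations, both in $B$ and in $A$.
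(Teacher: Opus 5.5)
Your proof is correct and follows the paper's argument: take the finite-index strong left ideal $L\subseteq A$ from \cref{sli2}, apply \cref{inv} in both $B$ and $A$ to get $|B:L|_+=|B:L|_\circ$ and $|A:L|_+=|A:L|_\circ$, and cancel using the tower law. You are also right that only the left-ideal property of $L$ is needed; the left ideal $L_1$ built in the first step of the proof of \cref{sli2} would already suffice.
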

\begin{proof}
    By \cref{sli2}, there exists a strong left ideal $L$ of $B$ of finite index contained in $A$. Hence, it is also a strong left ideal of $A$. Hence, by \cref{inv} it follows that $$ |B:L|_+= |B:L|_{\circ} \text{ and } |A:L|_+ = |A:L|_{\circ}.$$ Hence, combining these equalities with the tower formula, one obtains that $$ |B:A|_\circ|A:L|_{\circ}=|B:L|_{\circ}=|B:L|_+=|B:A|_+|A:L|_+=|B:A|_+|A:L|_{\circ}.$$ As all indices involved are finite, it follows that  $|B:A|_+=|B:A|_\circ$.
\end{proof}

\begin{rem}
    \cref{finiteisnice} allows us to reformulate the definition of index for a sub skew brace given in \cite{index} in the following way. Given a skew brace $B$ and a sub skew brace $C$ of $B$ we say that $C$ has \textit{finite index} $B$ if there exists a positive integer $n$ such that $n=|B:C|_+=|B:C|_\circ$; we define the \textit{index} $|B:C|$ of $C$ in $B$ as $n$. If $C$ does not have finite index, we say that $C$ has \textit{infinite index}.
\end{rem}
For two-sided skew braces \cref{sli2} can be improved to provide an ideal of finite index.
\begin{cor}
    Let $B$ be a two-sided skew brace and $A$ a sub skew brace of finite index. Then there exists an ideal $I$ of $B$ of finite index contained in $A$.
\end{cor}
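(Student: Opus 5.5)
Start from \cref{sli2}: since $A$ has finite index (both indices finite and equal by \cref{finiteisnice}), there is a strong left ideal $L$ of $B$ of finite index with $L\subseteq A$. We then make $L$ normal in $(B,\circ)$ without losing the other properties or finite index. By \cref{mult conj in two-sided}, in a two-sided skew brace the multiplicative conjugation $c_b\colon x\mapsto b\circ x\circ\overline{b}$ is a skew brace automorphism of $B$. Hence each $c_b(L)$ is again a strong left ideal of $B$: it is a normal subgroup of $(B,+)$, and for $a\in B$ we get $\lambda_a(c_b(L))=c_b(\lambda_{c_b^{-1}(a)}(L))=c_b(L)$, since automorphisms commute with the $\lambda$-map in this way. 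Moreover $c_b(L)$ has the same finite index as $L$.

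Set $I=\bigcap_{b\in B} c_b(L)$, the multiplicative core of $L$. Because $L$ is in particular a subgroup of finite index of $(B,\circ)$ (by \cref{inv}), its normalizer in $(B,\circ)$ has finite index. So only finitely many distinct conjugates $c_b(L)$ occur, indexed by a transversal $T$ of the normalizer of $L$ in $(B,\circ)$, and $I=\bigcap_{t\in T}c_t(L)$.

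It remains to check the three required properties of $I$.
\begin{itemize}
\item \emph{Strong left ideal.} An intersection of strong left ideals is a strong left ideal, since normality in $(B,+)$ and $\lambda$-invariance are both preserved under intersections.
\item \emph{Finite index.} Each $c_t(L)$ has finite index in $(B,+)$, and a finite intersection of finite-index subgroups has finite index. Since $I$ is a left ideal, \cref{inv} shows its additive and multiplicative indices agree.
\item \emph{Ideal.} $I$ is normal in $(B,\circ)$ because conjugation by any $b$ permutes the family $\{c_{b'}(L)\}_{b'\in B}$. So $I$ is an ideal of $B$, and $I\subseteq L\subseteq A$.
\end{itemize}

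The main point needing care is that $c_b(L)$ is again a strong left ideal, which rests entirely on \cref{mult conj in two-sided}. Everything else is the standard core argument from group theory.
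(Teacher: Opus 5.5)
Your proof is correct and follows the paper's argument: \cref{sli2} gives a finite-index strong left ideal $L\subseteq A$, \cref{mult conj in two-sided} makes each multiplicative conjugate of $L$ again a strong left ideal of finite index, and the multiplicative core of $L$ is the required ideal. The differences are cosmetic: you index the finitely many conjugates by a transversal of the normalizer of $(L,\circ)$ rather than of $(L,\circ)$ itself, and you verify the $\lambda$-invariance of $c_b(L)$ explicitly, which the paper leaves implicit.
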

\begin{proof}
    First, by \cref{sli2}, there is a strong left ideal $L$ of $B$ of finite index contained in $A$. Consider $T$ a transversal of $(L,\circ)$ in $(B,\circ)$. Denote $$ I= \bigcap_{t\in T} \overline{t} \circ  L \circ  t.$$ Then, we claim that $I$ is the required ideal of $B$. Indeed, \cref{mult conj in two-sided} shows that multiplicative conjugation in a two-sided skew brace is a skew brace automorphism. Hence, $\overline{t}\circ L\circ t$ is a strong left ideal for all $t\in T$ of finite index. Hence, $I$ is a strong left ideal of $B$ of finite index. Moreover, by construction $(I,\circ)$ is a normal subgroup of $(B,\circ)$, which shows the result.
\end{proof}

Note that the previous proof strategy relies on the fact that multiplicative conjugation is an additive automorphism, which does not hold in general skew braces. Hence, the following question remains open.

\begin{question}
    Let $B$ be a skew brace and $A$ a sub skew brace of finite index. Does $A$ contain an ideal of $B$ of finite index?
\end{question}

Secondly, \cref{finiteisnice} reduces the question whether the index of a sub skew brace is well-defined to the following.

\begin{question}\label{equivalencefiniteindex}
    Let $B$ be a skew brace and $A$ a sub skew brace. Does it hold that $|B:A|_+<\infty$ is equivalent with $|B:A|_\circ< \infty$?
\end{question}

In the following proposition we show that two-sided skew braces of abelian type,
i.e. radical rings, the finiteness of the additive index implies the finiteness of the multiplicative index.

 \begin{pro}\label{atts}
    Let $(B,+,\circ)$ be a two-sided skew brace of abelian type and $A$ be a sub skew brace of $B$ such that $|B:A|_+<\infty$. Then $|B:A|_\circ=|B:A|_+$.
\end{pro}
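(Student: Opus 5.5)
The plan is to reduce everything to \cref{finiteisnice}. By the characterization of two-sided skew braces of abelian type, $(B,+,*)$ is a radical ring, and ideals of the skew brace coincide with two-sided ideals of this ring. If I can produce an ideal $I$ of $B$ with $I\subseteq A$ and $|B:I|_+<\infty$, the proof finishes quickly. By \cref{inv}, $|B:I|_\circ=|B:I|_+$ is finite. Since $I\subseteq A$, also $|B:A|_\circ\le |B:I|_\circ<\infty$. Then \cref{finiteisnice} gives $|B:A|_\circ=|B:A|_+$. So the real content is the ring-theoretic fact that a subring of finite additive index in a (non-unital) ring contains a two-sided ideal of finite additive index.

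First I check that $A$ is a subring of $(B,+,*)$. It is an additive subgroup, and for $a,x\in A$ we have $a*x=\lambda_a(x)-x\in A$, since $\lambda_a(A)=A$. I write the ring product as juxtaposition, $bx:=b*x$.

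Next I build a left ideal. For $x\in A$, the map $B/A\to B/A$, $b+A\mapsto bx+A$, is well defined because $Ax\subseteq A$, and it is additive. The assignment sending $x$ to this map is an additive homomorphism from $A$ to $\End(B/A)$, which is finite since $B/A$ is finite. Its kernel $L=\{x\in A: Bx\subseteq A\}$ therefore has finite index in $A$, hence in $B$. Associativity shows $L$ is a left ideal: if $x\in L$ and $b,c\in B$, then $c(bx)=(cb)x\in A$, so $bx\in L$.

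Then I repeat the construction on the right, relative to $L$. For $x\in L$, the map $B/L\to B/L$, $b+L\mapsto xb+L$, is well defined because $xL\subseteq L$ (as $L$ is a left ideal). As before, the kernel $I=\{x\in L: xB\subseteq L\}$ has finite index in $L$. To see that $I$ is a two-sided ideal, take $x\in I$ and $b,c\in B$. Then $bx\in L$ and $(bx)c=b(xc)\in L$, so $bx\in I$. Also $xb\in L$ and $(xb)c=x(bc)\in L$, so $xb\in I$. Thus $I\subseteq A$ is an ideal of the radical ring of finite additive index, hence an ideal of the skew brace, and the first paragraph completes the proof.

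The main point needing care is the correspondence between ring ideals and skew brace ideals, together with the well-definedness of the two induced endomorphisms. The correspondence is quoted from the earlier proposition. The well-definedness is exactly where the subring property of $A$ and the left-ideal property of $L$ are used.
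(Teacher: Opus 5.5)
Your proof is correct and follows the paper's idea: the kernel of an additive map $A\to\End(B/A)$ is a one-sided ideal of finite index inside $A$, and \cref{inv} together with \cref{finiteisnice} then finishes. Your first kernel $L=\{x\in A: B*x\subseteq A\}$ is already a left ideal of the skew brace, so your second step producing a two-sided ideal is unnecessary. Your use of right multiplication is also the better choice: the paper uses left multiplication, and its kernel $L'=\{a\in A: a*B\subseteq A\}$ is in general a right ideal of the ring, not a left ideal of the skew brace as the paper asserts. The paper's conclusion still holds, since $L'\circ b=L'+b$ for right ideals of a radical ring, but this needs an extra check that your version avoids.
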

\begin{proof}
    Consider the surjective action $$\varphi:A\to \End\frac{(B,+)}{(A,+)}$$ defined as $\varphi(a)=l_a$, where $l_a$ is the left multiplication by $a$. As $(B,+)/(A,+)$ is a finite abelian group, it follows that $\End(B,+)/(A,+)$ is finite. Observe that the left ideal of $B$ given by $$L=\{a\in A \ | a*b\in A, \text{ for any } \ b\in B\}$$ is $\ker \varphi$; thus $(A/L,+)$ is finite. Hence, from the tower formula, $L$ is a left ideal of $B$ of finite index, which shows that $|B:A|_\circ<\infty$ and the result follows from \cref{finiteisnice}.   
\end{proof}

By \cite{Trappeniers} a two-sided skew brace is an extension of a two-sided skew brace of abelian type
by a weakly trivial skew brace. To settle \cref{equivalencefiniteindex} for two-sided skew braces, we show that it holds for weakly trivial skew braces.

\begin{pro}\label{indexweaklytriv}
    Let $B$ be a weakly trivial skew brace and $A$ a sub skew brace. Then, if either the additive or multiplicative index of $A$ in $B$ is finite, then both are finite and equal.
\end{pro}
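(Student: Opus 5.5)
The plan is to write $B$ as an extension of an almost trivial skew brace by a trivial one, check the index statement on each of these two pieces, and then glue them with \cref{indexdedekind}.

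By definition, $B$ is a sub skew brace of $\Triv(G)\times T$, where $\Triv(G)$ is the trivial skew brace on a group $(G,\cdot)$ and $T$ is an almost trivial skew brace. Write the additive group of $T$ as $(H,\cdot)$. For $(g,h),(g',h')\in B$ we then have
\[
(g,h)+(g',h')=(gg',hh'), \qquad (g,h)\circ(g',h')=(gg',h'h).
\]
Let $\pi\colon B\to G$ be the projection onto the first coordinate. In both operations the first coordinate is just multiplication in $G$. So $\pi$ is a skew brace morphism from $B$ onto its image $\pi(B)$. This image is a sub skew brace of $\Triv(G)$, hence a trivial skew brace.

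Set $I=\ker\pi=B\cap(\{1\}\times H)$. Being a kernel, $I$ is an ideal of $B$, and $B/I\cong\pi(B)$. The skew brace $I$ is a sub skew brace of the almost trivial brace $\{1\}\times T$, so $I$ is itself almost trivial.

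Next I check the hypotheses of \cref{indexdedekind}, taking equalities of indices as cardinals.
\begin{itemize}
\item For a sub skew brace $C$ of the trivial skew brace $B/I$, the two group structures coincide. Hence trivially $|B/I:C|_+=|B/I:C|_\circ$.
\item For a sub skew brace $D$ of the almost trivial skew brace $I$, we have $(I,\circ)=(I,+)^{\op}$. The left cosets of $D$ in $(I,\circ)$ are exactly the right cosets of $D$ in $(I,+)$. Inversion $x\mapsto -x$ gives a bijection between right and left additive cosets. Therefore $|I:D|_\circ=|I:D|_+$, whether finite or infinite.
\end{itemize}

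Applying \cref{indexdedekind} then gives $|B:A|_+=|B:A|_\circ$ for every sub skew brace $A$ of $B$. In particular, if either index is finite, both are finite and they are equal.

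The main point to check is whether \cref{indexdedekind} is really meant for arbitrary cardinals, or only asserts equality once both indices are known to be finite. If it only handles the finite case, I would reprove the extension step directly using the tower argument through $A\cap I$ and $(A+I)/I$. First, $\pi(A)\cong (A+I)/I$ has the same additive and multiplicative index in $\pi(B)$, because $\pi(B)$ is trivial. Second, $A\cap I$ has matching indices in $I$, by the almost trivial case above. The multiplicative cosets of $A$ in $B$ are then counted fibrewise over $\pi$, exactly as the additive cosets are. This yields $|B:A|_\circ=|B:A|_+$ without assuming finiteness in advance.
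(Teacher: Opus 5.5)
Your proof is correct, but it takes a different route from the paper's.

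\textbf{The paper's route.} It works through the almost trivial factor. It embeds $B$ into $B/B^2\times B/B^2_{op}$ and assumes $|B:A|_+<\infty$. It then chooses a finite-index normal subgroup $L$ of the almost trivial quotient inside $\pi_2(A)$ and puts $T=A\cap(B/B^2\times L)$. Using $(a,b)+(s,t)=(a,b)\circ(s,\overline{b}\circ t\circ b)$, it claims that the additive and multiplicative cosets of $T$ coincide. This gives $|B:A|_\circ<\infty$, and \cref{finiteisnice} yields the equality.

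\textbf{Your route.} You exhibit $B$ as an extension of the almost trivial ideal $I=\ker\pi$ by the trivial brace $\pi(B)$. You check that indices agree as cardinals on both layers and glue with \cref{indexdedekind}.

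\textbf{What your route buys.}
\begin{enumerate}
\item You get $|B:A|_+=|B:A|_\circ$ as cardinals for every sub skew brace $A$. So the case where only the multiplicative index is assumed finite is covered at once; the paper leaves that case implicit.
\item Neither \cref{sli2} nor \cref{finiteisnice} is needed.
\item It avoids a delicate point in the paper's argument. In any skew brace $x+T=x\circ\lambda_x^{-1}(T)$, so the identity $(a,b)+T=(a,b)\circ T$ for all $(a,b)\in B$ is equivalent to $T$ being a left ideal. The construction does not ensure this for an arbitrary admissible $L$. For instance, take $B=\Triv(S_3)\times\op\Triv(S_3)$, $A=\{(s,s)\mid s\in A_3\}$ and $L=\pi_2(A)$, which is normal. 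Then $T=A$, and $(1,(12))+T\neq(1,(12))\circ T$ because $(12)$ does not centralize $A_3$.
\end{enumerate}

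\textbf{Your caveat about \cref{indexdedekind}.} It is harmless. You verified its hypotheses in their strongest, cardinal form, so the reading of that proposition does not matter. Your fibrewise fallback is also a complete argument on its own. Since $I$ is an ideal, $A+I=A\circ I$, and both indices factor as $|\pi(B):\pi(A)|\cdot|I:A\cap I|$, computed in the respective groups. The two factors agree because $\pi(B)$ is trivial and $I$ is almost trivial.
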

\begin{proof}
    Recall that a weakly trivial skew brace is a subdirect product of a trivial skew brace and an almost trivial skew brace. In particular, $B$ is a sub skew brace of $(B/B^2,\circ,\circ) \times (B/B^2_{op},\circ^{op},\circ)$. Denote $\pi_2$ the canonical epimorphism onto $B/B^2_{op}$. Let $A$ be a sub skew brace of $B$. Suppose that $|B:A|_+<\infty$. Then, $\pi(A)$ is of finite index in $B/B^2$. Hence, it contains an ideal $L$ of finite index. Consider $T=A \cap (B/B^2 \times L)$. Clearly, $|B:T|_+<\infty$. Note that for $(a,b) \in B$ and $(s,t)\in T$ one  has that $$ (a,b)+(s,t) = (a\circ s,b\circ^{op}t) = (a\circ s, t\circ b) = (a,b)\circ(s,\overline{b}\circ t \circ b). $$ Hence, $(a,b)+T=(a,b)\circ T$, which shows the result. 
\end{proof}

Through a theorem obtained by Trappeniers in \cite{Trappeniers}, weakly trivial skew braces and two-sided skew braces are intimately connected, which allows to prove the following.

\begin{cor}
    Let $(B,+,\circ)$ be a two-sided skew brace and let $A$ be a sub skew brace of $B$ such that $|B:A|_+<\infty$. Then $|B:A|_\circ =|B:A|_+$.
\end{cor}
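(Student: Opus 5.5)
The plan is to combine Trappeniers' structure theorem from \cite{Trappeniers} with the extension result \cref{indexdedekind}, arguing one sub skew brace at a time. By \cite{Trappeniers}, the two-sided skew brace $B$ has an ideal $I$ such that $I$ is a two-sided skew brace of abelian type and $B/I$ is weakly trivial. (If the theorem is stated the other way round, with the weakly trivial part as the ideal, the argument below is symmetric.)

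\cref{indexdedekind} quantifies over all sub skew braces and assumes equality of indices, including possibly infinite ones. That is stronger than what \cref{atts} gives, so rather than invoke \cref{indexdedekind} directly, I would redo its argument for the given $A$ with $|B:A|_+<\infty$. The key steps are as follows.

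\begin{enumerate}
\item Set $D=A\cap I$. This is a sub skew brace of $I$, and $|I:D|_+\le |B:A|_+<\infty$, since $I/(A\cap I)$ embeds into $B/A$ as additive coset spaces. Because $I$ is two-sided of abelian type, \cref{atts} gives $|I:D|_\circ=|I:D|_+<\infty$.
\item Set $C=(A+I)/I$, the image of $A$ in $B/I$. This is a sub skew brace of $B/I$ whose additive index divides into $|B:A|_+$, so it is finite. By \cref{indexweaklytriv}, $|B/I:C|_\circ=|B/I:C|_+$.
\item Since $I$ is an ideal, $A+I=A\circ I$, and this is a sub skew brace. The tower formula then gives
\[
|B:A|_\circ=|B:A\circ I|_\circ\,|A\circ I:A|_\circ=|B/I:C|_\circ\,|I:D|_\circ,
\]
using $|A\circ I:A|_\circ=|I:A\cap I|_\circ$ (second isomorphism theorem in $(B,\circ)$, valid since $I$ is normal there). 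The additive side factors the same way, using $|A+I:A|_+=|I:D|_+$ since $I$ is normal in $(B,+)$.
\item The two products agree factor by factor, so $|B:A|_\circ$ is finite and equals $|B:A|_+$. Alternatively, once finiteness of $|B:A|_\circ$ is established, equality follows from \cref{finiteisnice}.
\end{enumerate}

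The main obstacle I expect is fixing the precise form of Trappeniers' theorem: which piece is the ideal, and that this ideal is itself two-sided, which is needed to apply \cref{atts}. Sub skew braces and quotients of two-sided skew braces are again two-sided, so $I$ should inherit two-sidedness.

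If instead the ideal is the weakly trivial part and the quotient is of abelian type, the same two steps apply with the roles of \cref{atts} and \cref{indexweaklytriv} swapped. The quotient $B/I$ is still two-sided, so \cref{atts} applies to it.
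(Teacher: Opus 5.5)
Your proof is correct and is essentially the paper's: it uses the same decomposition from \cite{Trappeniers} ($I=B^2\cap B^2_{op}$ two-sided of abelian type, $B/I$ weakly trivial), with \cref{atts} applied to $A\cap I\le I$ and \cref{indexweaklytriv} applied to $(A+I)/I\le B/I$. The only difference is that you replace the citation of \cref{indexdedekind} by an explicit tower-formula computation for this particular $A$, and this is the more careful choice: \cref{atts} only covers sub skew braces of finite additive index, so it does not literally verify the hypothesis of \cref{indexdedekind} for all sub skew braces of $I$.
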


\begin{proof}
    By \cite{Trappeniers} the ideal $I=B^2\cap B^2_{op}$ is a two-sided skew brace of abelian type and $B/I$ is weakly trivial. Hence, by \cref{indexdedekind},  \cref{atts} and \cref{indexweaklytriv} the result follows. 
\end{proof}

We shift focus from proving that the index is well-defined in certain classes of skew braces to proving that some particular sub skew braces always have well-defined index. 
 \begin{pro}\label{inker}
      Let $B$ be a skew brace and $x,y\in B$. Let $A \subseteq \ker \lambda $ be a sub skew brace of $B$. Then, $ A + x =A \circ x$. In particular, $ |B:A|_+ =|B:A|_{\circ} $.
 \end{pro}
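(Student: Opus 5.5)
We verify directly that the right additive coset $A+x$ and the right multiplicative coset $A\circ x$ coincide, and then read off the equality of indices from the fact that they partition $B$ identically.

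The key identity is $a\circ b = a+\lambda_a(b)$. For $a\in A\subseteq\ker\lambda$ we have $\lambda_a=\id$, so $a\circ x = a+\lambda_a(x) = a+x$ for every $x\in B$. Hence $A\circ x = \{a\circ x\mid a\in A\} = \{a+x\mid a\in A\} = A+x$, elementwise and with the same parametrization by $A$. Note that only $A\subseteq\ker\lambda$ is used here; the sub skew brace hypothesis is needed only to ensure that $A$ is a subgroup of both $(B,+)$ and $(B,\circ)$, so that these sets really are cosets.

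For the index statement, the right cosets of $(A,+)$ in $(B,+)$ are exactly the sets $A+x$, and the right cosets of $(A,\circ)$ in $(B,\circ)$ are exactly the sets $A\circ x$, with $x$ ranging over $B$. By the previous step these two families of subsets of $B$ coincide, so the two partitions of $B$ are the same. The number of right cosets equals the number of left cosets in any group, so this common cardinality equals both $|B:A|_+$ and $|B:A|_\circ$, finite or infinite.

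There is no genuine obstacle. The only point to watch is using right cosets, since it is $a\circ x$ with $a\in A$ that simplifies. Left cosets $x\circ A$ would give $x+\lambda_x(A)$, which need not equal $x+A$. The passage from right cosets to the index is the standard bijection $Hg\mapsto g^{-1}H$.
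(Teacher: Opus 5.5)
Your proposal is correct and follows the paper's proof: both use $a\circ x = a+\lambda_a(x) = a+x$ for $a\in A\subseteq\ker\lambda$ to get $A\circ x = A+x$, and then read off the equality of indices. Your remarks that right cosets are the relevant ones, and that the right-coset count equals the index, make explicit what the paper leaves implicit.
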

 \begin{proof}
For any $a \in A$ one has that $\lambda_a=\id_B$. Hence, $$ a \circ x = a + \lambda_a(x) = a + x.$$
Thus, it follows that $A \circ x = A + x$. The result now follows.
 \end{proof}
 
We conclude this section with a first indication that the finiteness of the orbits under the $\lambda$-action are crucial to the study of skew braces of infinite order. It is easy to prove that the \cref{sli2} still holds if $(A,+,\circ)$ is an \textit{almost} $\lambda$-\textit{invariant} sub skew brace of B and $|B:A|_+$ is finite, where $A$ is called \textit{almost} $\lambda$-\textit{invariant} if $|B:Stab_\lambda(A)|_\circ$ is finite, or, equivalently, if the set $$\{\lambda_b(A) \ | \ b\in B\}$$ is finite. 

\begin{cor}
Let $(A,+,\circ)$ be an \textit{almost} $\lambda$-\textit{invariant} sub skew brace of a skew brace $(B,+,\circ)$. Then, if $|B:A|_+$ is finite, it holds that $$|B:A|_+=|B:A|_\circ.$$
\end{cor}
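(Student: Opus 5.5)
The plan is to mimic the proof of \cref{sli2}, replacing the finiteness of $|B:A|_\circ$ (used there only to make the intersection defining $L_1$ finite) by the almost $\lambda$-invariance of $A$. Then \cref{inv} and the tower-formula argument of \cref{finiteisnice} give equality of the indices.

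First I build a left ideal of finite index inside $A$. Since $\{\lambda_b(A)\mid b\in B\}$ is finite, set
\[
L_1=\bigcap_{b\in B}\lambda_b(A)=\bigcap_{i=1}^n \lambda_{b_i}(A).
\]
This is a finite intersection. Each $\lambda_{b_i}$ is an automorphism of $(B,+)$, so each $\lambda_{b_i}(A)$ is an additive subgroup with $|B:\lambda_{b_i}(A)|_+=|B:A|_+<\infty$. Hence $|B:L_1|_+<\infty$. Taking $b=0$ shows $L_1\subseteq A$. For any $c\in B$, since $\lambda$ is a morphism from $(B,\circ)$, we get $\lambda_c(L_1)=\bigcap_{b}\lambda_{c\circ b}(A)=L_1$, because $b\mapsto c\circ b$ is a bijection of $B$. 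So $L_1$ is a left ideal of $B$.

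If a strong left ideal is wanted, the second step of \cref{sli2} applies verbatim: intersect the finitely many additive conjugates $s+L_1-s$. This step is not actually needed for the index statement.

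Next I apply \cref{inv}. For the left ideal $L_1$, additive and multiplicative cosets coincide in $B$, so $|B:L_1|_\circ=|B:L_1|_+<\infty$. Now $L_1\subseteq A$, and $L_1$ is also a left ideal of $A$, since it is $\lambda_a$-invariant for all $a\in A$. So \cref{inv} applied inside $A$ gives $|A:L_1|_\circ=|A:L_1|_+$. This common value is finite, because $|A:L_1|_+\le|B:L_1|_+$.

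Finally I conclude with the tower formula. From $|B:L_1|_\circ<\infty$ we get $|B:A|_\circ<\infty$. Then both indices of $A$ are finite and \cref{finiteisnice} applies directly; equivalently, compare $|B:A|_\circ|A:L_1|_\circ=|B:L_1|_\circ=|B:L_1|_+=|B:A|_+|A:L_1|_+$ and cancel the finite nonzero factor $|A:L_1|_\circ=|A:L_1|_+$.

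The only step needing care is the first one: verifying that $L_1$ is a genuine left ideal of finite additive index using just the finiteness of the orbit $\{\lambda_b(A)\}$. I would also check the stated equivalence with $|B:\Stab_\lambda(A)|_\circ<\infty$, via orbit–stabilizer for the action $b\cdot X=\lambda_b(X)$ of $(B,\circ)$ on additive subgroups. The remaining steps are routine.
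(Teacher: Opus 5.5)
Your proposal is correct and follows the paper's own route. The paper observes that the construction in \cref{sli2} still works once the finite transversal of $A$ in $(B,\circ)$ is replaced by the finitely many subgroups $\lambda_b(A)$; this yields a left ideal of finite index inside $A$, and the conclusion then follows from \cref{inv} and the tower-formula argument of \cref{finiteisnice}. You are also right that the normality step can be dropped here, since \cref{inv} only needs a left ideal.
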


\begin{rem}
Note that, if $(A,+,\circ)$ is a sub skew brace of $(B,+,\circ)$ such that $|B:A|_\circ<\infty$, since $A\subseteq \Stab_\lambda(A)$, then $A$ is almost $\lambda$-invariant, while the converse is in general not true. However from the previous results, it follows that if $(A,+,\circ)$ is a sub skew brace of $(B,+,\circ)$ such that $|B:A|_+<\infty$ then it is equivalent to say that $|B:A|_\circ<\infty$ and that A is almost $\lambda$-invariant.
\end{rem}

\section{Skew braces with finite \texorpdfstring{$\lambda$}{lambda}-classes}\label{seclambda}

In this section, we examine the skew braces where the action of the group $\left<\lambda_x\mid x \in B \right>$ has finite orbits. As $\lambda$ intuitively corresponds to a conjugation, this is akin to the concept of finite conjugacy in groups.  First, we show some structural results, indicating that for this class $\ker\lambda$ plays an analogous role to the group theoretic center. Second, we show that these skew braces are finitely generated if and only if their additive and multiplicative groups are finitely generated. Lastly, we link this notion to almost $\lambda$-invariant sub skew braces.

\begin{defn}
    Let $B$ be a skew brace. An element $x\in B$ is a \emph{$\lambda_f$-element} if it has a finite $\lambda$-orbit, or equivalently, if $|B:\Stab_\lambda(x)|_\circ$ is finite. Moreover we set
\[
\lambda_f(B)=\{b\in B\mid b \text{ is a }\lambda_f\text{-element}\}.
\]
If $\lambda_f(B)=B$, then we say that $B$ is a $\lambda_f$-skew brace.
\end{defn}

First, we show that the set of $\lambda_f$-elements forms a left ideal. 

\begin{pro}\label{sli}
   Let $B$ be a skew brace. Then, $\lambda_f(B)$ is a left ideal. Furthermore, if $C$ is a skew brace and $\varphi:B\rightarrow  C$ an epimorphism, then $\varphi(\lambda_f(B))\subseteq \lambda_f(C)$.
\end{pro}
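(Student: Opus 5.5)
We have to show three things: $\lambda_f(B)$ is an additive subgroup, it is $\lambda$-invariant, and $\lambda_f$-elements map to $\lambda_f$-elements under epimorphisms. The whole argument rests on the fact that each $\lambda_a$ is an automorphism of $(B,+)$, combined with the description of $\lambda_f$-elements through stabilizers of finite index in $(B,\circ)$.

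\emph{Subgroup.} Clearly $0\in\lambda_f(B)$, since $\lambda_a(0)=0$ for all $a$. Let $x,y\in\lambda_f(B)$. For every $a\in B$ we have $\lambda_a(x-y)=\lambda_a(x)-\lambda_a(y)$, so
\[
[x-y]_\lambda\subseteq\{u-v\mid u\in[x]_\lambda,\ v\in[y]_\lambda\}.
\]
The right-hand side has at most $|[x]_\lambda|\,|[y]_\lambda|$ elements, so $x-y\in\lambda_f(B)$. The same conclusion also follows from $\Stab_\lambda(x)\cap\Stab_\lambda(y)\subseteq\Stab_\lambda(x-y)$, because an intersection of two subgroups of finite index in $(B,\circ)$ again has finite index.

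\emph{$\lambda$-invariance.} Let $x\in\lambda_f(B)$ and $b\in B$. Since $\lambda$ is a homomorphism from $(B,\circ)$, we have $\lambda_a(\lambda_b(x))=\lambda_{a\circ b}(x)$, and hence $[\lambda_b(x)]_\lambda=[x]_\lambda$, which is finite. Thus $\lambda_b(\lambda_f(B))\subseteq\lambda_f(B)$. Applying this to $\overline{b}$ as well gives equality, so $\lambda_f(B)$ is a left ideal.

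\emph{Epimorphisms.} Because $\varphi$ is a homomorphism of both group structures, it satisfies $\varphi(\lambda_a(x))=\lambda_{\varphi(a)}(\varphi(x))$. Let $x\in\lambda_f(B)$. Since $\varphi$ is surjective, every $c\in C$ can be written as $c=\varphi(a)$ for some $a\in B$, and so
\[
[\varphi(x)]_\lambda=\varphi([x]_\lambda),
\]
which is finite. Therefore $\varphi(x)\in\lambda_f(C)$.

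Every step here is routine. The only point that needs care is the surjectivity of $\varphi$ in the last part: without it, the elements of $C$ outside the image of $\varphi$ could produce infinitely many $\lambda$-images of $\varphi(x)$.
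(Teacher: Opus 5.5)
Your proof is correct and follows essentially the same route as the paper: closure under differences via $\lambda_c(a-b)=\lambda_c(a)-\lambda_c(b)$, $\lambda$-invariance via $[\lambda_d(x)]_\lambda=\{\lambda_{c\circ d}(x)\mid c\in B\}$, and finiteness of $[\varphi(x)]_\lambda=\varphi([x]_\lambda)$ using surjectivity. Your extra remarks (the stabilizer-intersection alternative, and applying invariance to $\overline{b}$ to get the equality $\lambda_b(\lambda_f(B))=\lambda_f(B)$) are valid small refinements that the paper leaves implicit.
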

\begin{proof}
     
We first prove that $\lambda_f(B)$ is a subgroup of $(B,+)$. Observe that, since $[1]_{\lambda}=\{1\}$, $1$ is a $\lambda_f\text{-element}$. Moreover, if we consider $a,b\in\lambda_f(B)$ and $c\in B$, then $$\lambda_c(a-b)=\lambda_c(a)-\lambda_c(b)$$ and since we have finitely many possibilities for $\lambda_c(a)$ and $\lambda_c(b)$, it follows that $a-b\in\lambda_f(B)$. In order to prove that $\lambda_f(B)$ is $\lambda$-invariant, let us consider $d\in B$ and $x\in\lambda_f(B)$. Then we have that the set $$[\lambda_d(x)]_{\lambda}=\{\lambda_c(\lambda_d(x)) \ | \ c\in B\}=\{\lambda_{c\circ d}(x) \ | \ c\in B\}$$ is finite. 

Let $\varphi:B\rightarrow C$ be an epimorphism. Let $x \in \lambda_f(B)$. Then, for any $ c \in C$ there exists a $b \in B$ such that $\varphi(b)=c$. Then,  $$ \lambda_c(\varphi(x))=\lambda_{\varphi(b)}(\varphi(x))=\varphi(\lambda_b(x)).$$ Hence, as $[x]_{\lambda}$ is a finite set, it follows that $[\varphi(x)]_{\lambda}$ is a finite set, which shows the result.
\end{proof}

A non-trivial example of a $\lambda_f$-skew brace is the following.
\begin{exa}\label{exafree}
    Let $(F,+)$ be a free group on $\left\lbrace a,b \right\rbrace$.
    Denote $\lambda:F\rightarrow F$ the group automorphism induced by
    $\lambda(a)=b$ and $\lambda(b)=a$ 
    and $\varepsilon: F \rightarrow \mathbb{Z}$ the group morphism induced by 
    $\varepsilon(a)=1=\varepsilon(b)$.
    Then we define the operation $\circ$ in the following way:
    \[
    u\circ v=u+\lambda^{\varepsilon(u)}(v),
    \]
    for every $u,v\in F$.
    It is easy to check that with this definition, $(F,+,\circ)$ is a skew brace.
    
    Moreover, since $\lambda$ has order 2, for every $w\in F$,
    \[
    \lambda_w=\begin{cases}
        \id&\text{ if }\varepsilon(w)\text{ is even}\\
        \lambda&\text{ if }\varepsilon(w)\text{ is odd}
    \end{cases}.
    \]
    So the $\lambda$-orbit of an arbitrary element has at most $2$ elements.
\end{exa}

In the context of $FC$-groups, among the most important examples we find groups in which the center has finite index; the converse holds for finitely generated $FC$-groups \cite{neumann}. In the framework of skew  braces, we note that $\ker\lambda$ plays the role of the center in these results.

\begin{pro}\label{lambdafkerfinite}
    Let $(B,+,\circ)$ be a skew brace such that $|B:\ker\lambda|$ is finite. Then, each element of $B$ is a $\lambda_f$-element.
\end{pro}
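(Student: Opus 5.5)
The idea is that $\lambda_x$ depends only on the multiplicative coset of $\ker\lambda$ containing $x$, so there are only finitely many distinct maps $\lambda_x$.

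First, $\ker\lambda$ is the kernel of the group morphism $\lambda\colon(B,\circ)\to\Aut(B,+)$. Hence it is a normal subgroup of $(B,\circ)$. It is also a sub skew brace: for $a\in\ker\lambda$ we have $\lambda_a=\id$, so $\lambda_a(\ker\lambda)=\ker\lambda$. By \cref{inker}, its additive and multiplicative indices coincide. So the hypothesis $|B:\ker\lambda|<\infty$ gives that $|B:\ker\lambda|_\circ$ is finite, whichever index is meant.

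Next, fix a transversal $T$ of $\ker\lambda$ in $(B,\circ)$; it is finite. Any $b\in B$ can be written as $b=t\circ k$ with $t\in T$ and $k\in\ker\lambda$. Since $\lambda$ is a morphism on $(B,\circ)$,
\[
\lambda_b=\lambda_t\lambda_k=\lambda_t .
\]
Therefore, for every $x\in B$,
\[
[x]_\lambda=\{\lambda_t(x)\colon t\in T\},
\]
which has at most $|T|$ elements. So every $x$ is a $\lambda_f$-element.

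Equivalently, $\ker\lambda\subseteq\Stab_\lambda(x)$ for every $x$, so $|B:\Stab_\lambda(x)|_\circ\le|B:\ker\lambda|_\circ<\infty$. There is no real obstacle here. The only point needing care is that the index in the statement may be additive, and that is handled by \cref{inker}.
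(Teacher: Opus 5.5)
Your proof is correct and is the paper's argument: $B$ is a finite union of multiplicative cosets of $\ker\lambda$, and $\lambda_b$ depends only on the coset of $b$, so $|[x]_\lambda|\le|B:\ker\lambda|$. Your use of \cref{inker} to reconcile the additive and multiplicative indices is a clarification the paper leaves implicit, since it simply works with multiplicative cosets. For that step, $\ker\lambda$ also needs to be an additive subgroup, which holds because $a\circ b=a+b$ and $\overline{a}=-a$ for $a,b\in\ker\lambda$.
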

\begin{proof}
    Let $t$ be $|B:\ker\lambda|$. Then, there exist $x_1,...,x_t$ elements of $B$ such that $$B=\bigcup_{i=1}^t\ker\lambda\circ x_i.$$ Let $a,b\in B$. Then, there exist $h\in\ker\lambda$ and $i\in \{1,...,t\}$ such that $a=h\circ x_i$. Thus $$\lambda_a(b)=\lambda_{h\circ x_i}(b)=\lambda_{x_i}(b)$$ and, since we have finitely many possibilities for $x_i$, $\left|[b]_\lambda\right|$ is finite.
\end{proof}

In the following result, we prove that the converse holds for skew braces of finitely-generated type.

\begin{lem}\label{lamf}
    Let $(B,+,\circ)$ be a skew brace such that the additive group $(B,+)$ is generated by a finite set of $\lambda_f$-elements $\{x_1,...,x_t\}$. Then, $|B:\ker\lambda|$ is finite. Moreover, if for all $i$, one has that $|[x_i]_{\lambda}| \leq k$, then $|B:\ker\lambda| \leq k^t. $
\end{lem}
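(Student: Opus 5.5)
The plan is to express $\ker\lambda$ as an intersection of finitely many $\lambda$-stabilizers and then bound the index of each one. Since $\lambda_a\in\Aut(B,+)$ and $(B,+)=\langle x_1,\dots,x_t\rangle_+$, an automorphism that fixes every generator fixes all of $B$. Hence
\[
\ker\lambda=\{a\in B\mid \lambda_a=\id_B\}=\bigcap_{i=1}^t \Stab_\lambda(x_i)=\PStab_B(\{x_1,\dots,x_t\}),
\]
and we may regard this as $\PStab_B(B)$.

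Next we bound the index of each factor. Each $\Stab_\lambda(x_i)$ is a subgroup of $(B,\circ)$, and by the Orbit-Stabilizer correspondence (applied to the action $\lambda$ of $(B,\circ)$ on $B$) its index $|B:\Stab_\lambda(x_i)|_\circ$ equals $|[x_i]_\lambda|$. This is finite by hypothesis and is at most $k$ under the extra assumption.

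The intersection of $t$ subgroups of indices $n_1,\dots,n_t$ has index at most $n_1\cdots n_t$, by Poincaré's bound. Concretely, the map
\[
a\circ\ker\lambda\mapsto \bigl(a\circ\Stab_\lambda(x_1),\dots,a\circ\Stab_\lambda(x_t)\bigr)
\]
is injective. Equivalently, $a\mapsto(\lambda_a(x_1),\dots,\lambda_a(x_t))\in\prod_i[x_i]_\lambda$ induces an injection of $(B,\circ)/\ker\lambda$ into a set of size at most $k^t$, because $\lambda_a=\lambda_b$ if and only if $\lambda_a$ and $\lambda_b$ agree on the generators. This gives $|B:\ker\lambda|_\circ\le k^t$.

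Finally, since $\ker\lambda$ is a sub skew brace contained in $\ker\lambda$, \cref{inker} gives $|B:\ker\lambda|_+=|B:\ker\lambda|_\circ$, so the index is unambiguous. No step is a real obstacle. The only point needing care is that $(B,+)$, and not $(B,\circ)$, is generated by the $x_i$; this is exactly what makes "agreeing on the generators" force $\lambda_a=\lambda_b$, because $\lambda_a$ is an additive automorphism.
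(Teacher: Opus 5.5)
Your proposal is correct and follows essentially the same route as the paper: identify $\ker\lambda$ with $\bigcap_{i=1}^t \Stab_\lambda(x_i)$, using that each $\lambda_a$ is an additive automorphism and the $x_i$ generate $(B,+)$, and then bound the index of this intersection by $\prod_i |[x_i]_\lambda| \le k^t$. Your appeal to \cref{inker} to match the additive and multiplicative indices is a harmless extra; the paper only invokes it later, in \cref{thetafg}.
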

\begin{proof}
    By hypothesis, $(B,+)=<x_1,...,x_t>_+$, where $|(B,\circ):\Stab_\lambda(x_i)|<\infty$ for each $i\in \{1,...,t\}$. Thus, if we show that $\ker\lambda=\Stab_\lambda(x_1)\cap...\cap\Stab_\lambda(x_t)$, the statement is proved. Clearly $\ker\lambda\leq\Stab_\lambda(x_1)\cap...\cap\Stab_\lambda(x_t)$; in order to prove the converse, let us consider $y\in \Stab_\lambda(x_1)\cap...\cap\Stab_\lambda(x_t)$ and $a\in B$. Then there exist $n_1,...,n_t\in \mathbb{Z}$ such that $a=n_1x_1+...+n_tx_t$. Therefore $\lambda_y(a)=\lambda_y(n_1x_1+...+n_tx_t)=n_1\lambda_y(x_1)+...+n_t\lambda_y(x_t)=n_1x_1+...+n_tx_t=a$ and $y\in\ker\lambda$. 
    Hence, it follows that $|B:\ker \lambda|<\infty$. Assume now that $|[x_i]_{\lambda}| \leq k$ for each $i\in\{1,...,t\}$, then 
    \[
    |B: \ker \lambda| =|B: \bigcap_{i=1}^t\Stab_{\lambda}(x_i)| \leq \prod_{i=1}^t|B:\Stab_{\lambda}(x_i)|=\prod_{i=1}^t|[x_i]_{\lambda}|\leq k^t.\qedhere
    \]
\end{proof}

    Note that $\cref{lamf}$ can not be extended to arbitrary $\lambda_f$-skew braces, as the following example shows.
\begin{exa}
    Denote $B$ the skew brace obtained in \cref{exafree}. Then, one obtains that $\ker \lambda= \left\lbrace w \mid \varepsilon(w)\in 2\mathbb{Z}\right\rbrace$. In particular, $a-b, a^2 \in  \ker \lambda$, which shows that $|B:\ker\lambda|_{\circ}=2$, in accordance with \cref{lamf}. Moreover, it can be easily seen that $\ker\lambda$ is an ideal of $B$.
    
    Then, $\bigoplus_{i\in \mathbb{N}} B_i$, where each $B_i \cong B$, is a $\lambda_f$-skew brace and $\ker \lambda = \bigoplus_{i\in \mathbb{N}} \ker \lambda_i$, where $\lambda_i$ denotes the $\lambda$-map of $B_i$. This shows that $$B/\ker\lambda \cong \bigoplus_{i\in \mathbb{N}} \mathbb{Z}_2,$$ where the right hand side is an infinite trivial skew brace.
\end{exa}

Note that in \cref{lamf} we explicitly mentioned and used that the additive group of the skew brace was finitely generated. For general skew braces, this is a stronger demand than that the skew brace itself is finitely generated. Recall that one says that a skew brace $B$ is finitely generated if there exists a finite subset $T$ such that the smallest skew brace containing $T$ is $B$. In the following series of results, we will show that these concepts coincide for the class of $\lambda_f$-skew braces. First, we give an explicit method to relate a set of generators of $B$ as skew brace to a set of generators of $(B,+)$ and $(B,\circ)$.

\begin{lem}
\label{gens}
    Let $B$ be a skew brace and let $T$ be a set of generators (as a skew brace) of $B$.
    Then, denoting $T'=\bigcup_{b\in B}\lambda_b(T)$,
    \[
    U=T' \cup (-T')
    \] 
    generates $(B,+)$ and $(B,\circ)$.
\end{lem}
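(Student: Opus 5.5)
The plan is to show that the additive subgroup $H=\langle U\rangle_+$ is a sub skew brace of $B$ containing $T$. Minimality of the generated skew brace then forces $H=B$, which gives generation of $(B,+)$. For $(B,\circ)$ we then show that every element of $H$ is a $\circ$-product of elements of $U$.

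\textbf{Additive generation.} Since $T=\lambda_{0}(T)\subseteq T'$, the subgroup $H$ contains $T$. To see that $H$ is a sub skew brace, it suffices to check that $\lambda_a(H)=H$ for all $a\in H$; in fact we get the stronger statement $\lambda_b(H)\subseteq H$ for all $b\in B$. Because $\lambda_b$ is an additive automorphism, it is enough to check this on the generators. For $t\in T$ and $c\in B$ we have $\lambda_b(\lambda_c(t))=\lambda_{b\circ c}(t)\in T'$, and $\lambda_b(-\lambda_c(t))=-\lambda_{b\circ c}(t)\in -T'$. Hence $U$ is $\lambda_b$-stable, so $H$ is a left ideal and in particular a sub skew brace. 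It contains $T$, so $H=B$.

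\textbf{Multiplicative generation.} Let $K=\langle U\rangle_\circ$. We first show $U\subseteq K$, which is trivial, and then that $H\subseteq K$, by proving that $K$ is closed under $+$ and additive inverses. The key identity is $a+b=a\circ\lambda_a^{-1}(b)=a\circ \lambda_{\overline a}(b)$.
\begin{itemize}
\item For $u\in U$ and $k\in K$ we get $k+u=k\circ\lambda_{\overline{k}}(u)$. Since $\lambda_{\overline k}(u)\in U$ by the $\lambda$-stability of $U$ shown above, we have $k+u\in K$.
\item By induction on word length, $K$ contains every finite sum $u_1+\dots+u_n$ of elements of $U$.
\item Because $U=-U$, these finite sums exhaust $\langle U\rangle_+=H=B$.
\end{itemize}
Hence $K=B$.

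The only delicate point is the $\lambda$-stability of $U$, which is exactly why $T'$ is taken to be the full union of $\lambda$-images. Including $-T'$ makes both generation statements work without needing inverses in the respective group. I do not expect any further obstacle.
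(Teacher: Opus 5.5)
Your proof is correct and is essentially the paper's argument. The multiplicative step is the same rewriting of a sum of elements of $U$ as a $\circ$-product, using that $U$ is stable under every $\lambda_b$. For the additive step, the paper inducts over the iterated closure $U_n$, using $\overline{x}=-\lambda_{\overline{x}}(x)$ and $x\circ y=x+\lambda_x(y)$. You repackage that induction as the observation that $\langle U\rangle_+$ is a $\lambda$-invariant subgroup, hence a sub skew brace containing $T$, hence equal to $B$.

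Your identity $k+u=k\circ\lambda_{\overline{k}}(u)$, applied one summand at a time, is also slightly cleaner than the paper's displayed product. There the subscript $\lambda^{-1}_{u_1\circ\cdots\circ u_{k-1}}$ should read $\lambda^{-1}_{u_1+\cdots+u_{k-1}}$; the slip is harmless because $U$ is $\lambda$-stable anyway.
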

\begin{proof}
Since the skew brace $B$ is generated by $T$, it is also generated by $U$.
Recall that the skew brace generated by the set $U$ is the union of the sets $U_n$,
where $U_1=U$ and
\[
 U_{n+1} 
 = \left\lbrace x\circ y, x+y, -x, \overline{x} \mid x,y \in U_n \right\rbrace \cup U_n.
 \]
 Hence, we proceed by induction.
 Clearly, every element of $U_1$ is the sum of elements of $U$. 
 Hence, assume that every element in $U_{n}$ is writable as a sum of elements of $U$.
 Let $x,y \in U_n$. 
 Then, clearly $x+y$ and $-x$ are sums of elements of $U$ 
 as $U$ is closed under taking additive inverses.
 Secondly, we note that $\overline{x}=-\lambda_{\overline{x}}(x)$. 
 As $x$ is a sum of elements of $U$ and the latter
 is closed under images of $\lambda_b$ for every $b\in B$ and additive inverses,
 one finds that $\overline{x}$ is a sum of elements of $U$.
 Similarly, we note that $x \circ y = x +\lambda_x(y)$,
 which is a sum of elements of $U$ for the same reasons.
 Hence, $(B,+)$ is generated as a group by $U$.
    
Finally, we note that if $$ x= u_1+ ... +u_k$$ for some $u_1,...,u_k \in U$
one finds that 
\[
x = u_1 \circ \lambda_{u_1}^{-1}(u_2)\circ ... \circ \lambda_{u_1\circ ... \circ u_{k-1}}^{-1}(u_k).
\]
As $U$ is closed under $\lambda$, it follows that 
this decomposition of $x$ is a product of elements of $U$. 
Hence, $U$ generates $(B,\circ)$, which shows the result.
\end{proof}

Hence, the fact that being finitely generated is non-ambiguous for $\lambda_f$-skew brace follows.

\begin{thm}\label{fg}
    Let $B$ be a $\lambda_f$-skew brace. Then, the following are equivalent.
    \begin{enumerate}
        \item The skew brace $B$ is finitely generated as a skew brace,
        \item the group $(B,\circ)$ is finitely generated,
        \item the group $(B,+)$ is finitely generated.
    \end{enumerate}
\end{thm}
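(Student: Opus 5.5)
The plan is to prove $(1)\Rightarrow(2)$ and $(1)\Rightarrow(3)$ using \cref{gens}. Then we close the cycle by noting that either (2) or (3) trivially gives (1).

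\emph{From (1) to (2) and (3).} Let $T$ be a finite set generating $B$ as a skew brace. Since $B$ is a $\lambda_f$-skew brace, every $t\in T$ has a finite $\lambda$-orbit $[t]_\lambda$. Hence $T'=\bigcup_{b\in B}\lambda_b(T)=\bigcup_{t\in T}[t]_\lambda$ is a finite union of finite sets, and so $U=T'\cup(-T')$ is finite. By \cref{gens}, $U$ generates both $(B,+)$ and $(B,\circ)$, so both groups are finitely generated. This is the only place the $\lambda_f$ hypothesis is needed.

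\emph{From (2) or (3) to (1).} A finite generating set of $(B,\circ)$, or of $(B,+)$, is in particular a finite set whose generated sub skew brace contains the subgroup it generates. That subgroup is all of $B$, so $B$ is finitely generated as a skew brace. Thus (2)$\Rightarrow$(1) and (3)$\Rightarrow$(1).

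Combining the two directions gives the equivalence of (1), (2) and (3). I do not expect a real obstacle: the content lies in \cref{gens}, which expresses skew brace generation through the $\lambda$-closure of the generators, and the $\lambda_f$ assumption is exactly what keeps that closure finite. The only point to check is that $\bigcup_{b}\lambda_b(T)$ really equals $\bigcup_{t\in T}[t]_\lambda$, which is immediate from the definition of the $\lambda$-orbit.
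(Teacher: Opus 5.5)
Your proposal is correct and follows essentially the same argument as the paper: both directions (2)$\Rightarrow$(1) and (3)$\Rightarrow$(1) are immediate. For (1)$\Rightarrow$(2),(3), the $\lambda_f$ hypothesis makes $T'=\bigcup_{t\in T}[t]_\lambda$, and hence $U=T'\cup(-T')$, finite, so \cref{gens} gives finite generating sets for both $(B,+)$ and $(B,\circ)$.
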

\begin{proof}
    Clearly $(2)$ and $(3)$ imply $(1)$.
    Now we prove that $(1)$ implies $(2)$ and $(3)$.
    Let $T$ be a finite set of generators (as a skew brace) of $B$. 
    Then, denote $T'=\bigcup_{b\in B}\lambda_b(T)$ and $U=T' \cup (-T')$.
    As $T$ was finite and $B$ is $\lambda_f$, it follows that $T'$ is finite.
    Hence $U$ is finite and by \cref{gens} it generates $(B,+)$ and $(B,\circ)$.
\end{proof}
\cref{fg} allows us to rewrite \cref{lamf} to allow being finitely generated as a skew brace.
\begin{cor}
    Let $B$ be a skew brace. If $B$ is finitely generated by a set of $\lambda_f$-elements, then $|B:\ker\lambda|$ is finite.
\end{cor}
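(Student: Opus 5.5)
The hypothesis is that $B$ is generated, as a skew brace, by a finite set $T$ of $\lambda_f$-elements. The plan is to reduce to \cref{lamf} by producing a finite generating set of $(B,+)$ that consists of $\lambda_f$-elements.

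First, apply \cref{sli}: the set $\lambda_f(B)$ is a left ideal of $B$. In particular it is a subgroup of $(B,+)$ invariant under every $\lambda_b$. Since $T\subseteq\lambda_f(B)$, the set
\[
T'=\bigcup_{b\in B}\lambda_b(T)
\]
is finite, because each of the finitely many $t\in T$ has a finite $\lambda$-orbit. Moreover $T'\subseteq\lambda_f(B)$ by $\lambda$-invariance of the left ideal. Put $U=T'\cup(-T')$. This set is still finite and still contained in $\lambda_f(B)$, since $\lambda_f(B)$ is an additive subgroup.

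Next, invoke \cref{gens}. It only requires that $T$ generate $B$ as a skew brace, not that $B$ be $\lambda_f$. It gives that $U$ generates $(B,+)$. So $(B,+)$ is generated by the finite set $U$ of $\lambda_f$-elements, and \cref{lamf} yields that $|B:\ker\lambda|$ is finite. \cref{lamf} also gives an explicit bound $k^{|U|}$, where $k$ bounds the orbit sizes of elements of $U$.

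As a remark, the same argument shows that $\lambda_f(B)$, being an additive subgroup that contains the additive generators in $U$, is all of $B$. Hence $B$ is itself a $\lambda_f$-skew brace, and one could alternatively conclude via \cref{fg} followed by \cref{lamf}.

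The only real obstacle is noticing that \cref{fg} cannot be applied directly, because we do not know a priori that $B$ is $\lambda_f$. That gap is bridged by combining the left ideal property of \cref{sli} with the explicit generating set of \cref{gens}.
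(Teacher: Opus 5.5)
Your proof is correct and is essentially the paper's argument: the paper observes that $\lambda_f(B)$, being a left ideal (hence a sub skew brace) containing $T$, must equal $B$, and then applies \cref{fg} followed by \cref{lamf}, which is exactly your closing remark. Your main line simply inlines the proof of \cref{fg} (that is, \cref{gens} applied to the finite set $T'$), using that only the generators need finite $\lambda$-orbits; the ``obstacle'' you mention is thus resolved in one line by the sub-skew-brace property of $\lambda_f(B)$.
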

\begin{proof}
    If $B$ is generated by a set $T$ of $\lambda_f$-elements, then $B$ is contained in $\lambda_f(B)$, as the latter is a skew brace containing $T$. Thus, $B$ is a $\lambda_f$-skew brace. Hence, by \cref{fg}, $(B,+)$ is also finitely generated and the result follows from \cref{lamf}. 
\end{proof}

We record the following fact, as it is a straightforward consequence of combining \cref{lamf}, \cref{fg}, and the well-known fact that subgroups of finite index of a finitely generated group are finitely generated (cfr. \cite[Result 1.6.11]{robinson2}). 

\begin{pro}
    Let $B$ be a skew brace such that $B/\Soc(B)$ is finite. If $B$ is finitely generated, then $\Soc(B)$ is finitely generated.
\end{pro}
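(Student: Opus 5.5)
The plan is to reduce the statement to the classical fact that a subgroup of finite index in a finitely generated group is finitely generated, applied to $(\Soc(B),+)$ inside $(B,+)$. Since $\Soc(B)$ is an ideal, it suffices to show two things: that $(B,+)$ is finitely generated, and that $\Soc(B)$ has finite additive index in $B$. Once $(\Soc(B),+)$ is known to be finitely generated, $\Soc(B)$ is finitely generated as a skew brace, because any additive generating set also generates it as a skew brace.

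First, finiteness of $B/\Soc(B)$ makes $B$ a $\lambda_f$-skew brace. Indeed, $\Soc(B)\subseteq\ker\lambda$. Since $\Soc(B)$ is an ideal, the quotient $B/\Soc(B)$ being finite means $|B:\Soc(B)|_+=|B:\Soc(B)|_\circ$ is finite; this equality of indices also follows from \cref{inker}, since $\Soc(B)\subseteq\ker\lambda$. So $\ker\lambda$, which is a subgroup of $(B,\circ)$ containing $\Soc(B)$, has finite multiplicative index. Then \cref{lambdafkerfinite} shows that every element of $B$ is a $\lambda_f$-element.

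Next, since $B$ is a finitely generated $\lambda_f$-skew brace, \cref{fg} gives that $(B,+)$ is finitely generated. (\cref{lamf} would additionally give $|B:\ker\lambda|$ finite, but we already know this.) Now $(\Soc(B),+)$ is a subgroup of finite index $|B/\Soc(B)|$ in the finitely generated group $(B,+)$. By \cite[Result 1.6.11]{robinson2}, $(\Soc(B),+)$ is finitely generated, and hence so is the skew brace $\Soc(B)$.

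I do not expect a real obstacle. The only point needing care is checking that the hypothesis of \cref{fg}, namely that $B$ is $\lambda_f$, actually holds, and this is exactly what the chain $\Soc(B)\subseteq\ker\lambda$ together with \cref{lambdafkerfinite} supplies.
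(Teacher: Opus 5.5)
Your proof is correct and follows essentially the same route as the paper, which records the statement as a direct consequence of \cref{fg} together with the fact that finite-index subgroups of finitely generated groups are finitely generated. You also make explicit the step the paper leaves implicit: it cites \cref{lamf}, but what is actually used to check that $B$ is $\lambda_f$, via $\Soc(B)\subseteq\ker\lambda$, is \cref{lambdafkerfinite}.
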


By \cref{inker} the index of $\Fix(B) \cap \ker \lambda$ in $B$ is well-defined. Hence, one obtains the following result, which can be viewed as a variant of Schur's theorem. Note that here we only obtain that $B^2$, the analog of the commutator subgroup, is only finitely generated, not necessarily finite. This discrepancy follows from the absence of demands on the additive center, compare with \cite{MR4256133}.

\begin{thm}
    Let $B$ be a skew brace.
    If $|B:\ker\lambda\cap\Fix(B)|$ is finite, then $(B^2,+)$ is finitely generated.
\end{thm}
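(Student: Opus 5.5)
Set $K=\ker\lambda\cap\Fix(B)$. By \cref{inker}, $K$ is a sub skew brace contained in $\ker\lambda$, so its additive and multiplicative cosets coincide. Hence $|B:K|_+=|B:K|_\circ=n$ is finite, and we may pick a finite transversal $T=\{t_1,\dots,t_n\}$ with $B=\bigcup_i (t_i+K)=\bigcup_i (K\circ t_i)$. The plan is to show that $B^2$ is additively generated by the finitely many elements $t_i*t_j$.

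The first step computes $a*b$ for $a=k\circ t_i$ and $b=t_j+k'$ with $k,k'\in K$. Since $k\in\ker\lambda$, we get $\lambda_a=\lambda_{t_i}$. Since $k'\in\Fix(B)$, we get $\lambda_{t_i}(k')=k'$. Therefore
\[
a*b=\lambda_{t_i}(t_j)+k'-k'-t_j=\lambda_{t_i}(t_j)-t_j=t_i*t_j .
\]
So the set $\{a*b\mid a,b\in B\}$ is contained in the finite set $\{t_i*t_j\mid 1\le i,j\le n\}$. By definition $B^2$ is the additive subgroup generated by all $a*b$, so it is generated by at most $n^2$ elements and is finitely generated.

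The only point needing care is the choice of coset representations. For $a$ we use the multiplicative left coset form $k\circ t_i$, so that $\lambda_a=\lambda_k\lambda_{t_i}=\lambda_{t_i}$. For $b$ we use the additive coset form $t_j+k'$, so that additivity of $\lambda_{t_i}$ applies. Both are available because \cref{inker} identifies the two coset decompositions (cosets on either side, since $K\circ x=K+x$, and $K$ is normal-free of issues here: we only need $B=\bigcup K\circ t_i=\bigcup (t_j+K)$). Right additive cosets $K+t_j$ can be used instead if preferred; the computation is symmetric. I do not expect any real obstacle beyond this bookkeeping. The result gives only finite generation, not finiteness: there is no control on $\lambda_{t_i}(t_j)-t_j$ beyond its being one of finitely many values, and no additive centrality forces torsion.
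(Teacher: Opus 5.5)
Your proof is correct and is essentially the paper's argument: the paper writes $a=x_i\circ k$ with $k\in\ker\lambda$ and $b=y_j+f$ with $f\in\Fix(B)$, using separate finite transversals for $\ker\lambda$ in $(B,\circ)$ and for $\Fix(B)$ in $(B,+)$, and gets $a*b=x_i*y_j$. Using two transversals is also the clean way to phrase your version, because \cref{inker} only gives $K+x=K\circ x$ and so does not by itself produce a single $T$ that works for both the cosets $t_i+K$ and $K\circ t_i$; also drop your remark that right additive cosets work symmetrically, since $b=k'+t_j$ gives $k'+(t_i*t_j)-k'$ rather than $t_i*t_j$.
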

\begin{proof}
    Let $t$ be the index of $\ker\lambda$ in $(B,\circ)$ and $s$ be the index of $\Fix(B)$ in $(B,+)$.
    Then there are $x_1,\dots,x_t\in B$ and $y_1,\dots, y_s\in B$ such that
    \[
    B=\bigcup_{i=1}^t \ker\lambda\circ x_i=\bigcup_{j=1}^s \Fix(B)+ y_j.
    \]
    Then for every $a,b\in B$ there exist $k\in \ker\lambda$, $f\in \Fix(B)$, $i\in\{1,\dots,t\}$ and $j\in\{1,\dots,s\}$ such that
    $a=x_i\circ k$ and $b=y_j+f$.
    In this way, 
    \[
    a * b = 
    \lambda_{x_i\circ k}(y_j+f)-(y_j+f)=
    \lambda_{x_i}(y_j)+f-f-y_j=
    \lambda_{x_i}-y_j=
    x_i*y_j.
    \]
    Therefore $(B^2,+)$ is finitely generated by 
    $\big\{x_i*y_j\mid i\in\{1,\dots,t\}, j\in\{1,\dots,s\} \big\}$.
\end{proof}
 
Clearly, the reverse does not hold. Indeed, consider the almost trivial skew brace on the infinite dihedral group $D_{\infty}$. Then, $\ker\lambda=\Fix(B)=Z(D_{\infty})=1$, but $[D_{\infty},D_{\infty}]$ is cyclic. Observe however that, if $B^2$ is finite, then $\lambda_f(B)=B$ since, for any fixed $d\in B$ $$\{\lambda_c(d)\ | \ c\in B\}=\{c*d+d\ | \ c\in B\}.$$ The reverse does not hold (see \cref{es1}).

The following results are akin to Baer's theorem, which shows that the quotient of an $FC$-group over its center is a residually finite torsion group \cite[Result 14.5.6]{robinson2}.
\begin{pro}
    Let $(B,+,\circ)$ be a $\lambda_f$-skew  brace. 
    Then 
    $$\frac{(B,\circ)}{(\ker\lambda,\circ)}$$
    is residually finite.
\end{pro}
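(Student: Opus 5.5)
We show that $(B,\circ)/(\ker\lambda,\circ)$ embeds in a direct product of finite groups, mirroring the classical proof of Baer's theorem for $FC$-groups with the $\lambda$-action playing the role of conjugation. Since $\lambda\colon (B,\circ)\to\Aut(B,+)$ is a group morphism with kernel $\ker\lambda$, we have $(B,\circ)/\ker\lambda\cong\lambda(B)\le\Aut(B,+)$. It therefore suffices to show that the group $\lambda(B)$ is residually finite.

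For each $x\in B$, the orbit $[x]_\lambda$ is finite because $B$ is a $\lambda_f$-skew brace. The group $(B,\circ)$ acts on the finite set $[x]_\lambda$ via $\lambda$, which gives a homomorphism
\[
\pi_x\colon (B,\circ)\to \operatorname{Sym}([x]_\lambda)
\]
into a finite group. Its kernel is
\[
K_x=\bigcap_{y\in[x]_\lambda}\Stab_\lambda(y),
\]
a normal subgroup of finite index in $(B,\circ)$. It is in fact the core of $\Stab_\lambda(x)$, since
\[
\Stab_\lambda(\lambda_a(x))=a\circ\Stab_\lambda(x)\circ\overline{a}.
\]

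Next we identify the intersection of these kernels:
\[
\bigcap_{x\in B}K_x=\bigcap_{x\in B}\Stab_\lambda(x)=\ker\lambda.
\]
Indeed, an element $b$ lies in every stabilizer precisely when $\lambda_b(x)=x$ for all $x$, which means $\lambda_b=\id_B$. Since $\ker\lambda\subseteq K_x$ for every $x$, each $\pi_x$ factors through $(B,\circ)/\ker\lambda$. The induced map
\[
(B,\circ)/\ker\lambda\longrightarrow\prod_{x\in B}\operatorname{Sym}([x]_\lambda)
\]
is therefore injective. A subgroup of a direct product of finite groups is residually finite: for a nontrivial element $g\ker\lambda$, some $x$ satisfies $g\notin K_x$, and then $K_x/\ker\lambda$ is a normal subgroup of finite index that avoids $g\ker\lambda$.

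No step here is a genuine obstacle. The only point needing care is that the finite-index subgroups we use must be normal, which is why we take the kernels $K_x$ (the cores of the stabilizers) rather than the stabilizers $\Stab_\lambda(x)$ themselves. Finiteness of the index of $K_x$ follows from the fact that it is the kernel of a map to the finite group $\operatorname{Sym}([x]_\lambda)$.
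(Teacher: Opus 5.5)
Your proof is correct and uses the same idea as the paper. The paper just notes that each $\Stab_\lambda(x)$ has finite index in $(B,\circ)$ and that $\ker\lambda=\bigcap_{x\in B}\Stab_\lambda(x)$; your passage to the normal cores $K_x$, the kernels of the action on the finite orbits $[x]_\lambda$, makes explicit the step the paper leaves implicit.
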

\begin{proof}
    Since $B$ is a $\lambda_f$-skew brace, the index $|(B,\circ):\Stab_\lambda(x)|$ is finite for each $x\in B$ and $\ker\lambda=\bigcap_{x\in B}\Stab_\lambda(x)$. 
\end{proof}

As mentioned in \cref{secindex} the notion of almost $\lambda$-invariance of a sub skew brace $A$ in $B$ is closely related to the $\lambda_f$-elements of $B$. 
\begin{pro}
    Let $(B,+,\circ)$ be a skew brace and let $(A,+,\circ)$ be a finitely generated sub skew brace of $B$. Then
    \begin{enumerate}
        \item If $A$ is contained in $\lambda_f(B)$, then $A$ is almost $\lambda$-invariant in $B$.
        \item If $(A,+)$ is finite or cyclic and almost $\lambda$-invariant in $B$, then $A$ is contained in $\lambda_f(B)$.
    \end{enumerate}
\end{pro}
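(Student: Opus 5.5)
For (1), I will reduce everything to the finitely many $\lambda$-images of a finite generating set. Since $A$ is finitely generated and $A\subseteq\lambda_f(B)$, the sub skew brace $A$ is itself a $\lambda_f$-skew brace. Indeed, the $\lambda$-orbit of $x\in A$ under $A$ is contained in its $\lambda$-orbit under $B$. By \cref{fg}, $(A,+)$ is therefore generated by a finite set $\{x_1,\dots,x_t\}$. Each $x_i$ is a $\lambda_f$-element of $B$, so $\Stab_\lambda(x_i)$ has finite index in $(B,\circ)$. Set
\[
K=\bigcap_{i=1}^t \Stab_\lambda(x_i),
\]
which has finite index in $(B,\circ)$. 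For $k\in K$, the map $\lambda_k$ is an additive automorphism fixing every generator of $(A,+)$. Hence $\lambda_k$ fixes $A$ pointwise, so $K\subseteq \PStab_B(A)\subseteq\Stab_\lambda(A)$. Thus $|B:\Stab_\lambda(A)|_\circ<\infty$, which is exactly almost $\lambda$-invariance.

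For (2), suppose $|B:\Stab_\lambda(A)|_\circ<\infty$. By \cref{quotient}, $\Stab_\lambda(A)/\PStab_B(A)$ embeds in $\Aut(A,+)$. When $(A,+)$ is finite or cyclic, $\Aut(A,+)$ is finite; for infinite cyclic it has order $2$. Hence $\PStab_B(A)$ has finite index in $\Stab_\lambda(A)$, and therefore finite index in $(B,\circ)$. Every $a\in A$ is fixed by $\PStab_B(A)$, so $\Stab_\lambda(a)\supseteq\PStab_B(A)$ has finite index. It follows that $a$ is a $\lambda_f$-element, giving $A\subseteq\lambda_f(B)$.

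The only step needing care is in (1): we must pass from generation of $A$ as a skew brace to generation of $(A,+)$. This is why we first observe that $A$ is $\lambda_f$ as a skew brace in its own right, so that \cref{fg} applies. Alternatively, \cref{gens} applied inside $B$ gives the same conclusion directly: take the union of the finite $B$-orbits of the generators together with their negatives. In (2), the hypothesis on $(A,+)$ enters only through the finiteness of $\Aut(A,+)$.
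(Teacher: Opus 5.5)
Your proof is correct and follows the paper's argument essentially verbatim. In (1) you use \cref{fg} to get finitely many additive generators and intersect their finite-index $\lambda$-stabilizers, and you helpfully make explicit why $A$ is itself $\lambda_f$; in (2) you use \cref{quotient} and the finiteness of $\Aut(A,+)$ to show $\PStab_B(A)$ has finite index. One small slip in your aside: \cref{gens} must be applied to $A$ with $A$-orbits, because the $B$-orbits of the generators need not lie in $A$ and generate the left ideal of $B$ spanned by them rather than $(A,+)$. The finite-index pointwise stabilizer of that left ideal still fixes $A$, so the final conclusion is unaffected.
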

\begin{proof}
    \textit{1.} Let $A$ be a sub skew brace of $B$ contained in $\lambda_f(B)$. By \cref{fg}, the additive group of $A$ is finitely generated, say by $\left\lbrace x_1,..., x_t \right\rbrace$. Then $$\Stab_\lambda(x_1)\cap...\cap \Stab_\lambda(x_t)$$ is contained in $\Stab_\lambda(A)$ and has finite index in $(B,\circ)$. Thus, $|B:\Stab_\lambda(A)|_{\circ}$ is finite and $A$ is almost $\lambda$-invariant in $B$.

    \medskip
    
    \textit{2.} Assume now that $(A,+)$ is finite or cyclic and that $A$ is almost $\lambda$-invariant in $B$. By \cref{quotient}, the quotient group $(\Stab_\lambda(A)/\PStab_B(A),\circ)$ is isomorphic to a subgroup of $\Aut(A,+)$ which is finite. Thus $|B:\PStab_B(A)|_{\circ}$ is finite and the statement is proved.
    
\end{proof}

As a consequence, if all the sub skew braces of a skew  brace $B$ are almost $\lambda$-invariant, then $B=\lambda_f(B)$.

    Note that an almost $\lambda$-invariant sub skew brace of a skew brace $B$ with a finitely generated additive group is not necessarily contained in $\lambda_f(B)$.
\begin{exa}
    Consider, the almost trivial skew brace $B=\op\Triv(D_\infty)$
    on the infinite dihedral group.
    As noted in \cref{exaoptriv} the $\lambda$-action of $B$ coincides with the conjugation in $D_\infty$. Thus, $B$ is a sub skew brace of $B$ that is (almost) $\lambda$-invariant and has a finitely generated additive group, but $\lambda_f(B)=FC(D_\infty)$ does not contain $B$.
\end{exa}

\section{Skew braces with finite \texorpdfstring{$\theta$}{lambda}-classes}\label{sectheta}

Often in skew braces one combines a condition on skew braces with a similar condition on the additive group to result in properties that can be transported onto set-theoretic solution. In this section we introduce $\theta_f$-elements and $\theta_f$-skew braces, which correspond to $\lambda_f$-elements that are $FC$ in the additive group. We show that $\theta_f$-elements form a strong left ideal. In the remainder of the section we examine the structure of $\theta_f$-skew braces, showing that these behave similarly to $FC$-groups, where $\Soc(B)$ plays the role of the center. However, we will note some striking differences.

\begin{defn}
    Let $(B,+,\circ)$ be a skew brace and let $K$ be a subset of $B$. $K$ is called \emph{$\theta$-invariant} if $\theta_{(a,b)}(K)\subseteq K$, for every $a,b\in B$.
\end{defn}

\begin{pro}
    Let $(B,+,\circ)$ be a skew brace and let $K$ be a subgroup of $(B,+)$. Then, $K$ is a strong left ideal if and only if $K$ is $\theta$-invariant.
\end{pro}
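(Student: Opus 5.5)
The plan is to prove both implications directly from the formula $\theta_{(a,b)}(c)=a+\lambda_b(c)-a$. We use two special cases of it: $\theta_{(a,0)}$ is additive conjugation by $a$, and $\theta_{(0,b)}=\lambda_b$. Here $0$ is the common identity of $(B,+)$ and $(B,\circ)$. The identities hold because $\lambda_0=\id$ and conjugation by $0$ is trivial.

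For the forward direction, assume $K$ is a strong left ideal. Take $a,b\in B$ and $k\in K$. Since $K$ is a left ideal, $\lambda_b(k)\in K$. Since $K$ is normal in $(B,+)$, it follows that $a+\lambda_b(k)-a\in K$. Hence $\theta_{(a,b)}(K)\subseteq K$, so $K$ is $\theta$-invariant.

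For the converse, assume $\theta_{(a,b)}(K)\subseteq K$ for all $a,b\in B$. Taking $b=0$ gives $a+K-a\subseteq K$ for all $a$, so $K$ is normal in $(B,+)$. Taking $a=0$ gives $\lambda_b(K)\subseteq K$ for every $b\in B$.

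The only subtle point is that a left ideal is defined by the equality $\lambda_b(K)=K$, not just an inclusion. To get equality, apply the inclusion to $\overline{b}$, which gives $\lambda_{\overline{b}}(K)\subseteq K$. Since $\lambda$ is a group morphism, $\lambda_{\overline{b}}=\lambda_b^{-1}$, so $K=\lambda_b(\lambda_b^{-1}(K))\subseteq\lambda_b(K)$. Combined with $\lambda_b(K)\subseteq K$ this yields $\lambda_b(K)=K$. The same argument, or simply the fact that normality follows from conjugation-invariance under all elements, handles additive normality. Thus $K$ is a strong left ideal.

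There is no real obstacle here. The only step needing care is upgrading the inclusions to equalities, and the inverse trick above does that.
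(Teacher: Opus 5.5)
Your proof is correct and follows essentially the same route as the paper. The forward direction combines $\lambda$-invariance with additive normality, and the converse specializes to $\theta_{(0,b)}=\lambda_b$ and $\theta_{(a,0)}$ (additive conjugation by $a$). Your extra step upgrading $\lambda_b(K)\subseteq K$ to $\lambda_b(K)=K$ via $\lambda_{\overline{b}}=\lambda_b^{-1}$ is a detail the paper leaves implicit.
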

\begin{proof}
    Suppose first that $K$ is a strong left ideal, so that, by definition, $K$ is $\lambda$-invariant and contains each conjugate of its elements respect to the $+$ operation. It follows that, for every $a,b\in B$, the set $$\theta_{(a,b)}(K)=\{a+\lambda_b(x)-a\mid x \in K\}$$ is contained in $K$. The converse is a direct consequence of the fact that $\theta_{(1,b)}(K)=\lambda_b(K)$ and $\theta_{(a,1)}(K)=K^a$.
\end{proof}

For a skew brace $B$ the following lemma explains how to find the smallest strong left ideal containing a subgroup $(K,+)$ of $(B,+)$. Note that applying this to a trivial skew brace, one obtains the well-known link between a subgroup and its normal closure.

\begin{lem}\label{leftidealgenerated}
    Let $(B,+,\circ)$ be a skew brace and let $K$ be a subgroup of $(B,+)$. Then, the strong left ideal generated by $K$ is the subgroup $$\left<\theta_{(a,b)}(K) \mid a,b\in B\right>_+.$$
\end{lem}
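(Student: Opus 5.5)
Set $S=\left<\theta_{(a,b)}(K) \mid a,b\in B\right>_+$. The plan is to check two things: that $S$ is a strong left ideal containing $K$, and that every strong left ideal containing $K$ contains $S$. Together these say $S$ is the smallest strong left ideal containing $K$. Throughout I use the preceding proposition: a subgroup of $(B,+)$ is a strong left ideal if and only if it is $\theta$-invariant.

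\emph{Containment and minimality.} Since $\theta_{(0,1)}=\id_B$ (here $0$ is the additive identity and $1$ the multiplicative one; they coincide in a skew brace), we get $K=\theta_{(0,1)}(K)\subseteq S$. Now let $L$ be any strong left ideal with $K\subseteq L$. By the preceding proposition $L$ is $\theta$-invariant, so $\theta_{(a,b)}(K)\subseteq\theta_{(a,b)}(L)\subseteq L$ for all $a,b\in B$. As $L$ is an additive subgroup, it follows that $S\subseteq L$.

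\emph{$S$ is a strong left ideal.} $S$ is an additive subgroup by construction, so it only remains to show that $S$ is $\theta$-invariant. Each $\theta_{(c,d)}$ is an automorphism of $(B,+)$, so it maps the subgroup generated by a set onto the subgroup generated by the image of that set. Hence
\[
\theta_{(c,d)}(S)=\left<\theta_{(c,d)}\theta_{(a,b)}(K)\mid a,b\in B\right>_+ .
\]
Since $\theta$ is an action of the group $(B,+)\rtimes_\lambda(B,\circ)$, we have $\theta_{(c,d)}\theta_{(a,b)}=\theta_{(c,d)(a,b)}$. The product $(c,d)(a,b)$ is again an element of the semidirect product, so each generator $\theta_{(c,d)}\theta_{(a,b)}(K)$ is one of the sets $\theta_{(a',b')}(K)$ generating $S$. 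Therefore $\theta_{(c,d)}(S)\subseteq S$, and the preceding proposition gives that $S$ is a strong left ideal.

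The only point that needs care is that $\theta$ really is a group action, namely $\theta_{(c,d)}\theta_{(a,b)}=\theta_{(c+\lambda_d(a),\,d\circ b)}$. This follows by a direct computation from the facts that $\lambda$ is a homomorphism and each $\lambda_d$ is an additive automorphism. The paper already asserts this when it introduces $\theta$, so I take it as given. No other step presents a real obstacle.
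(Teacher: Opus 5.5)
Your proof is correct and is essentially the paper's argument. The paper checks $\lambda$-invariance and additive normality of the generated subgroup separately, using $\lambda_a(\theta_{(a_i,b_i)}(K))=\theta_{(\lambda_a(a_i),a\circ b_i)}(K)$ and $c+\theta_{(a_i,b_i)}(K)-c=\theta_{(c+a_i,b_i)}(K)$; these are exactly the cases $(1,a)$ and $(c,1)$ of the action law $\theta_{(c,d)}\theta_{(a,b)}=\theta_{(c,d)(a,b)}$ that you combine with the $\theta$-invariance characterization, and you also spell out $K\subseteq S$ and minimality, which the paper leaves as clear.
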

\begin{proof}
    Clearly, the subgroup $A=\left<\theta_{(a,b)}(K) \mid a,b\in B\right>_+$ is contained in the strong left ideal generated by $K$. Now we need to prove that $A$ is $\lambda$-invariant in $B$ and that it is a normal subgroup of $(B,+)$. In order to do that, let us consider $x\in A$ and $a\in B$. Then $x=x_1+...+x_t$, with $x_i\in\theta_{(a_i,b_i)}(K)$ for some $a_i,b_i\in B$. Thus $\lambda_a(x)=\lambda_a(x_1)+...+\lambda_a(x_t)\in A$, since $\lambda_a(x_i)\in \theta_{(\lambda_a(a_i),a\circ b_i)}(K)$ and 
    \[
    c+x-c=c+x_1-c+c+x_2-c+...+c+x_t-c\in A,
    \]
    since $c+x_i-c\in \theta_{(c+a_i,b_i)}(K)$.
\end{proof}

As \cref{leftidealgenerated} indicated and remarked in \cite{conju}, the elements that are in the same $\theta$-orbit can intuitively be thought of as conjugates. Hence, the following definition can be viewed as an analogue of the $FC$ center in group theory.

\begin{defn}
    Given a skew brace $B$, an element $x\in B$ is a \emph{$\theta_f$-element} if it has a finite $\theta$-orbit and we set
\[
\theta_f(B)=\{b\in B\mid b \text{ is a }\theta_f\text{-element}\}.
\] 
Moreover, if $B=\theta_f(B)$, we call $B$ a \emph{$\theta_f$-skew brace}.
\end{defn}
 Note that each $\theta_f$-element is an element that is $\lambda_f$ and $FC$ in $(B,+)$. Similar to $FC$-groups, we distinguish the class of skew braces where the size of the $\theta$-classes is uniformly bounded.
 
\begin{defn}
    A skew brace $B$ is called 
    $\theta_{Bf}$ if there exists a 
    positive integer $n$ such that
    $|[b]_\theta|\leq n$ for each $b\in B$.
\end{defn}

Note that actually this is not an analog of $FC$-groups, but an extension. 

\begin{exa}
    
Let $(G,\cdot)$ be a group then, $FC(G,\cdot)=\theta_f(G,\cdot,\cdot)=\theta_f(G,\cdot^{op},\cdot)$ and one finds that $G$ is $FC$ if and only if the (almost) trivial skew brace on $G$ is a $\theta_f$-skew brace.
\end{exa}

A large family of non-trivial examples is given by the structure skew braces of finite solutions. This can either be shown with direct computations, or will follow more elegantly by \cite[Theorem 6.6]{MR3974961} from \cref{oversocfinite}.

\begin{exa}
Let $(X,r)$ be a finite solution. Then, the structure skew brace $B(X,r)$ is $\theta_{Bf}$.
\end{exa}

\begin{pro}
\label{theta_f strong left ideal}
   Let $B$ be a skew brace, then $\theta_f(B)=\lambda_f(B) \cap FC(B,+)$ is a strong left ideal. Moreover, if $C$ is a skew brace and $\varphi:B\rightarrow C$ a skew brace epimorphism, then $\varphi(\theta_f(B)) \subseteq \theta_f(C)$.
\end{pro}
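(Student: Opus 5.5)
The plan has three parts: prove the equality $\theta_f(B)=\lambda_f(B)\cap FC(B,+)$ by comparing orbits, show this set is a strong left ideal, and handle epimorphisms separately.

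\textbf{The equality.} For $x\in B$ the $\theta$-orbit satisfies
\[
[x]_\theta=\{a+\lambda_b(x)-a\mid a,b\in B\}=\bigcup_{y\in[x]_\lambda}[y]_+ .
\]
This immediately gives both inclusions.
\begin{itemize}
\item If $x$ is $\theta_f$, then $[x]_\lambda=\theta_{(0,\cdot)}$-images and $[x]_+=\theta_{(\cdot,1)}$-images are subsets of $[x]_\theta$. Hence both are finite, and $x\in\lambda_f(B)\cap FC(B,+)$.
\item Conversely, take $x\in\lambda_f(B)\cap FC(B,+)$. Then $[x]_\lambda$ is finite. Each $y\in[x]_\lambda$ is $\lambda_b(x)$ for some $b$, and $\lambda_b$ is an automorphism of $(B,+)$. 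So $[y]_+=\lambda_b([x]_+)$, which has the same size as $[x]_+$. Thus $[x]_\theta$ is a finite union of finite sets.
\end{itemize}

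\textbf{Strong left ideal.} By \cref{sli}, $\lambda_f(B)$ is a left ideal. It is also a subgroup, and by the group-theoretic fact $FC(B,+)$ is a characteristic subgroup of $(B,+)$. I would verify the subgroup property directly to avoid relying on outside results: for $x,y\in\theta_f(B)$ we have $\theta_{(a,b)}(x-y)=\theta_{(a,b)}(x)-\theta_{(a,b)}(y)$, since $\theta_{(a,b)}$ is an automorphism of $(B,+)$. Hence $[x-y]_\theta\subseteq [x]_\theta-[y]_\theta$ is finite.

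For $\theta$-invariance, note that $\theta$ is an action of the group $(B,+)\rtimes_\lambda(B,\circ)$. Therefore $[\theta_g(x)]_\theta=[x]_\theta$ for every $g$, and $\theta_f(B)$ is $\theta$-invariant. By the preceding proposition (subgroup plus $\theta$-invariant is equivalent to strong left ideal), $\theta_f(B)$ is a strong left ideal.

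\textbf{Epimorphisms.} Let $\varphi\colon B\to C$ be a skew brace epimorphism. It intertwines $\theta$: for $c=\varphi(a)$ and $d=\varphi(b)$ we get $\theta_{(c,d)}(\varphi(x))=\varphi(\theta_{(a,b)}(x))$, because $\varphi$ preserves $+$ and $\lambda$. By surjectivity every pair $(c,d)$ arises this way, so $[\varphi(x)]_\theta=\varphi([x]_\theta)$, which is finite whenever $x\in\theta_f(B)$. This mirrors the proof of \cref{sli}.

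There is no real obstacle. The only step needing care is the union-of-orbits identity, specifically checking that the pairs $(a,b)$ range over all of $B\times B$ and that $\theta$ is indeed an action, so that orbits are invariant.
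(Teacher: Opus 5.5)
Your proof is correct. The equality $\theta_f(B)=\lambda_f(B)\cap FC(B,+)$ via $[x]_\theta=\bigcup_{y\in[x]_\lambda}[y]_+$ is essentially the paper's argument, and so is the epimorphism part. The difference is in how you get the strong left ideal property.

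The paper asserts that both $\lambda_f(B)$ and $FC(B,+)$ are strong left ideals and takes their intersection. You instead show directly that $\theta_f(B)$ is a subgroup and is $\theta$-invariant, since $\theta$ is a group action and so permutes orbits. You then invoke the characterization of strong left ideals as $\theta$-invariant subgroups.

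Your route is the more careful one. \cref{sli} only shows that $\lambda_f(B)$ is a left ideal, and in general $\lambda_f(B)$ is not normal in $(B,+)$. For a counterexample:
\begin{itemize}
\item let $(B,+)$ be free on $x,a,t$;
\item let $\phi\in\Aut(B,+)$ fix $x$ and $t$, with $\phi(a)=x+a$;
\item let $\varepsilon\colon(B,+)\to\mathbb{Z}$ be given by $\varepsilon(t)=1$ and $\varepsilon(x)=\varepsilon(a)=0$;
\item set $u\circ v=u+\phi^{\varepsilon(u)}(v)$.
\end{itemize}
This is a skew brace with $\lambda_u=\phi^{\varepsilon(u)}$. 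Here $x\in\lambda_f(B)$, but the $\lambda$-orbit of $a+x-a$ contains the pairwise distinct elements $nx+a+x-a-nx$ for $n\in\mathbb{Z}$.

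So the intersection argument alone only yields a left ideal. Normality genuinely requires the $\theta$-invariance you prove, or an equivalent argument using the equality. The paper's route is shorter, since the equality is needed anyway. Yours gives a self-contained justification of normality that does not rely on the false claim about $\lambda_f(B)$.
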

\begin{proof}
    As both $\lambda_f(B)$ and $FC(B,+)$ are strong left ideals of $B$, the first claim follows once the equality is shown. First, we remark that clearly any $\theta_f$-element is both $\lambda_f$ and additively $FC$. Hence, it rests to show the reverse inclusion. Let $x \in \lambda_f(B) \cap FC(B,+)$. Denoting $x_1,...,x_k$ the elements of the $\lambda$-class of $x$, we find that $[x]_{\theta} = \cup_{i=1}^k [x_i]_{+}.$ Since $\lambda_f(B) \cap FC(B,+)$ is a strong left ideal, the number of elements in the additive conjugacy class of any $x_i$ is finite, which shows that $|[x]_{\theta}|< \infty$. Hence, the inclusion is shown, which shows the first claim.

    Let $c,a \in C$ and $b\in \theta_f(B)$. Then, there exist $c_1,a_1 \in B$ such that $\varphi(c_1)=c$ and $\varphi(a_1)=a$. As $\varphi$ is a skew brace morphism \begin{align*} \theta_{(c,a)}(\varphi(b))=c+\lambda_a(\varphi(b))-c&=\varphi(c_1)+\lambda_{\varphi(a_1)}(\varphi(b))-\varphi(c_1)\\&=\varphi(c_1+\lambda_{a_1}(b)-c_1)\\ &=\varphi(\theta_{(c_1,a_1)}(b)).\end{align*}
    Hence, $$ [\varphi(b)]_{\theta}=\varphi([b]_{\theta}),$$ which shows the inclusion $\varphi(\theta_f(B)) \subseteq \theta_f(C)$. 
\end{proof}

 In the following result, we prove that the $\theta_f$-elements form an ideal in a two-sided skew brace.

\begin{pro}
    Let $B$ be a two-sided skew brace. Then $\theta_f(B)$ is an ideal.
\end{pro}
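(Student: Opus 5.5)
By \cref{theta_f strong left ideal}, $\theta_f(B)$ is already a strong left ideal of $B$. So the only thing left to prove is that $(\theta_f(B),\circ)$ is normal in $(B,\circ)$. The plan is to use \cref{mult conj in two-sided}: in a two-sided skew brace, for every $c\in B$ the map $\gamma_c\colon x\mapsto \overline{c}\circ x\circ c$ is a skew brace automorphism of $B$. It then suffices to show that every skew brace automorphism maps $\theta_f$-elements to $\theta_f$-elements.

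For this, let $\varphi$ be a skew brace automorphism of $B$, let $x\in\theta_f(B)$, and let $a,b\in B$. Since $\varphi$ is bijective, we can write $a=\varphi(a_1)$ and $b=\varphi(b_1)$. Because $\varphi$ is additive and satisfies $\varphi\lambda_{b_1}=\lambda_{\varphi(b_1)}\varphi$, the computation in the proof of \cref{theta_f strong left ideal} (which only uses that $\varphi$ is a surjective morphism) gives
\[
\theta_{(a,b)}(\varphi(x))=\varphi\big(\theta_{(a_1,b_1)}(x)\big).
\]
Hence $[\varphi(x)]_\theta=\varphi([x]_\theta)$, which is finite. In fact this is simply the epimorphism statement of \cref{theta_f strong left ideal} applied to $\varphi\colon B\to B$, which yields $\varphi(\theta_f(B))\subseteq\theta_f(B)$.

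Applying this with $\varphi=\gamma_c$ for each $c\in B$ gives $\overline{c}\circ\theta_f(B)\circ c\subseteq\theta_f(B)$ for every $c$. Combined with the strong left ideal property, this shows that $\theta_f(B)$ is an ideal.

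There is no substantial obstacle in this argument. The only point to verify is that \cref{mult conj in two-sided} really gives an automorphism in both operations, that is, that conjugation by $c$ is additive because of right distributivity. This is exactly what that remark asserts, so the rest is bookkeeping.
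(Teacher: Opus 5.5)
Your proposal is correct and follows the paper's own proof: you use the strong left ideal property of $\theta_f(B)$, then the fact that multiplicative conjugation $\varphi_c$ is a skew brace automorphism of a two-sided skew brace, and then $[\varphi_c(x)]_\theta=\varphi_c([x]_\theta)$. Your one difference is that you invoke the epimorphism part of the earlier proposition for $\varphi_c\colon B\to B$, whereas the paper writes out the same computation $\theta_{(a,b)}(\varphi_c(x))=\varphi_c\bigl(\theta_{(\varphi_c^{-1}(a),\varphi_c^{-1}(b))}(x)\bigr)$ explicitly; this is only a repackaging.
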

\begin{proof}
    By \cref{theta_f strong left ideal} $\theta_f(B)$ is a strong left ideal.

    It remains to show that $\theta_f(B)$ is a normal subgroup in $(B,\circ)$.
    Let $x\in \theta_f(B)$, $c\in B$ and 
    $\varphi_c$ be the multiplicative conjugation by $c$.
    Since $B$ is two-sided, by \cref{mult conj in two-sided}, we know that $\varphi_c\in\Aut(B,+,\circ)$.
    Hence 
    \[
    \theta_{(a,b)}(c\circ x\circ \overline{c})=
    \theta_{(a,b)}\big(\varphi_c(x))=
    \varphi_c\left(\theta_{\left(\varphi_c^{-1}(a),\varphi_c^{-1}(b)\right)}(x)\right)
    \]
    So
    $[c\circ x\circ \overline{c}]_\theta=
    [\varphi_c(x)]_\theta
    =\varphi([x]_\theta)$ and it is finite.
\end{proof}

Observe that the set of the $\theta_f$-elements of a skew brace is generally not an ideal. 
\begin{exa}
Let $B$ be the skew brace given in \cref{rosita}. Then,
in this case we have 
\begin{align*}
    \Soc(B)&=\ker\lambda=\Z/3\times\Q\times\{0\}\\
\lambda_f(B)&=\Z/3\times \{0\}\times\Z\\
\Ann(B)&=Z(B,\circ)=\{(0,0,0)\}.
\end{align*}

Hence, $\lambda_f(B)=\theta_f(B)$ is not an ideal. Indeed, consider $(a,0,k)\in \lambda_f(B)$ and $(b,y,l)\in B$. Then

\begin{align*}
   \overline{(b,y,l)}\circ (a,0,k)\circ (b,y,l)  &=((-1)^{l+1}b,-2^{-l}y,-l)\circ (a+(-1)^kb,2^ky,k+l)\\
&=((-1)^{l+1}b+(-1)^{-l}(a+(-1)^kb),2^{-l}y(2^k-1),k) \\
&\notin \lambda_f(B) \text{ for } k,y\neq 0.
\end{align*}
\end{exa}

\begin{cor}\label{fgtheta}
    Let $B$ be a skew brace. Then, if $(B,+)$ is finitely generated by $\theta_f$-elements, then $B$ is a $\theta_f$-skew brace. Furthermore, if $B$ is a $\theta_f$-skew brace, then the following are equivalent \begin{itemize}
        \item the skew brace $B$ is finitely generated,
        \item the group $(B,+)$ is finitely generated,
        \item the group $(B,\circ)$ is finitely generated.
    \end{itemize}
\end{cor}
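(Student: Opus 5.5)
The first claim follows from \cref{theta_f strong left ideal}. That result says $\theta_f(B)$ is a strong left ideal, so in particular it is a subgroup of $(B,+)$. Suppose $(B,+)$ is generated by a finite set $\{x_1,\dots,x_t\}$ of $\theta_f$-elements. Every $x_i$ lies in the subgroup $\theta_f(B)$, so $B=\langle x_1,\dots,x_t\rangle_+\subseteq\theta_f(B)$, and $B$ is a $\theta_f$-skew brace. The argument never uses finiteness of the generating set: it only uses that $\theta_f(B)$ is closed under addition and additive inverses.

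For the equivalence, assume $B$ is a $\theta_f$-skew brace. By \cref{theta_f strong left ideal}, $\theta_f(B)=\lambda_f(B)\cap FC(B,+)$, so $B=\theta_f(B)\subseteq\lambda_f(B)$. Hence $B$ is a $\lambda_f$-skew brace, and \cref{fg} applies directly: being finitely generated as a skew brace, having $(B,+)$ finitely generated, and having $(B,\circ)$ finitely generated are equivalent.

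Both steps are short, so I see no real obstacle. The one point to state explicitly is that \cref{fg} needs only the $\lambda_f$ property. The additive $FC$ condition contained in $\theta_f$ is not used for this corollary; it only matters for the later, finer structural results.
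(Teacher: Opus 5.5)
Your proof is correct and is essentially the paper's argument. The paper checks separately that $(B,+)$ is $FC$ and that $\lambda_f(B)=B$, each being a subgroup containing the generators, and then uses $\theta_f(B)=\lambda_f(B)\cap FC(B,+)$; you instead use directly that $\theta_f(B)$ is an additive subgroup, and the second part is the same reduction to \cref{fg} via $\theta_f(B)\subseteq\lambda_f(B)$.
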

\begin{proof}
    If $(B,+)$ is generated by a finite set of $\theta_f$-elements, it follows that it is generated by a finite set of $FC$-elements. Hence, $(B,+)$ is $FC$. Moreover, $(B,+)$ is finitely generated by $\lambda_f$-elements, which entails that $\lambda_f(B)=B$. Hence, $B$ is $\theta_f$. The remainder now follows from \cref{fg}.
\end{proof}

Similarly to how a central-by-finite group is (bounded) $FC$, one finds that a socle-by-finite skew brace is $\theta_{bf}$.

\begin{lem}\label{oversocfinite}
    Let $(B,+,\circ)$ be a skew brace such that $|B:\Soc(B)|$ is finite. Then, $B$ is a $\theta_{Bf}$-skew brace.
\end{lem}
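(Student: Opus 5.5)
We aim to show that every $\theta$-orbit has at most $n^2$ elements, where $n=|B:\Soc(B)|$. Since $\Soc(B)$ is an ideal contained in $\ker\lambda$, \cref{inker} shows that its additive and multiplicative indices coincide, so $n$ is well defined. The idea is to imitate the group-theoretic argument that a central-by-finite group has conjugacy classes bounded by the index of the center, with $\Soc(B)$ in the role of the center.

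First choose a transversal $x_1,\dots,x_n$ of $\Soc(B)$ in $(B,+)$ and a transversal $y_1,\dots,y_n$ of $\Soc(B)$ in $(B,\circ)$. By \cref{inker} the cosets agree, so one could use a single transversal, but this is not needed. Let $(a,b)\in (B,+)\rtimes_\lambda(B,\circ)$. Write $a=x_i+s$ with $s\in\Soc(B)$, and $b=y_j\circ t$ with $t\in\Soc(B)$.

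For any $c\in B$ we then compute $\lambda_b(c)=\lambda_{y_j}(\lambda_t(c))=\lambda_{y_j}(c)$, because $t\in\ker\lambda$. Since $s\in Z(B,+)$, we also get
\[
a+\lambda_b(c)-a = x_i+s+\lambda_{y_j}(c)-s-x_i = x_i+\lambda_{y_j}(c)-x_i .
\]
Hence $\theta_{(a,b)}(c)=\theta_{(x_i,y_j)}(c)$, and therefore $[c]_\theta\subseteq\{\theta_{(x_i,y_j)}(c)\mid 1\le i,j\le n\}$. This set has at most $n^2$ elements, so $B$ is $\theta_{Bf}$ with uniform bound $n^2$.

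The only point needing care is that both halves of the definition of the socle are used: $\ker\lambda$ absorbs the multiplicative part of the decomposition, and the additive centrality absorbs the additive part. The order of the factors in the decompositions also matters. We must write $b=y_j\circ t$ so that $\lambda_b=\lambda_{y_j}\lambda_t$ and $\lambda_t$ acts first on $c$. Since $\ker\lambda$ is normal in $(B,\circ)$, the alternative order $t\circ y_j$ also works. No further obstacle is expected.
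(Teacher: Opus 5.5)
Your proof is correct and follows essentially the same argument as the paper. In both, $a$ is decomposed modulo an additively central subgroup and $b$ modulo a subgroup of $\ker\lambda$, so that $\theta_{(a,b)}(c)$ depends only on the two coset representatives. The paper uses $Z(B,+)$ and $\ker\lambda$ separately and obtains the bound $|(B,+):Z(B,+)|\cdot|(B,\circ):\ker\lambda|$, while you use $\Soc(B)$ for both and obtain $n^2$. Also, the order $t\circ y_j$ works simply because $\lambda_t=\id$, so normality is not needed there.
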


\begin{proof}
    By our hypothesis $|(B,\circ):\ker\lambda|$ and $|(B,+):Z(B,+)|$ are finite, thus there exist two positive integers $n$ and $m$ such that $|(B,\circ):\ker\lambda|=n$ and $|(B,+):Z(B,+)|=m$. Then there exist $x_1,...,x_n,y_1,...,y_m\in B$ such that $$B=\bigcup_{i=1}^n\ker\lambda\circ x_i=\bigcup_{j=1}^mZ(B,+)+y_j.$$ Let $a,b,c$ be elements of $B$. Then there exist $b_1\in\ker\lambda$, $a_1\in Z(B,+)$, $i\in\{1,...,n\}$ and $j\in\{1,...,m\}$ such that $a=a_1+y_j$ and $b=b_1\circ x_i$. It follows that 
    $$\theta_{(a,b)}(c)=\theta_{(a_1+y_j,b_1\circ x_i)}(c)=y_j+\lambda_{x_i}(c)-y_j.$$ 
    So 
    \[ |[c]_\theta|=|\{y_j+\lambda_{x_i}(c)-y_j\mid i\in\{1,...,n\}\text{ and }j\in\{1,...,m\}\}|\leq mn,
    \]
   which proves the result.
\end{proof}

In the following result, we will show that the converse holds for braces of finitely generated-type.

\begin{lem}
\label{thetafg}
    Let $(B,+,\circ)$ be a skew brace such that the additive group $(B,+)$ is generated by a finite set of $\theta_f$-elements $\{x_1,...,x_t\}$. Then, $|B:\Soc(B)|$ is finite. Moreover, if for all $i$ one has that $|[x_i]_{\theta}| \leq k$, then $|B:\Soc(B)| \leq k^{t}.$
\end{lem}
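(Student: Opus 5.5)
The plan mirrors \cref{lamf}, with the $\lambda$-action replaced by the $\theta$-action of $G=(B,+)\rtimes_\lambda(B,\circ)$. Set $H=\bigcap_{i=1}^t \Stab_\theta(x_i)\le G$. Each $\Stab_\theta(x_i)$ has index $|[x_i]_\theta|\le k$ in $G$ by the Orbit--Stabilizer theorem, so $|G:H|\le k^t$.

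Since $\{x_1,\dots,x_t\}$ generates $(B,+)$ and every $\theta_{(a,b)}$ is an automorphism of $(B,+)$, the argument of \cref{lamf} gives that $H$ is exactly the kernel of $\theta$. Concretely, $(a,b)\in H$ if and only if $a+\lambda_b(c)-a=c$ for all $c\in B$.

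Next we identify $\Soc(B)$ through $H$. Writing $\iota_1(a)=(a,1)$ and $\iota_2(b)=(1,b)$, we have $\theta_{(a,1)}(c)=a+c-a$ and $\theta_{(1,b)}=\lambda_b$. Hence
\[
\iota_1^{-1}(H)=Z(B,+),\qquad \iota_2^{-1}(H)=\ker\lambda .
\]
Since $\iota_1$ and $\iota_2$ are group embeddings of $(B,+)$ and $(B,\circ)$ into $G$, we get $|(B,+):Z(B,+)|\le k^t$ and $|(B,\circ):\ker\lambda|\le k^t$. In particular $|B:\Soc(B)|$ is finite, because $\Soc(B)=\ker\lambda\cap Z(B,+)$. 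By \cref{inker} the additive and multiplicative indices of $\Soc(B)$ coincide, so we may count cosets in either group. Intersecting the two subgroups directly gives only the crude bound $k^{2t}$.

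The main obstacle is sharpening this to $k^t$. First I would try a single map whose fibres are the cosets of $\Soc(B)$. For $b\in B$ set $F(b)=(\theta_{(b,b)}(x_i))_{i=1}^t\in\prod_i[x_i]_\theta$, a set of size at most $k^t$.

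The map $b\mapsto(b,b)$ is a homomorphism $(B,\circ)\to G$, since $(b,b)(d,d)=(b+\lambda_b(d),b\circ d)=(b\circ d,b\circ d)$. For $s\in\Soc(B)$ we have $b\circ s=b+s$, $\lambda_{b\circ s}=\lambda_b$, and $s$ is additively central, so $F$ is constant on the cosets $b\circ\Soc(B)$. It therefore induces a map from $B/\Soc(B)$ into a set of size $\le k^t$.

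The difficulty is injectivity. Equality $F(b)=F(b')$ only forces $\overline{b}\circ b'$ into the preimage of $H$ under the diagonal map, which a priori contains $\Soc(B)$ properly. To close the gap I would combine $F$ with the information that elements of $H$ of the form $(a,1)$ or $(1,b)$ already force $a\in Z(B,+)$, respectively $b\in\ker\lambda$. If that fails to yield injectivity, I would fall back on the finiteness statement together with the bound $k^{2t}$ obtained above, and check whether a finer count of the $\theta$-orbits recovers $k^t$.
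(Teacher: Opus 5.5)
Your finiteness argument and your bound $|B:\Soc(B)|\le k^{2t}$ are correct. In substance they follow the paper's route: the paper bounds $|(B,\circ):\ker\lambda|$ (via \cref{lamf}) and $|(B,+):Z(B,+)|$ (via additive conjugacy classes) separately and multiplies. Passing through $H=\ker\theta$ in $G=(B,+)\rtimes_\lambda(B,\circ)$ repackages that count. Before intersecting, make explicit that both indices live in one group. You can do this with \cref{inker} applied to $\ker\lambda$, which is a sub skew brace on which $+$ and $\circ$ agree, or with \cref{inv} applied to the left ideal $Z(B,+)$.

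The genuine gap is the sharpening to $k^t$. Neither your map $F$ nor any finer orbit count can close it, because that bound is false. The paper reaches $k^t$ via the identity $\prod_i|[x_i]_\lambda|\prod_i|[x_i]_+|=\prod_i|[x_i]_\theta|$. This identity already fails for an almost trivial skew brace on a non-abelian group, where $[x]_\lambda=[x]_+=[x]_\theta$ is the conjugacy class of $x$.

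Here is a counterexample to the bound itself.
\begin{itemize}
\item Setup: let $N=C_2\times S_3$ with $C_2=\langle u\rangle$, $\tau=(12)$, $\sigma=(123)$. Let $f\colon N\to N$ be the endomorphism $f(u^\epsilon,h)=(1,\tau^\epsilon)$, so $f^2$ is trivial.
\item Structure: put $(B,+)=N$ (written multiplicatively), $\lambda_a(b)=f(a)\,b\,f(a)^{-1}$ and $a\circ b=a\,\lambda_a(b)$.
\item Skew brace check: since $f(\lambda_a(b))=f(b)$, we get $\lambda_{a\circ b}=\lambda_a\lambda_{\lambda_a(b)}=\lambda_a\lambda_b$, which gives associativity. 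Each $\lambda_a\in\Aut(N)$ gives $a\circ(bc)=(a\circ b)a^{-1}(a\circ c)$. So $B$ is a skew brace of order $12$.
\item Socle: $\ker\lambda=1\times S_3$ and $Z(B,+)=C_2\times 1$, so $\Soc(B)$ is trivial and $|B:\Soc(B)|=12$.
\item Orbits: every $\lambda_b$ is inner, so the $\theta$-orbits are the conjugacy classes of $N$. The generators $x_1=(u,\sigma)$ and $x_2=(1,\tau)$ of $(B,+)$ satisfy $|[x_1]_\theta|=2$ and $|[x_2]_\theta|=3$. They generate $N$ because $x_1^3=(u,1)$ and $x_1^4=(1,\sigma)$.
\item Conclusion: with $t=2$ and $k=3$ we get $|B:\Soc(B)|=12>9=k^t$, and even $12>\prod_i|[x_i]_\theta|=6$.
\end{itemize}

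The underlying reason is this. $\prod_i|[x_i]_\theta|$ only bounds $|G:H|$, the order of the group generated by the inner automorphisms of $(B,+)$ together with the $\lambda_b$. By contrast, $|B:\Soc(B)|$ can reach $|(B,+):Z(B,+)|\cdot|(B,\circ):\ker\lambda|$ when some $\lambda_b$ are inner; in the example these are $6$ and $12$.

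The correct quantitative statement is $|B:\Soc(B)|\le\prod_i|[x_i]_\lambda|\prod_i|[x_i]_+|\le k^{2t}$, which is exactly your fallback. The finiteness claim of the lemma is unaffected.
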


\begin{proof}
    By \cref{inker} and \cref{lamf} it results that $$|(B,+):\ker\lambda| =|(B,\circ):\ker\lambda|\leq \prod_{i=1}^t|[x_i]_{\lambda}|.$$ Moreover, since each $\theta_f$-element of $B$ is an $FC$-element of $(B,+)$ we have that $|(B,+):Z(B,+)|\leq \prod_{i=1}^t|[x_i]_{+}|$. Hence, the statement follows as \begin{align*} | B : \Soc(B)| &\leq |(B,+):\ker \lambda| |(B,+):Z(B,+)|\\  
    &\leq \prod_{i=1}^t|[x_i]_{\lambda}| \prod_{i=1}^t|[x_i]_{+}|\\ 
    &=\prod_{i=1}^t|[x_i]_{\theta}|\\
    &\leq k^t.\qedhere\end{align*} 
\end{proof}

Note that the notion of $\theta_f$-elements of a skew brace $B$, by definition, corresponds to the finiteness of its orbit under the $\theta$-action of $(B,+)\rtimes_{\lambda}(B,\circ)$ on $(B,+)$. However, it is natural to consider when elements of $(B,+)\rtimes_{\lambda}(B,\circ)$ are $FC$. This led to the introduction of $(s)$-elements by Colazzo, Ferrara and Trombetti \cite{s}. 

\begin{defn}
    Let $B$ be a skew  brace. Then an element $x \in B$ is an $(s)$-element, if $$ |\left\lbrace g*x ,x*g, \overline{g} \circ x \circ g, -g+x+g \mid g \in B\right\rbrace| < \infty.$$
\end{defn}
Note that every $(s)$-element is necessarily $\theta_f$ and an $FC$-element of $(B,\circ)$. 
\begin{thm}
    Let $(B,+,\circ)$ be a skew  brace and let $x\in \ker\lambda$. Then, if $x$ is a $\theta_f$-element of $B$, $x$ is an $FC$-element of $(B,\circ)$. In particular, $x$ is an $(s)$-element.  
\end{thm}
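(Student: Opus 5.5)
Since $x\in\ker\lambda$, we have $\lambda_x=\id$. We must show that the multiplicative conjugates $\overline{g}\circ x\circ g$, for $g\in B$, form a finite set, and then check the remaining conditions defining an $(s)$-element. The plan is to express each multiplicative conjugate of $x$ as an element of the $\theta$-orbit of $x$.

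For $g\in B$, write $g\circ y=g+\lambda_g(y)$. Using $\lambda_x=\id$, first compute
\[
x\circ g = x+\lambda_x(g)=x+g.
\]
Then
\[
\overline{g}\circ x\circ g=\overline{g}\circ(x+g)=\overline{g}\circ x-\overline{g}+\overline{g}\circ g=\overline{g}\circ x-\overline{g},
\]
and $\overline{g}\circ x=\overline{g}+\lambda_{\overline{g}}(x)$. Hence
\[
\overline{g}\circ x\circ g=\overline{g}+\lambda_{\overline{g}}(x)-\overline{g}=\theta_{(\overline{g},\overline{g})}(x)\in[x]_\theta .
\]
So the multiplicative conjugacy class $[x]_\circ$ is contained in $[x]_\theta$, which is finite by hypothesis. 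Therefore $x$ is an $FC$-element of $(B,\circ)$.

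For the ``in particular'' part, we must bound the sets $\{g*x\}$, $\{x*g\}$ and $\{-g+x+g\}$.
\begin{itemize}
\item Since $g*x=\lambda_g(x)-x$, these elements are indexed by the finite orbit $[x]_\lambda\subseteq[x]_\theta$.
\item Since $-g+x+g=\theta_{(-g,0)}(x)$, where $0$ is the common identity of both groups, these lie in $[x]_\theta$.
\item Since $x\in\ker\lambda$, we have $x*g=\lambda_x(g)-g=0$.
\end{itemize}
All four sets in the definition of an $(s)$-element are therefore finite, so $x$ is an $(s)$-element.

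The only delicate step is the skew-distributivity manipulation in $\overline{g}\circ(x+g)$: it relies on $x\circ g=x+g$, which holds precisely because $\lambda_x=\id$. Once that is in place, the rest is routine.
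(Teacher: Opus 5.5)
Your proof is correct and follows the paper's argument. The paper computes $a\circ x\circ\overline{a}=a+\lambda_a(x)-a=\theta_{(a,a)}(x)$, which is your identity with $a=\overline{g}$, and concludes $[x]_\circ\subseteq[x]_\theta$; your explicit check of the remaining $(s)$-conditions (in particular $x*g=0$ because $\lambda_x=\id$) fills in what the paper leaves implicit.
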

\begin{proof}
        Let us consider $a\in B$ and $x\in \ker\lambda$. Then 
        \[
        a\circ x \circ \overline{a}
        =a+\lambda_a(x)-a
        =\theta_{(a,a)}(x).
        \]
        Thus $[x]_\circ=\{\theta_{(a,a)}(x) \ | \ a\in B\}\subseteq [x]_\theta$
        and the statement is proved.
\end{proof}
One also finds that $(s)$-elements correspond to elements in the $FC$ center of $(B,\circ)$ for $\theta_f$-skew braces.

\begin{lem}
\label{bound x*g}
Let $B$ be a $\theta_{f}$-skew brace such that 
$(B,\circ)$ is an $FC$-group. Then, for any $x\in B$, the number of elements of the form $x*g$ is finite. 

Moreover, if for every $b\in B$ it holds that $|[b]_\circ |\leq n$, i.e. $B$ is $\theta_{bf}$ and for every $x\in B$ one has that $|[x]_\theta|\leq m$, i.e. $(B,\circ)$ is BFC, then 
  \[
  |\left\lbrace x*g \mid g \in B\right\rbrace|<mn.
  \]
\end{lem}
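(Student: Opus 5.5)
Our approach is to write $x*g$ using only multiplicative conjugates of $x$ and elements of the $\theta$-orbit of $g^{-1}$-type expressions, so that the count factors through finitely many choices. By definition $x*g=\lambda_x(g)-g$, and $\lambda_x(g)=-x+x\circ g$. We also use $x\circ g = g\circ (\overline{g}\circ x\circ g)$, so
\[
x*g=-x+g\circ(\overline{g}\circ x\circ g)-g = -x+g+\lambda_g(\overline{g}\circ x\circ g)-g .
\]
Setting $y=\overline{g}\circ x\circ g\in[x]_\circ$, we get
\[
x*g=-x+\theta_{(g,g)}(y).
\]
Thus $x*g$ is determined by an element of the set $\bigcup_{y\in[x]_\circ}[y]_\theta$.

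Since $(B,\circ)$ is $FC$, $[x]_\circ$ is finite. Since $B$ is $\theta_f$, each $[y]_\theta$ is finite. Hence $\{x*g\mid g\in B\}\subseteq -x+\bigcup_{y\in[x]_\circ}[y]_\theta$ is finite, which proves the first claim.

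For the bound, we use the same inclusion. We have at most $n$ choices of $y\in[x]_\circ$, and each $[y]_\theta$ has at most $m$ elements, so $|\{x*g\}|\le mn$. (The statement's hypotheses appear to have $m,n$ swapped in their glosses, but the product bound is symmetric.)

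The main obstacle is the strict inequality $<mn$ as stated. Equality could only occur if the sets $[y]_\theta$ for the different $y\in[x]_\circ$ were pairwise disjoint and each of full size $m$. We would try to rule this out by noting that $y=x$ (taking $g=1$) contributes $\theta_{(1,1)}(x)=x$, so $0=x*1$ is always attained. Still, strictness need not hold, for instance when $x=0$ and $m=n=1$. We therefore expect to establish the non-strict bound $\le mn$ and treat strictness as a likely typo.

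Beyond this, the only key step is the rewriting $x\circ g=g\circ(\overline g\circ x\circ g)$ together with $a\circ b=a+\lambda_a(b)$. We expect this to be routine.
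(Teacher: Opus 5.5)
Your proof is correct and is essentially the paper's argument: both rewrite $x*g=-x+\theta_{(g,g)}(\overline{g}\circ x\circ g)$ and bound $\{x*g\mid g\in B\}$ by $\bigl|\bigcup_{y\in[x]_\circ}[y]_\theta\bigr|\le mn$. You are also right about strictness. The paper's proof likewise only gives $\le mn$, and equality occurs for a trivial skew brace on an abelian group with $m=n=1$, where $\{x*g\mid g\in B\}=\{0\}$.
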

\begin{proof}
For every $g\in B$
     \begin{align*}
        x*g &= -x+x \circ g - g\\ &=-x + g \circ (\overline{g} \circ x \circ g) -g \\
        &= -x+g-g+g\circ (\overline{g} \circ x \circ g) - g \\ &= -x+g+\lambda_g(\overline{g} \circ x \circ g)-g.
    \end{align*}

    Hence 
    \begin{align*}
        \{x*g\mid g\in B\}&
        =\{x+g+\lambda_g(\overline{g} \circ x \circ g)-g\mid g\in B\}\\
        &=\{x+g+\lambda_g(y)-g\mid y\in[x]_\circ , g\in B\}\\
        &=\{x+\theta_{g,g}(y)\mid y\in[x]_\circ, g\in B\}\\
        &=\{x+z\mid z\in[y]_\theta, y\in[x]_\circ\}
    \end{align*}
    and 
    \[
    |\{x*g\mid g\in B\}|\leq
    \left|\{z\mid z\in[y]_\theta, y\in[x]_\circ\}\right|=
    \left|\bigcup_{y\in [x]_\circ}[y]_\theta\right|<\infty,
    \]
    where the last inequality follows as it is a finite union of finite sets. Moreover, if for every $b\in B$ one has that $|[b]_\circ |\leq n$ and for every $x\in B$ one has that $|[x]_\theta|\leq m$, then the last inequality can be sharpened to
  $$ \left|\bigcup_{y\in [x]_\circ}[y]_\theta\right|\leq mn, $$
  which shows the result.
\end{proof}

\begin{pro}
\label{S = thetaf and mult FC}
    Let $B$ be a $\theta_f$-skew brace. Then, an element $x \in B$ is contained in $FC(B,\circ)$ if and only if $x$ is an $(s)$-element. Moreover, $B$ is an $(S)$-skew brace if and only if $B$ is a $\theta_f$-skew brace with $(B,\circ)$ an $FC$ group.
\end{pro}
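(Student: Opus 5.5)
The plan is to prove both directions of the first claim directly from the definition of an $(s)$-element, and then obtain the second claim by applying the first to every element. Fix $x\in B$.

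If $x$ is an $(s)$-element, then by definition the set $\{\overline{g}\circ x\circ g\mid g\in B\}$ is finite. This set is exactly $[x]_\circ$ (replace $g$ by $\overline{g}$), so $x\in FC(B,\circ)$. This direction needs no hypothesis on $B$.

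Conversely, assume $x\in FC(B,\circ)$. We must show that the four families $g*x$, $x*g$, $\overline{g}\circ x\circ g$ and $-g+x+g$ ($g\in B$) are each finite.
\begin{itemize}
\item The multiplicative conjugates are finite by assumption.
\item The additive conjugates $-g+x+g=\theta_{(-g,0)}(x)$ lie in $[x]_\theta$, which is finite since $B$ is $\theta_f$.
\item For $g*x=\lambda_g(x)-x$, the set $\{g*x\mid g\in B\}$ equals $\{y-x\mid y\in[x]_\lambda\}$. Since $[x]_\lambda\subseteq[x]_\theta$, it is finite.
\item For $x*g$, reuse the computation in the proof of \cref{bound x*g}, which did not use that every element of $(B,\circ)$ is $FC$, only that $x$ is. It gives
\[
\{x*g\mid g\in B\}\subseteq\left\{-x+z\;\middle|\; z\in\bigcup_{y\in[x]_\circ}[y]_\theta\right\},
\]
and the right-hand side is finite because $[x]_\circ$ is finite and each $[y]_\theta$ is finite, $B$ being $\theta_f$.
\end{itemize}
Hence $x$ is an $(s)$-element.

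The only point needing care is the last item. The proof of \cref{bound x*g} is stated under the global hypothesis that $(B,\circ)$ is $FC$, so I will re-check that the chain of equalities there holds pointwise for a single $x$ with finite $[x]_\circ$. I will also correct the sign: the lemma writes $x+z$, but the computation yields $-x+z$. Neither affects finiteness.

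For the second claim, $B$ is an $(S)$-skew brace exactly when every element is an $(s)$-element. Suppose first that $B$ is $(S)$. Every $(s)$-element is $\theta_f$: its $\lambda$-orbit $\{g*x+x\}$ and its additive conjugacy class are finite, so $x\in\lambda_f(B)\cap FC(B,+)=\theta_f(B)$ by \cref{theta_f strong left ideal}. Every $(s)$-element is also in $FC(B,\circ)$ by the first direction above. So $B$ is $\theta_f$ and $(B,\circ)$ is $FC$. Conversely, if $B$ is $\theta_f$ and $(B,\circ)$ is $FC$, the first claim applied to each $x\in B$ shows that every element is an $(s)$-element, so $B$ is $(S)$.
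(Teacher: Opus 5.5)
Your proposal is correct and follows essentially the same route as the paper. The easy direction is read off from the definition. For the converse, the sets $\{g*x\mid g\in B\}$, $[x]_+$ and $[x]_\circ$ are bounded directly, and $\{x*g\mid g\in B\}$ is controlled through the computation in \cref{bound x*g}. Your two remarks about that computation are accurate: it only needs $|[x]_\circ|<\infty$ together with every multiplicative conjugate of $x$ having finite $\theta$-orbit (which holds since $B$ is $\theta_f$), and the correct expression is $-x+z$ rather than $x+z$. Your explicit treatment of the ``moreover'' part, showing that an $(S)$-skew brace is automatically $\theta_f$, supplies a step the paper leaves implicit.
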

\begin{proof}
    First, let $x \in B$ be an $(s)$-element, then by definition $x \in FC(B,\circ)$.

    Second, let $x \in FC(B,\circ)$ and let $n\in \N$ such that
    $|[x]_\circ |\leq n$. 
    Moreover, since $B$ is $\theta_f$, there is $m\in \N$ such that
    $|[x]_\theta|\leq m$.
    Then, by \cref{bound x*g},
    \[
      |\left\lbrace x*g \mid g \in B\right\rbrace|<mn.
    \]
    As $x$ is a $\theta_f$-element and only has finitely many multiplicative conjugates, it follows that the set $\left\lbrace g*x,x*g,-g+x+g, \overline{g} \circ x \circ g\right\rbrace $ only has finitely many elements. Thus, $x$ is an $(s)$-element, which shows the result.
\end{proof}

Examples of $\theta_f$-skew braces that are not $(S)$ are legion.

\begin{exa}
Let $CD_{\infty}$ be the skew brace defined in \cref{es1}. Then, $\CD_\infty$ is a $\theta_f$-skew brace,
$\Soc(\CD_\infty)=\ker\lambda=2\Z$ has finite index, but $Z(\CD_\infty,\circ)=\{0\}$, 
so $\Ann(\CD_\infty)=\{0\}$ doesn't have finite index, so $\CD_\infty$ is not an $(S)$-brace.
\end{exa}

Moreover, structure groups of finite involutive solutions provide a family of examples. Indeed, for any finite set $X$ there is a unique involutive solution for which the structure skew brace has property $(S)$.

\begin{exa}
    Let $(X,r)$ be a finite involutive solution. Suppose that the natural skew brace structure on $G(X,r)$ is $(S)$. Then, the group $G(X,r)$ is torsion-free and central-by-finite, which shows it is free abelian. Hence, by \cite{MR3974961}, the solution $(X,r)$ is the flip solution.
\end{exa}

As one may expect from the equivalence of $(S)$ with $\theta_f$-skew braces with an $FC$-group as multiplicative group, this equivalence extends to the bounded versions.
\begin{pro}
    Let $B$ be a skew brace.
    Then $B$ is $(BS)$ if and only if $B$ is a $\theta_{Bf}$-skew brace and 
    $(B,\circ)$ is a BFC-group.
\end{pro}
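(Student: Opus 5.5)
We need the definition of (BS): presumably a skew brace in which there is a uniform bound $N$ with $|\{g*x, x*g, \overline{g}\circ x\circ g, -g+x+g \mid g\in B\}|\le N$ for all $x\in B$. We mirror \cref{S = thetaf and mult FC}, keeping track of the bounds.

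For the forward direction, assume $B$ is $(BS)$ with bound $N$. Then $|[x]_\circ|\le N$ for every $x$, so $(B,\circ)$ is BFC. For the $\theta$-classes, note that $\lambda_g(x)=g*x+x$, so $|[x]_\lambda|\le |\{g*x\mid g\in B\}|\le N$. Also $|[x]_+|\le N$, since $[x]_+$ is in bijection with $\{-g+x+g\mid g\in B\}$ via $g\mapsto -g$. As in the proof of \cref{theta_f strong left ideal}, we have
\[
[x]_\theta=\bigcup_{y\in[x]_\lambda}[y]_+ .
\]
Each $y\in[x]_\lambda$ is itself an element of $B$, so the uniform bound gives $|[y]_+|\le N$. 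Hence $|[x]_\theta|\le N^2$, and $B$ is $\theta_{Bf}$.

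For the converse, assume $|[b]_\theta|\le m$ and $|[b]_\circ|\le n$ for all $b\in B$. Since $[x]_+\subseteq[x]_\theta$ (take $b=1$) and $\lambda_g(x)\in[x]_\theta$, each of the following sets has at most $m$ elements:
\[
\{-g+x+g\mid g\in B\},\qquad \{g*x=\lambda_g(x)-x\mid g\in B\}.
\]
Multiplicative conjugates contribute at most $n$ elements. For $x*g$, \cref{bound x*g} with exactly these hypotheses gives
\[
|\{x*g\mid g\in B\}|\le mn,
\]
since its proof shows this set is $x+\bigcup_{y\in[x]_\circ}[y]_\theta$. So the total set has at most $2m+n+mn$ elements, independently of $x$, and $B$ is $(BS)$.

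The main point to check is that all bounds are uniform: in the forward direction, the bound on $[y]_+$ must hold for every $y$ in the $\lambda$-orbit, which it does because the $(BS)$ bound applies to all elements. In the converse, the $x*g$ estimate is where the BFC hypothesis on $(B,\circ)$ is genuinely needed, and \cref{bound x*g} supplies it directly.
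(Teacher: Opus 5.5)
Your proof is correct and follows the paper's argument essentially step for step. The forward direction uses $[x]_\theta=\bigcup_{y\in[x]_\lambda}[y]_+$ to get the $N^2$ bound, and the converse uses \cref{bound x*g}, reaching the same total bound $2m+n+mn$; the identity in that lemma's proof actually gives the inclusion $\{x*g\mid g\in B\}\subseteq -x+\bigcup_{y\in[x]_\circ}[y]_\theta$ (with $-x$, not an equality), but only the count $\le mn$ matters.
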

\begin{proof}
Given a skew brace $B$ and $x\in B$, 
we denote 
\[
B_x=\{g*x,x*g,-g+x+g, \overline{g} \circ x \circ g\}.
\]

First, suppose that $B$ is $(BS)$.
Then, there exists $n\in\N$ such that
\[
|B_x|\leq n,
\]
for every $x\in B$. As 
$[x]_\theta=\bigcup_{b\in B}[\lambda_b(x)]_+$, one finds that $|[x]_\theta|\leq n^2$. Thus $B$ is $\theta_{bf}$. As $[x]_\circ\subseteq B_x$,
it follows directly that $|[x]_\circ|\leq n$ for every $x\in B$.
Hence, $(B,\circ)$ is a BFC-group.

Vice versa, suppose that $B$ is $\theta_{Bf}$ and $(B,\circ)$ is a BFC-group.
Let $m,n\in \N$ such that $|[x]_\circ |\leq n$ and $|[x]_\theta|\leq m$.
Then, by \cref{bound x*g}, 
\[
|\left\lbrace x*g \mid g \in B\right\rbrace|<mn.
\]
Since 
\[
B_x=\{g*x \mid g \in B \}\cup\{x*g \mid g \in B\}\cup [x]_+\cup [x]_\circ
\]
 and $ |\{g*x \mid g \in B \}|=|[x]_\lambda|\leq [x]_\theta\leq m$,
\[
|B_x|\leq m+mn+|[x]_+|+|[x]_\circ|\leq m+mn+m+n,
\]
for all $x\in B$.
Therefore $B$ is $(BS)$.
\end{proof}

The proof of \cref{S = thetaf and mult FC} is highly based on the fact that the multiplicative conjugates of a $\theta_f$-element are themselves $\theta_f$, which holds in a $\theta_f$-skew brace, but not for $\theta_f$-elements in an arbitrary skew brace. Thus, the following is a natural question.
\begin{question}
    Let $B$ be a skew brace and $x \in B$. If $x \in FC(B,+) \cap FC(B,\circ) \cap \lambda_f(B)$, is then $x$ an $(s)$-element? The converse clearly holds.
\end{question}
As the $\lambda$-action on an element of the socle of a skew brace corresponds to multiplicative conjugation, a link between $\lambda_f$-elements and $FC$-elements of $(B,\circ)$ is found. 
\begin{thm}
    Let $(B,+,\circ)$ be a skew  brace and let $x$ be an element of $\Soc(B)$. Then $x$ is an $FC$-element of $(B,\circ)$ if and only if $x$ is a $\lambda_f$-element. In particular, every element in $\Soc(B)$ is a $\lambda_f$-element if and only if $(\Soc(B),\circ)$ is contained in the $FC$-center of $(B,\circ)$. Moreover, any $\lambda_f$-element of $\Soc(B)$ is an $(s)$-element of $B$.
\end{thm}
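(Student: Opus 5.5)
The plan is to show that for $x\in\Soc(B)$ the $\lambda$-orbit of $x$ coincides with its multiplicative conjugacy class, i.e. $[x]_\lambda=[x]_\circ$. All three claims then follow quickly. The computation proceeds in two steps.

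First, since $x\in\ker\lambda$, we have $x\circ b=x+\lambda_x(b)=x+b$ for every $b\in B$.

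Second, for arbitrary $a\in B$ we have
\[
a\circ x\circ\overline{a}=a+\lambda_a(x\circ\overline{a})=a+\lambda_a(x+\overline{a})=a+\lambda_a(x)+\lambda_a(\overline{a}).
\]
We use $\lambda_a(\overline{a})=-a+a\circ\overline{a}=-a$. Since $x\in Z(B,+)$ and $\lambda_a$ is an additive automorphism, $\lambda_a(x)\in Z(B,+)$ as well. Hence
\[
a\circ x\circ\overline{a}=a+\lambda_a(x)-a=\lambda_a(x).
\]
This identity is the key computation; everything else is bookkeeping.

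With this in hand, the first equivalence is immediate: $[x]_\circ=[x]_\lambda$, so one is finite if and only if the other is. The ``in particular'' statement follows by applying this to each element of $\Soc(B)$. Being an $FC$-element of $(B,\circ)$ for all $x\in\Soc(B)$ means exactly that $(\Soc(B),\circ)\subseteq FC(B,\circ)$.

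For the final claim, take a $\lambda_f$-element $x\in\Soc(B)$ and check the four sets in the definition of an $(s)$-element.

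1. Since $x$ is additively central, $[x]_+=\{x\}$.
2. By the identity above, $[x]_\circ=[x]_\lambda$ is finite.
3. The set $\{g*x\}=\{\lambda_g(x)-x\}$ is in bijection with $[x]_\lambda$, hence finite.
4. The set $\{x*g\}=\{\lambda_x(g)-g\}=\{0\}$, because $x\in\ker\lambda$.

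All four sets are finite, so $x$ is an $(s)$-element. Alternatively, $x$ is $\theta_f$, since $[x]_\theta=[x]_\lambda$ by additive centrality, and the earlier theorem on $\theta_f$-elements of $\ker\lambda$ applies.

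The main obstacle is only the sign and centrality bookkeeping in the identity $a\circ x\circ\overline{a}=\lambda_a(x)$. In particular, one must justify that $\lambda_a(x)$ stays central, which holds because $\lambda_a\in\Aut(B,+)$.
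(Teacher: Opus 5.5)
Your proof is correct and follows the paper's argument. The key identity $a\circ x\circ\overline{a}=a+\lambda_a(x)+\lambda_a(\overline{a})=a+\lambda_a(x)-a=\lambda_a(x)$ for $x\in\Soc(B)$, which gives $[x]_\circ=[x]_\lambda$, is exactly the paper's computation, and your explicit check of the four sets in the definition of an $(s)$-element (using $[x]_+=\{x\}$ and $x*g=0$) fills in a step the paper leaves implicit.
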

\begin{proof}
    Let $x$ be an element of $\Soc(B)$ and let $a$ be an element of $B$.
    Then 
    \[
    a\circ x\circ\overline{a} 
    =a\circ (x+\lambda_x(\overline{a}))
    =a+\lambda_a(x)+\lambda_a(\overline{a})
    =a+\lambda_a(x)-a=\lambda_a(x).
    \]

    Thus $[x]_\circ=[x]_\lambda$ and the statement is proved.
\end{proof}

Any left ideal of a skew brace $B$ contained in $\Soc(B)$ is an ideal. Hence, we note the following.

\begin{cor}
    The set $\Soc(B)\cap \lambda_f(B)=\Soc(B)\cap \theta_f(B)$ is an ideal of $B$ consisting of $(s)$-elements.
\end{cor}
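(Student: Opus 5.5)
The statement has three parts: $\Soc(B)\cap\lambda_f(B)=\Soc(B)\cap\theta_f(B)$, this set is an ideal of $B$, and it consists of $(s)$-elements.

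\emph{The equality.} By \cref{theta_f strong left ideal} we have $\theta_f(B)=\lambda_f(B)\cap FC(B,+)$. Since $\Soc(B)\subseteq Z(B,+)\subseteq FC(B,+)$, intersecting with $\Soc(B)$ gives
\[
\Soc(B)\cap\theta_f(B)=\Soc(B)\cap\lambda_f(B).
\]

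\emph{It is an ideal.} The plan is to show it is a left ideal and then use the remark preceding the corollary: any left ideal contained in $\Soc(B)$ is an ideal. Call the set $I$.
\begin{enumerate}
\item Both $\Soc(B)$ and $\lambda_f(B)$ are additive subgroups, the latter by \cref{sli}, so $I$ is an additive subgroup.
\item Both are $\lambda$-invariant: $\Soc(B)$ because it is an ideal, $\lambda_f(B)$ by \cref{sli}. Hence $I$ is a left ideal contained in $\Soc(B)$.
\end{enumerate}

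If one wants to check the remark directly rather than cite it, take $I\subseteq\Soc(B)$ a left ideal.
\begin{enumerate}
\item Additive normality is immediate, since $I\subseteq Z(B,+)$.
\item For multiplicative normality, the computation in the preceding theorem gives $a\circ x\circ\overline{a}=\lambda_a(x)$ for $x\in\Soc(B)$. So $a\circ I\circ\overline{a}=\lambda_a(I)=I$.
\end{enumerate}

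\emph{It consists of $(s)$-elements.} This is the final clause of the preceding theorem: any $\lambda_f$-element of $\Soc(B)$ is an $(s)$-element. For a self-contained check, let $x\in I$ and compare with the definition of an $(s)$-element.
\begin{enumerate}
\item The additive conjugates satisfy $-g+x+g=x$, so there is only one.
\item The multiplicative conjugates coincide with $[x]_\lambda$, which is finite.
\item The set of values $g*x=\lambda_g(x)-x$ is finite because $[x]_\lambda$ is.
\item For $x*g$, since $\lambda_x=\id$ we get $x*g=\lambda_x(g)-g=0$.
\end{enumerate}

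No step is a real obstacle. The only point needing care is justifying the remark that left ideals inside $\Soc(B)$ are ideals, and that reduces to the identity $a\circ x\circ\overline{a}=\lambda_a(x)$ already established above.
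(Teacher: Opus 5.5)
Your proposal is correct and follows essentially the paper's route: the equality comes from $\theta_f(B)=\lambda_f(B)\cap FC(B,+)$ and $\Soc(B)\subseteq Z(B,+)$, the ideal property from intersecting the left ideals $\Soc(B)$ and $\lambda_f(B)$ and using that left ideals inside $\Soc(B)$ are ideals, and the $(s)$-property from the preceding theorem. Your direct checks via $a\circ x\circ\overline{a}=\lambda_a(x)$ and $x*g=0$ just spell out what the paper leaves implicit.
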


Note that generally the ideal $\Soc(B) \cap \theta_f(B)$ does not coincide with $\Soc(B)$ or $\Ann(B)$ as the following example shows.
\begin{exa}
    Let $B$ be the skew brace given in \cref{rosita}. Then,  $$\Ann(B)\subsetneq \theta_f(B)\cap  \Soc(B)\subsetneq  \Soc(B).$$
\end{exa}

The following extends \cite[Lemma 4]{FC} to skew braces. Note that the seemingly extra condition that $\Soc(B)\subseteq \theta_f(B)$ is automatically satisfied for (almost) trivial skew braces.
\begin{lem}\label{lemma4}
    Let $B$ be a skew brace and $L$ be a strong left ideal of $B$, that is finitely generated as skew brace. If $\Soc(B) \subseteq \theta_f(B)$ and $|L: L\cap \Soc(B)|<\infty$, then $L \subseteq\theta_f(B)$.
\end{lem}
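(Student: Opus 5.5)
Since $\theta_f(B)$ is a strong left ideal by \cref{theta_f strong left ideal}, it suffices to show that $L$ is contained in the strong left ideal generated by finitely many $\theta_f$-elements. The model is Baer's argument: a normal subgroup that is finitely generated modulo central elements lies in the $FC$-center. So the plan is to write $L$ as $(L\cap\Soc(B))$ plus finitely many coset representatives. The first summand is handled by the hypothesis $\Soc(B)\subseteq\theta_f(B)$. For the representatives we use that $L$ is $\theta$-invariant and that the $\theta$-orbit of a representative is controlled by its orbit modulo $L\cap \Soc(B)$.

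Concretely, set $S=L\cap\Soc(B)$, which is a sub skew brace of $L$ of finite index. Its additive and multiplicative cosets coincide by \cref{inker}, since $S\subseteq\ker\lambda$. Choose representatives $y_1,\dots,y_m$ with $L=\bigcup_j (S+y_j)$. Since $S\subseteq\Soc(B)\subseteq\theta_f(B)$, every element of $S$ is $\theta_f$. Hence it is enough to prove that each $y_j$ is a $\theta_f$-element, because then $L\subseteq S+\langle y_1,\dots,y_m\rangle_+\subseteq\theta_f(B)$.

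Fix $y=y_j$ and $(a,b)\in(B,+)\rtimes_\lambda(B,\circ)$. Then $\theta_{(a,b)}(y)\in L$, since $L$ is $\theta$-invariant, so $\theta_{(a,b)}(y)=s+y_k$ for some $s\in S$ and some $k$. The difficulty is to bound the possible $s$: there are only $m$ choices of $k$, but $S$ may be infinite. Here the finite generation of $L$ as a skew brace enters. I would first show that $L$ is a $\lambda_f$-like object modulo $S$, and then try to show that $\Stab_\theta(y)$ has finite index.

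The key step, which I expect to be the main obstacle, is to find a finite-index subgroup of $(B,+)\rtimes(B,\circ)$ fixing each generator of $L$. My first attempt goes as follows. The map sending $(a,b)$ to the permutation of $L/S$ induced by $\theta_{(a,b)}$ has finite image, since $L/S$ is finite. Its kernel $K$ has finite index and maps $y_j$ to $y_j+s_j(a,b)$ with $s_j(a,b)\in S\subseteq \Soc(B)$. Because $S$ is central in $(B,+)$ and fixed pointwise by $\lambda$ on $L$, the assignment $(a,b)\mapsto s_j(a,b)$ should be a homomorphism from $K$ to $(S,+)$, in the manner of a crossed homomorphism, and I would verify this. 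It then suffices that its image is finite.

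To get finiteness of the image, I would use the finite generation of $L$. By \cref{gens}, $L$ is generated as a skew brace by finitely many elements; these can be absorbed into the $y_j$'s. I would then argue that the $s_j(a,b)$ lie in the finitely generated subgroup $S_0=S\cap\langle y_1,\dots,y_m\rangle_+$, which is finitely generated by Schreier. The elements of $S_0$ are $\theta_f$ by hypothesis. The image of the homomorphism is then a subgroup of $S_0$, and each $y_j$ has an orbit of bounded size modulo a torsion argument. If this route stalls, my fallback is to apply \cref{thetafg} to a suitable finitely generated sub skew brace of $L$ whose additive group is generated by $\theta_f$-elements.
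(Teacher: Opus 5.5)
\textbf{There is a genuine gap.} Your reduction is sound. $S=L\cap\Soc(B)$ is central in $(B,+)$ and has finite index in $L$, so it suffices to show that in $\theta_{(a,b)}(y)=s+y_k$ only finitely many $s\in S$ occur. But the step meant to deliver this fails, for three reasons.

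\emph{The map is not a homomorphism.} The condition $S\subseteq\ker\lambda$ says that $\lambda_s$ is trivial for $s\in S$. It does not say that $\lambda_b$ fixes $S$. So you only get the cocycle identity $s_j(gh)=s_j(g)+\lambda_b(s_j(h))$ for $g=(a,b)$. Take $\CD_\infty$ with $L=B$, $S=2\Z$, $y=1$. Then $K$ is the whole semidirect product and $s_j$ takes the values $0$ and $-2$, which is not a subgroup. Moreover $s_j((0,1)(0,1))=s_j((0,0))=0\neq -4$.

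\emph{Being a subgroup gives nothing.} Even a genuine homomorphism into a finitely generated abelian group whose elements are all $\theta_f$ can have infinite image. So ``its image is a subgroup of $S_0$'' proves no finiteness.

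\emph{The values need not lie in $S_0$.} There is no reason for $\theta_{(a,b)}(y_j)-y_j$ to lie in $\langle y_1,\dots,y_m\rangle_+$. Generators of $L$ as a skew brace need not generate $(L,+)$, and \cref{gens} only gives an a priori infinite additive generating set.

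As a result, you never establish that $(S,+)$ is finitely generated. You also never identify what makes the set of $s$ finite.

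\textbf{Both missing ingredients are short.}

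First, finite generation. Since $S\subseteq\Soc(L)$, you have $|L:\Soc(L)|<\infty$. So $L$ is $\theta_f$ by \cref{oversocfinite}, and $(L,+)$ is finitely generated by \cref{fgtheta}. Hence $S$ is a finitely generated abelian group. This is where the finite generation of $L$ as a skew brace is used.

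Second, the torsion argument. Let $m$ be the order of $y+S$ in $L/S$, so $my\in S\subseteq\theta_f(B)$. From $\theta_{(a,b)}(y)=s+y_k$ and the centrality of $s$ you get $\theta_{(a,b)}(my)=ms+my_k$. So $ms$ ranges over a finite set. Since the $m$-torsion of $S$ is finite, $s$ also ranges over a finite set. This is the ``torsion argument'' you only allude to, and it is the core of the paper's proof.

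\textbf{Comparison with the paper.} The paper runs this argument for $\lambda$ alone. It writes $b=a\circ c$ with $a$ in a finite transversal of $\Stab_\lambda(mt)$, obtains $\lambda_a(mt)-mt=mz$, and then counts coset by coset. It gets $L\subseteq FC(B,+)$ separately from Baer's Lemma 4. Done on the full $\theta$-orbit as above, your framework would handle both parts at once and make $K$ and the homomorphism unnecessary.
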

\begin{proof}
    Denote $N=L \cap \Soc(B)$. Clearly, $|L/\Soc(L)|<\infty$, which shows that $L$ is $\theta_f$. Hence, by \cref{fgtheta}, it follows that $(L,+)$ is finitely generated. Hence, $(N,+)$ is a finitely generated abelian group. In particular, the equation $mz = y$ has only finitely many solutions $z \in N$ for given $y \in N$ and positive integer $m$.
    Suppose to have $t \in L$ and $b \in B$ such that $\lambda_b(t) =t +z$ for some $z \in L\cap \Soc(B)$. There exists a positive integer $m$ such that $mt \in N$. As $mt \in \Soc(B)\subseteq \theta_f(B)$, it follows that there is a (finite) transversal $T$ for $\Stab_{\lambda}(mt)$ in $B$. Let $b = a \circ c$, where $a \in T$ and $c \in \Stab_{\lambda}(mt)$. Then, using that $z \in Z(B,+)$ one finds that
    $$ \lambda_a(mt)=\lambda_{a \circ c}(mt) = \lambda_{b}(mt) =m\lambda_{b}(t)=mt+mz.$$ 
    This is equivalent to \begin{equation}\label{equat}
        \lambda_a(mt)-mt = mz.
    \end{equation}
    Hence, for given $a$ and $t$ there are only finitely many $z$ that satisfy this equation. As $a \in T$ and the latter is finite, it follows that the number of $z$ satisfying any of the equations of the form \eqref{equat}, where $a \in T$, is finite. Hence, there are only finitely many different $\lambda_b(t)$ such that $\lambda_b(t) =t+z$ with $z \in N$. 
    Consider the cosets $c_1+N,\dots, c_k+N$ in $L$. Then, for every $b \in B$ there exists an $1\leq i \le k$ such that $\lambda_b(t) \in c_i+N$. 
    Let $\lambda_g(t),\lambda_h(t) \in c_i +N$ for some $g,h \in B$. Then, there is a $z \in N$ such that $$ \lambda_g(t) =\lambda_h(t) +z.$$ Equivalently, with $z'=\lambda_{\overline{h}}(z)$ one has that $$ \lambda_{\overline{h}\circ g}(t) =t+z'.$$ 
    As proven before, the number of $z'$ is finite. As $z' \in \Soc(B) \subseteq \lambda_f(B)$, it follows that $|[z']_{\lambda}|<\infty$. Thus, for a given $h \in B$ there can be only finitely many different $\lambda_g(t)$ in the same coset as $\lambda_h(t)$.  As the number of cosets is finite, it follows that $[t]_{\lambda}$ is a finite set. Hence, $L \subseteq \lambda_f(B)$.
    As $\Soc(B) \subseteq Z(B,+)$, it follows from \cite[Lemma 4]{FC} that $L \subseteq FC(B,+)$, which shows that $L \subseteq \theta_f(B)$, proving the result.
\end{proof}

One of the most important results in the study of $FC$-groups is Dietzmann’s Lemma \cite[Result 14.5.7]{robinson2}, which guarantees that a finite set of periodic $FC$-elements is contained in a finite normal subgroup. The following result extends this to  $\theta_f$-skew braces. We show that any finite set of (additively) periodic elements is contained in a strong left ideal.

\begin{thm}\label{dietz}
    Let $B$ be a skew brace and $x_1,...,x_n \in B$ be $\theta_f$-elements of finite additive order. Then, they generate a strong left ideal of finite order. 
\end{thm}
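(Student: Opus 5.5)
Let $S=\bigcup_{i=1}^n [x_i]_\theta$. The plan is to show that the strong left ideal generated by $x_1,\dots,x_n$ is the additive subgroup $\langle S\rangle_+$, and that this subgroup is finite by the classical Dietzmann Lemma applied inside $(B,+)$.

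\textbf{Step 1: the generated ideal is $\langle S\rangle_+$.} Apply \cref{leftidealgenerated} to $K=\langle x_1,\dots,x_n\rangle_+$. Since each $\theta_{(a,b)}$ is an additive automorphism, $\theta_{(a,b)}(K)$ is generated by the elements $\theta_{(a,b)}(x_i)$. Hence the strong left ideal generated by the $x_i$ is $L=\langle S\rangle_+$. The set $S$ is finite, because every $x_i$ is $\theta_f$. Every element of $S$ also has finite additive order: it is the image of some $x_i$ under an additive automorphism, so its order equals the order of $x_i$. Moreover, $S$ is closed under additive conjugation, since $c+\theta_{(a,b)}(x)-c=\theta_{(c+a,b)}(x)$.

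\textbf{Step 2: $L$ is finite.} The elements of $S$ lie in $\theta_f(B)\subseteq FC(B,+)$. This holds either because $\theta_f(B)$ is a strong left ideal by \cref{theta_f strong left ideal}, or simply because each element of $S$ has the same $\theta$-orbit as some $x_i$. So $S$ is a finite set of periodic $FC$-elements of $(B,+)$, closed under conjugation. Dietzmann's Lemma \cite[Result 14.5.7]{robinson2} then says that the subgroup generated by $S$ is a finite normal subgroup of $(B,+)$. Therefore $L$ is finite.

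If one prefers not to quote Dietzmann directly, the standard argument runs as follows. Every element of $L$ is a word in $S$. Using the relation $s+t=t+(-t+s+t)$ together with conjugation-closure of $S$, words can be reordered so that equal letters are collected. This bounds the word length by $|S|\cdot\max_{s\in S}o(s)$, which gives finiteness.

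The only delicate point is making sure the $\theta$-orbit, and not merely the additive conjugacy class, is the right generating set. That is exactly what \cref{leftidealgenerated} supplies. The finiteness itself is the group-theoretic Dietzmann Lemma and requires no further skew-brace input.
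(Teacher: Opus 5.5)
Your proof is correct and follows essentially the same route as the paper: take the finite union of the $\theta$-orbits of the $x_i$, observe that it is closed under additive conjugation and consists of elements of finite additive order, and conclude by Dietzmann's Lemma in $(B,+)$ that the subgroup it generates is finite. Your appeal to \cref{leftidealgenerated}, which identifies this subgroup with the strong left ideal generated by the $x_i$, just spells out a step the paper leaves implicit.
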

\begin{proof}
    Consider the subset $X = \left\lbrace \theta_{a,b}(x_i) \mid a,b \in B, 1 \leq i \leq n\right\rbrace$. As the elements $x_i$ are $\theta_f$-elements, it follows that $X$ is a finite set of elements of finite additive order that is closed under the $\theta$-action. Hence, the subgroup $L$ generated by these elements is finite, as the generating set is closed under conjugation and is closed under the $\theta$-action. Hence, $L$ is a finite strong left ideal that contains the $x_i$. 
\end{proof}

Note that in \cref{dietz} we explicitly demanded that the elements have finite additive order. However, they are contained in a finite strong left ideal, which entails that they need to have finite multiplicative order. Hence, the following result is shown.
\begin{cor}
\label{Dietzmann cor}
    Let $B$ be a skew brace and let $x$ be a $\theta_f$-element of finite additive order. Then $x$ has finite multiplicative order.

    Furthermore, if $B$ is a $\theta_f$-skew brace, then the torsion $T_+(B)$ of $(B,+)$ forms a locally finite strong left ideal that is contained in the set of torsion elements of $(B,\circ)$.
\end{cor}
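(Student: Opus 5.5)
The first claim follows directly from \cref{dietz}. Let $x$ be a $\theta_f$-element of finite additive order. Then \cref{dietz} applied to the single element $x$ gives a finite strong left ideal $L$ containing $x$. A strong left ideal is in particular a left ideal, so it is $\lambda$-invariant. Any $\lambda$-invariant additive subgroup is a sub skew brace, so $(L,\circ)$ is a finite subgroup of $(B,\circ)$. Since $x\in L$, it has finite multiplicative order.

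For the second claim, assume $B$ is $\theta_f$. First I show that $T_+(B)$ is a subgroup of $(B,+)$. Take $x,y\in T_+(B)$. By \cref{dietz} they lie in a finite strong left ideal $L$, so $x-y\in L$ has finite additive order. Hence $T_+(B)$ is an additive subgroup.

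Next, $T_+(B)$ is closed under every $\theta_{(a,b)}$, because each $\theta_{(a,b)}$ is an additive automorphism and therefore preserves additive order. Being a $\theta$-invariant subgroup, $T_+(B)$ is a strong left ideal by the characterization proved at the start of \cref{sectheta}.

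Local finiteness also comes from \cref{dietz}. Any finite subset of $T_+(B)$ lies in a finite strong left ideal $L$, and $L\subseteq T_+(B)$ since $L$ is finite. The sub skew brace generated by that finite subset is contained in $L$, hence finite. This works for both the additive and multiplicative operations, since $L$ is closed under both.

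Finally, by the first part, every element of $T_+(B)$ has finite multiplicative order. So $T_+(B)$ is contained in the set of torsion elements of $(B,\circ)$.

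No step seems hard. The only point to check carefully is that a finite left ideal really gives a finite subgroup of $(B,\circ)$. This holds because left ideals are sub skew braces, as noted in the preliminaries: $\lambda$-invariance together with $a\circ b=a+\lambda_a(b)$ and $\overline a=-\lambda_{\overline a}(a)$ gives closure under $\circ$ and multiplicative inverses.
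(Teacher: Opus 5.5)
Your proof is correct and follows the paper's route: everything comes from \cref{dietz}, because a finite strong left ideal is a finite sub skew brace, so its elements have finite multiplicative order. For the second part you also supply the details the paper leaves implicit, namely that $T_+(B)$ is an additive subgroup, is $\theta$-invariant, and is locally finite, and these arguments are all sound.
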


Unfortunately, generally in a $\theta_f$-skew brace $B$ the set of  torsion elements of $(B,\circ)$ is not a subgroup, as demonstrated by the following example

\begin{exa}
Recall the skew brace $CD_{\infty}$ defined in \cref{es1}.

The set of torsion elements of $(\Z,\circ)$ is $\{0\}\cup (2\Z+1)$, which is not a subgroup and whose group generated by is the full $\Z$. Moreover, we observe that $\Z^{(2)}$ is not periodic.

\end{exa}

The following results provide an analog of \textit{Dietzmann's Lemma} for two-sided skew braces.

\begin{thm}
    Let $x_1,...,x_n$ be $\theta_f$, additively periodic elements of a two-sided brace $(B,+,\circ)$. Then $x_1,...,x_n$ are contained in a locally finite ideal of $B$. If furthermore $x_1,...,x_n \in FC(B,\circ)$, then they are contained in a finite ideal of $B$.
\end{thm}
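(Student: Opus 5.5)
Start from \cref{dietz}: the elements $x_1,\dots,x_n$ lie in a finite strong left ideal $L$ of $B$. To get an ideal, close $L$ under multiplicative conjugation. Since $B$ is two-sided, \cref{mult conj in two-sided} says each $\varphi_c\colon x\mapsto c\circ x\circ\overline{c}$ is a skew brace automorphism. Hence every $\varphi_c(L)$ is again a finite strong left ideal, of the same order as $L$.

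Define
\[
I=\left\langle \varphi_c(L)\mid c\in B\right\rangle_+ .
\]
Because each $\varphi_c(L)$ is a strong left ideal, $I$ is a normal subgroup of $(B,+)$ invariant under every $\lambda_b$, so $I$ is a strong left ideal. Since $\varphi_d\varphi_c=\varphi_{d\circ c}$ and $\varphi_d$ is additive, $\varphi_d(I)=I$ for all $d\in B$. A strong left ideal is a subgroup of $(B,\circ)$, so $(I,\circ)$ is normal in $(B,\circ)$ and $I$ is an ideal containing the $x_i$.

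For local finiteness, take a finite subset $F\subseteq I$. Each element of $F$ is a finite sum of elements from finitely many $\varphi_{c_j}(L)$, $j=1,\dots,k$. The sum $S=\varphi_{c_1}(L)+\dots+\varphi_{c_k}(L)$ of finitely many finite normal subgroups of $(B,+)$ is a finite normal subgroup. As a sum of $\lambda$-invariant subgroups it is also $\lambda$-invariant, so $S$ is a finite strong left ideal, hence a finite sub skew brace containing $F$. So the sub skew brace generated by $F$ is finite, whether we generate additively or multiplicatively, and $I$ is locally finite.

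Now suppose also $x_1,\dots,x_n\in FC(B,\circ)$. The main obstacle is to show that $\{\varphi_c(L)\mid c\in B\}$ is finite; then $I$ is a finite sum of finite normal subgroups and hence finite. Since $L$ is finite, it is enough to show that the stabilizer of $L$ under conjugation has finite index in $(B,\circ)$. I would first try to replace $L$ by a better-chosen finite ideal-candidate. Let $Y=\bigcup_i [x_i]_\circ$, which is finite. Each $y\in Y$ satisfies $y=\varphi_c(x_i)$, and $\varphi_c$ preserves $\theta$-orbits and additive order up to relabeling, so $y$ is again $\theta_f$ and additively periodic. By \cref{dietz}, $Y$ lies in a finite strong left ideal $L'$ generated by the finite $\theta$-closed set $X'=\bigcup_{y\in Y}[y]_\theta$.

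The key point is that $X'$ is invariant under every $\varphi_c$. Indeed $\varphi_c(\theta_{(a,b)}(y))=\theta_{(\varphi_c(a),\varphi_c(b))}(\varphi_c(y))$, and $\varphi_c(y)\in Y$. Hence $\varphi_c(L')=L'$ for all $c\in B$. So $L'$ is already a finite strong left ideal closed under multiplicative conjugation, therefore a finite ideal containing the $x_i$, and then $I=L'$ is the desired finite ideal. The step to verify carefully is the equivariance identity $\varphi_c\theta_{(a,b)}=\theta_{(\varphi_c(a),\varphi_c(b))}\varphi_c$, which follows from $\varphi_c$ being a skew brace automorphism.
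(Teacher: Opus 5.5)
Your proof is correct.

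\textbf{First half.} This matches the paper's construction: both take the finite strong left ideal from \cref{dietz} and close it under multiplicative conjugation, forming $I=\langle \overline{g}\circ A\circ g\mid g\in B\rangle_+$. Your local finiteness argument is more direct than the paper's appeal to periodicity and \cref{fg}. A finite subset of $I$ lies in a sum of finitely many finite strong left ideals, and such a sum is again a finite strong left ideal.

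\textbf{Finite case.} Here your route differs from the paper's.
\begin{itemize}
\item The paper asserts $A\subseteq FC(B,\circ)$. It then concludes that $A$ has only finitely many multiplicative conjugates and sums them to get a finite $I$.
\item You instead apply \cref{dietz} to the finite conjugation-closed set $\bigcup_i[x_i]_\circ$. The identity $\varphi_c\theta_{(a,b)}=\theta_{(\varphi_c(a),\varphi_c(b))}\varphi_c$ then shows that the resulting $L'$ is already stable under every $\varphi_c$. So $L'$ is itself the required finite ideal, and no further closure is needed.
\end{itemize}
Your version makes explicit what the paper leaves unjustified, namely the inclusion $A\subseteq FC(B,\circ)$. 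That inclusion follows from the same identity plus one further fact:
\begin{itemize}
\item every conjugate of $\theta_{(a,b)}(x_i)$ lies in the finite set $\bigcup_{y\in[x_i]_\circ}[y]_\theta$;
\item $FC(B,\circ)$ is an additive subgroup, because multiplicative conjugation is additive in a two-sided skew brace.
\end{itemize}
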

\begin{proof}
    Let $a_1,...,a_t$ be elements of the ideal $I$ generated by $x_1,...,x_n$. Then, by \cref{dietz}, there exists a finite strong left ideal $A$ of $B$ such that $x_1,...,x_n\subseteq A\subseteq I$. Observe that $I=\left<\overline{g}   \circ A\circ g \ | \ g\in B\right>_+$, thus each element of $I$ is additively periodic and the group $\left<a_1,...,a_t\right>_+$ is finite. The statement follows by \cref{fg}. Assume now that $x_1,...,x_n \in FC(B,\circ)$. Then $A\subseteq FC(B,\circ)$, in particular $\{\overline{g}\circ A\circ g \ |\ g\in B\}$ is finite, thus $I$ is finite, since it is a finitely generated subgroup of $(B,+)$ consisting of periodic $FC$-elements of $(B,+)$.
\end{proof}

In the following result we focus on a particular subclass of $\theta_f$-skew braces $B$ where the additive group $(B,+)$ is $FC$ and for every $b$ the automorphism $\lambda_b$ is of finite order. 

First, we provide some technical lemmas which will allow for a sharper result for $\theta_{bf}$ skew braces. The following lemma is well-known in group theory. We include the proof to indicate the similarity with the proof of  \cref{order of lambda for theta Bf}.

\begin{lem}
\label{BFC -> G/Z has finite exp}
    Let $(G,\cdot)$ be a BFC-group with $k\in\N$ such that
    $|[g]_\cdot|\leq k$ for all $g\in G$.
    Then $G/Z(G)$ is of finite exponent at most $k!$.
\end{lem}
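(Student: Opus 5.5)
The plan is to show that for every $g\in G$ the power $g^{k!}$ commutes with every $x\in G$, i.e.\ $g^{k!}\in C_G(x)$ for all $x$. This gives $g^{k!}\in Z(G)$, so every element of $G/Z(G)$ has order dividing $k!$, and hence the exponent of $G/Z(G)$ is at most $k!$.

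First, fix $x\in G$. Since $|[x]_\cdot|\leq k$ and $|G:C_G(x)|=|[x]_\cdot|$ by the Orbit-Stabilizer theorem, the centralizer $C_G(x)$ has index at most $k$ in $G$.

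The key step is to get a bound that does not depend on $g$, which is why we aim for $k!$ rather than just the index. Consider the action of $g$ by left multiplication on the set of left cosets of $C_G(x)$ in $G$. This set has at most $k$ elements, so the action induces a homomorphism $\langle g\rangle\to \mathrm{Sym}(G/C_G(x))$. The target is isomorphic to a subgroup of $S_k$, so every permutation in it has order dividing $k!$. Hence $g^{k!}$ acts trivially; in particular $g^{k!}C_G(x)=C_G(x)$, which means $g^{k!}\in C_G(x)$.

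An equivalent route is to note that among the $k+1$ cosets $g^iC_G(x)$, $0\leq i\leq k$, two must coincide. This gives $g^j\in C_G(x)$ for some $1\leq j\leq k$, and since $j\mid k!$ we again get $g^{k!}\in C_G(x)$. Because $x$ was arbitrary, $g^{k!}\in\bigcap_{x}C_G(x)=Z(G)$, which finishes the argument.

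There is no real obstacle beyond the uniformity in $g$, and using $k!$ handles that. The same pattern should transfer to $\lambda_b$ in \cref{order of lambda for theta Bf}: replace $C_G(x)$ by $\Stab_\lambda(x)$, of index $|[x]_\lambda|\leq k$, to get $\lambda_b^{k!}=\id$.
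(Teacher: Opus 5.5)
Your argument is correct and is essentially the paper's. The paper embeds $G/Z(G)$ into the Cartesian product of the quotients $G/(C_G(g))_G$ by the cores of the centralizers; each factor is a permutation group on at most $k$ cosets and so has exponent dividing $k!$, which is just the global form of your observation that $g^{k!}$ acts trivially on $G/C_G(x)$ (and your pigeonhole variant is the argument the paper then uses to bound the order of $\lambda_b$).
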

\begin{proof}
    Consider the monomorphism $$\phi:xZ(G)\in G/Z(G)\to (x(C_G(g))_G)_{g\in G}\in Cr_{g\in G}(C_G(g))_G.$$ The statement follows from the fact that the group $Cr_{g\in G}(C_G(g))_G$ has finite exponent of order $\leq k!$.
\end{proof}

For a $\theta_{bf}$ skew brace, we provide a bound on the order of the automorphism $\lambda_b$ for any $b \in B$.

\begin{lem}
\label{order of lambda for theta Bf}
    Let $(B,+,\circ)$ be a $\theta_{Bf}$ skew brace
    with $k\in\N$ such that
    $|[x]_\theta|\leq k$ for all $x\in B$.
    Then $\lambda_b$ has finite order $\leq k!$.
\end{lem}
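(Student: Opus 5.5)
The plan mirrors the proof of \cref{BFC -> G/Z has finite exp}, with $\ker\lambda$ in the role of the center and the $\lambda$-stabilizers $\Stab_\lambda(x)$ in the role of the centralizers $C_G(g)$. Since $[x]_\lambda\subseteq [x]_\theta$ (take $\theta_{(0,b)}(x)=\lambda_b(x)$), every $x\in B$ satisfies $|[x]_\lambda|\leq k$. By the Orbit-Stabilizer theorem this gives $|(B,\circ):\Stab_\lambda(x)|\leq k$ for all $x\in B$.

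Fix $x\in B$ and consider the action of $(B,\circ)$ by left multiplication on the left cosets of $\Stab_\lambda(x)$. There are at most $k$ such cosets, so the action gives a homomorphism $(B,\circ)\to \mathrm{Sym}(m)$ with $m\leq k$. Its kernel is the core $N_x=\bigcap_{g\in B} g\circ \Stab_\lambda(x)\circ\overline{g}\subseteq \Stab_\lambda(x)$. The quotient $(B,\circ)/N_x$ embeds in a symmetric group of degree at most $k$, so its exponent divides $k!$. Hence $b^{k!}\in N_x\subseteq\Stab_\lambda(x)$ for every $b\in B$, where $b^{k!}$ denotes the $k!$-th power in $(B,\circ)$.

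Since $\lambda$ is a homomorphism from $(B,\circ)$ to $\Aut(B,+)$, we get $\lambda_b^{k!}(x)=\lambda_{b^{k!}}(x)=x$ for every $x\in B$. Thus $\lambda_b^{k!}=\id_B$, i.e.\ $b^{k!}\in\ker\lambda=\bigcap_{x\in B}\Stab_\lambda(x)$, so the order of $\lambda_b$ divides $k!$ and in particular is at most $k!$. Equivalently, $(B,\circ)/\ker\lambda$ embeds into the Cartesian product $\prod_{x\in B}(B,\circ)/N_x$, each factor of which has exponent dividing $k!$, exactly as in \cref{BFC -> G/Z has finite exp}.

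There is no real obstacle. The only point needing care is that the exponent bound comes from the normal core $N_x$ of $\Stab_\lambda(x)$, not from $\Stab_\lambda(x)$ itself, which need not be normal in $(B,\circ)$. Passing to the core is what lets us conclude that every $k!$-th power lies in $\Stab_\lambda(x)$.
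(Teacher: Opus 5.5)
Your proof is correct, but it takes a slightly different route from the paper's. The paper argues only with the cyclic group generated by $\lambda_b$. The $k+1$ elements $x,\lambda_b(x),\dots,\lambda_b^{k}(x)$ all lie in $[x]_\lambda$, which has at most $k$ elements, so by pigeonhole $\lambda_b^{k_x}(x)=x$ for some $1\leq k_x\leq k$. Since $k_x$ divides $k!$, this gives $\lambda_b^{k!}(x)=x$ for every $x$.

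You instead use the permutation representation of all of $(B,\circ)$ on the cosets of $\Stab_\lambda(x)$, pass to the normal core $N_x$, and invoke the exponent of $\mathrm{Sym}(m)$. Your version yields the structural statement that $(B,\circ)/\ker\lambda$ embeds in a Cartesian product of groups of exponent dividing $k!$, which makes the parallel with \cref{BFC -> G/Z has finite exp} exact. The paper's version is shorter and needs no normality at all.

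Your closing remark therefore overstates the role of the core. For any subgroup $H\leq(B,\circ)$ of index at most $k$ and any $b\in B$, two of the $k+1$ cosets $H, b\circ H,\dots,b^{k}\circ H$ coincide. Hence $b^{j}\in H$ for some $1\leq j\leq k$, and so $b^{k!}\in H$, with no normality needed. This is exactly the paper's argument transported from the orbit $[x]_\lambda$ to the cosets of $\Stab_\lambda(x)$.
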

\begin{proof}
  Let $b\in B$. Since $|[x]_\lambda|\leq k$, 
  for each $x\in B$,
  there exists $k_x\in\{1,2,\dots, k\}$ such that
  $\lambda_{b^{k_x}}(x)=\lambda^{k_x}_b(x)=x$.
  In particular, $\lambda^{k!}_b(x)=(x)$ for every $x\in B$.
\end{proof}

The subclass of $\theta_f$-skew braces where for all $b \in B$ the automorphism $\lambda_b$ has finite order share a striking resemblance to $FC$-groups. Indeed, the following results are akin to Baer's theorem, which shows that the quotient of an $FC$-group over its center is locally finite.

\begin{thm}\label{unsatisfacttheorem}
    Let $B$ be a $\theta_f$-skew brace.
    Then every $\lambda_b$ is of finite order if and only if $B/\Soc(B)$ is periodic. 
    Moreover, if $B$ is $\theta_{bf}$, then $B/\Soc(B)$ is of finite exponent.
\end{thm}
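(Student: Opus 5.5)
The plan is to work with the multiplicative group of the quotient $B/\Soc(B)$, where $\Soc(B)=\ker\lambda\cap Z(B,+)$, and to reduce everything to two group-theoretic facts. The first is that $\lambda\colon(B,\circ)\to\Aut(B,+)$ is a homomorphism with kernel $\ker\lambda$. The second is that $(B,+)$ is an $FC$-group, because $B$ is $\theta_f$. Hence by Baer's theorem \cite[Result 14.5.6]{robinson2} the quotient $(B,+)/Z(B,+)$ is periodic. Two identities will be used repeatedly:
\begin{itemize}
\item for $k\in\ker\lambda$ one has $k\circ x=k+x$, so $k^{\circ m}=mk$;
\item for any $b$ one has $b^{\circ n}=b+\lambda_b(b)+\dots+\lambda_b^{n-1}(b)$.
\end{itemize}

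\emph{The easy direction.} Suppose $B/\Soc(B)$ is periodic, so for every $b$ there is $n$ with $b^{\circ n}\in\Soc(B)\subseteq\ker\lambda$. Then $\lambda_b^n=\lambda_{b^{\circ n}}=\id$, so $\lambda_b$ has finite order.

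\emph{The converse, multiplicative part.} Suppose every $\lambda_b$ has finite order $n$. Then $k:=b^{\circ n}\in\ker\lambda$. Since $(B,+)$ is $FC$, some $m$ satisfies $mk\in Z(B,+)$. Moreover $mk=k^{\circ m}$ by the first identity, and $mk\in\ker\lambda$ because $\ker\lambda$ is an additive subgroup. Hence $b^{\circ nm}=mk\in\Soc(B)$, and $(B/\Soc(B),\circ)$ is periodic.

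\emph{The converse, additive part.} If "periodic" is also meant additively, I would transfer periodicity to $(B/\Soc(B),+)$. The quotient $C=B/\Soc(B)$ is again $\theta_f$ by \cref{theta_f strong left ideal}. I expect this transfer to be the main obstacle. My first attempt would take $b$ and choose $m$ with $c=mb\in Z(B,+)$. The finite $\lambda$-orbit of $c$ generates a finitely generated abelian, $\lambda$-invariant subgroup $M\subseteq Z(B,+)$. The group $\langle\lambda_x\rangle$ acts on $M$ through a finite permutation group of its generators. Combining $c^{\circ N}=\sum_{i<N}\lambda_c^i(c)\in\Soc(B)$, where $N$ is the order of $\lambda_c$, with this finite action, I would try to show that some integer multiple of $c$ has trivial $\lambda$-action. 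Failing that, I would argue inside $C$ with the finite strong left ideals of \cref{dietz} and \cref{Dietzmann cor}, transferring in the reverse direction from multiplicative to additive order.

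\emph{The bounded case.} Suppose $|[x]_\theta|\le k$ for all $x$. By \cref{order of lambda for theta Bf}, $\lambda_b^{k!}=\id$, so $b^{\circ k!}\in\ker\lambda$. Since $|[x]_+|\le|[x]_\theta|\le k$, the group $(B,+)$ is BFC, and \cref{BFC -> G/Z has finite exp} gives that $(B,+)/Z(B,+)$ has exponent at most $k!$. Applying this to $b^{\circ k!}\in\ker\lambda$ and using $(b^{\circ k!})^{\circ k!}=k!\cdot b^{\circ k!}$, we get $b^{\circ (k!)^2}\in\Soc(B)$. Hence $B/\Soc(B)$ has exponent dividing $(k!)^2$.
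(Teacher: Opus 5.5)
Your arguments correctly prove the \emph{multiplicative} version of the statement. That version says: every $\lambda_b$ has finite order iff $(B/\Soc(B),\circ)$ is periodic, and in the $\theta_{Bf}$ case $b^{\circ (k!)^2}\in\Soc(B)$. This is essentially the Remark the paper places after \cref{satisfacttheorem}, which needs only that $(B,+)$ is $FC$. The theorem itself is about \emph{additive} periodicity of $B/\Soc(B)$, for three reasons:
\begin{enumerate}
\item The paper's proof ends by exhibiting an additive multiple $lhmx\in\Soc(B)$.
\item Its converse direction first applies \cref{Dietzmann cor} to the $\theta_f$-skew brace $B/\Soc(B)$, to turn finite additive order into finite multiplicative order.
\item \cref{satisfacttheorem} later feeds the resulting additive torsion into \cref{dietz}.
\end{enumerate}
So the step you flag as the ``main obstacle'' is exactly the content of the theorem, and it is left unproved. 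Under this reading your easy direction is also missing the \cref{Dietzmann cor} step, and $(k!)^2$ is only a multiplicative exponent.

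Neither of your suggested ways around the obstacle works as stated. The identity $c^{\circ N}=\sum_{i<N}\lambda_c^i(c)\in\Soc(B)$ gives no relation between $c^{\circ N}$ and $Nc$. There is also no general transfer from multiplicative to additive order in $\theta_f$-skew braces: in $\CD_\infty$ every odd integer has multiplicative order $2$ but infinite additive order.

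The missing idea is to find an additive multiple $y$ of $x$ such that $\lambda_y$ fixes pointwise a $\lambda$-invariant subgroup containing $y$. Then $y^{\circ j}=jy$ and $\lambda_{jy}=\lambda_y^j$, so additive multiples of $y$ are multiplicative powers, and the finite order of $\lambda_y$ becomes usable.

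The paper obtains such a $y$ as follows.
\begin{enumerate}
\item Take the finitely generated sub skew brace $A$ generated by $x$ and a transversal of $\Stab_\lambda(x)$. It is $\theta_f$, so $(A,+)$ is finitely generated by \cref{fgtheta}.
\item By \cref{thetafg}, $m=|A:\Soc(A)|<\infty$, hence $mx\in\Soc(A)$.
\item Choose $h$ with $hmx\in Z(B,+)$. Since $hmx=(mx)^{\circ h}$, it still acts trivially on $A$.
\item Let $l$ be the order of $\lambda_{hmx}$. Then $\lambda_{lhmx}=\lambda_{hmx}^l=\id$, so $lhmx\in\Soc(B)$.
\end{enumerate}

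Your own subgroup $M=\langle[c]_\lambda\rangle_+$ could also be made to work. It is a left ideal, and each $\lambda_a|_M$ is determined by the permutation it induces on the finite set $[c]_\lambda$, so $(B,\circ)$ acts on $M$ through a finite group. Hence $\{y\in M:\lambda_y|_M=\id\}$ is a sub skew brace of finite multiplicative index in $M$, and so of finite additive index by \cref{inker}. This yields $q$ with $\lambda_{qc}|_M=\id$, and the argument above then applies to $y=qc$. For the bounded case you would still need uniform bounds on all these multipliers; the paper bounds $m$ via \cref{thetafg}.
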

\begin{proof}
Let $B$ be a $\theta_f$-skew brace and  $x \in B$.
Then, we know that $\Stab_{\lambda}(x) $ is 
of finite index $\leq [x]_\theta=n$
in $(B,\circ)$.
Take $T=\left\lbrace t_1,..., t_n \right\rbrace$ to be a transversal. 
Then, the skew brace $A=\left<t_1,...,t_n,x\right>$ is $\theta_f$ and finitely generated. 
Hence, by \cref{fg} $(A,+)$ is finitely generated. 
More precisely, by \cref{gens}, 
the group $(A,+)$ is generated by 
\[
\bigcup_{i=1}^n\left\lbrack t_i\right\rbrack_{\theta} \cup [x]_{\theta},
\]
which contains at most $n'(n+1)$ elements, 
where $n'=\max\{\max\{|[t_i]_\theta|:i\in \{1,...,n\}\},|[x]_\theta|\}$.
So, by \cref{thetafg},
$\Soc(A)$ is of finite index $m=|A:\Soc(A)|\leq n^{n'(n+1)}$ in $A$.
Thus, $\lambda_{mx}(t_i) = t_i$ and $\lambda_{mx}(x) =x$ and $mx \in Z(A,+)$ for $m\leq n^{n'(n+1)}$.
Since $(B,+)$ is an $FC$-group,
$(B,+)/Z(B,+)$ is periodic.
So  there exists $h\in \Z$ such that $hmx\in Z(B,+)$.
Moreover, $mx\in\Soc(A)$, therefore 
$hmx=(mx)^{\circ h}$ and so 
$\lambda_{hmx}=(\lambda_{mx})^h$
is the identity on $A$.
Therefore $hmx\in \Soc(A)\cap Z(B,+)$.
Since $\lambda_{hmx}$ is of finite order,
there exists a $l$ such that $\lambda_{hmx}^l=\textup{id}_B$.
As $hmx\in \Soc(A)$, we see that $\lambda_{lhmx}=\lambda_{hmx}^l=\id_B$.
As $lhmx \in Z(B,+)$, it follows that $lhmx \in \Soc(B)$, which shows the first half of the result. 

Assume now that $B/\Soc(B)$ is periodic. As $B/\Soc(B)$ is an $\theta_f$-skew brace, \cref{Dietzmann cor} shows that every element of $B/\Soc(B)$ has finite multiplicative order. 
Hence, for every $b \in B$ there exists a $n\in \N$ such that $\lambda_b^n=\id_B$, which shows the result.

In the case of $B$ being $\theta_{Bf}$,
let $N\in \N$ such that $[x]_\theta\leq N$.
Going through the calculations before,
we can see that $n,n'\leq N$, $m\leq N^{N(N+1)}$.
Moreover, since $[x]_+\subseteq [x]_\theta$ for every $x\in B$,
using \cref{BFC -> G/Z has finite exp}, we get that $h\leq N!$.
Finally, by \cref{order of lambda for theta Bf}, $ l\leq N!$.
Therefore we can conclude that $N! N!  N^{N(N+1)} x\in\Soc(B)$
for every $x\in B$.
Hence $B/\Soc(B)$ has finite exponent.
\end{proof}

Note that the condition that all $\lambda_b$ are of finite order is natural. If we specialize the result to group theory, by considering almost trivial skew braces, we see that the $\lambda$ map coincides with (additive) conjugation, which has finite order due to the demand that $(B,+)$ is $FC$. Moreover, it is necessary and 
does not follow from $B$ being a $\theta_f$-skew brace, as the following example illustrates.

\begin{exa}\label{notfg}
    Consider the trivial skew braces (of abelian type)
    $$A= \bigoplus_{p \textnormal{ prime}}\mathbb{Z}_p \textnormal{ and }C= \mathbb{Z}.$$
    Then, to form the semidirect product of trivial braces (of abelian type),
    it is sufficient to select a group morphism $ \lambda\colon \mathbb{Z} \rightarrow \Aut(A,+)$.
    Denote by $\varphi_p\colon \mathbb{Z}_p \rightarrow \mathbb{Z}_p$
    a non-identity automorphism of $\mathbb{Z}_p$. 
    Then, set $\varphi = \Pi_{p \textnormal{ prime}}\varphi_p$. 
    Then, the morphism $\lambda(k)=\varphi^k$ is such a group morphism. 
    Hence, we have constructed a skew brace (of abelian type) $B$
    with additive group $(\displaystyle\bigoplus_{p \textnormal{ prime}}\mathbb{Z}_p)\times \mathbb{Z}$
    and multiplicative group $(\displaystyle\bigoplus_{p \textnormal{ prime}}\mathbb{Z}_p) \rtimes_{\varphi} \mathbb{Z}$.
    Clearly, this skew brace is a $\theta_f$-skew brace
    and $\Soc(B)=A\times \ker\lambda$.
    
    However, if $\varphi_p$  has order $p-1$
    for every prime $p$, then
    $\Soc(B)=A\times\{0\}$,
    which means that the quotient $B/\Soc(B) \cong \mathbb{Z}$, which is not periodic.
\end{exa}
The following lemma is technical and probably well-known, but to be self-contained we add a proof.

\begin{lem}\label{oversight}
    A periodic subgroup of an automorphism group of a finitely generated abelian group is finite.
\end{lem}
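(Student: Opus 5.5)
Let $A$ be a finitely generated abelian group and $P\le \Aut(A)$ a periodic subgroup. The plan is to reduce to the well-known fact that periodic subgroups of $GL_n(\Z)$ are finite.

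First, write $A = T \oplus F$. Here $T$ is the (finite) torsion subgroup, and $F\cong \Z^n$ is a complement. The subgroup $T$ is characteristic, so every $\alpha\in P$ restricts to $T$. This gives a homomorphism $P\to \Aut(T)$, and its image is finite. Let $P_1$ be its kernel, which has finite index in $P$.

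Every $\alpha$ also induces an automorphism of $A/T\cong \Z^n$. This gives a homomorphism $P\to GL_n(\Z)$, whose image is periodic. Let $P_2$ be its kernel. It suffices to show two things: that the image in $GL_n(\Z)$ is finite, and that $P_1\cap P_2$ is finite.

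For the image, I invoke Minkowski's argument. The reduction map $GL_n(\Z)\to GL_n(\Z/3)$ has torsion-free kernel. Indeed, if $g=1+3^kM$ with $M\not\equiv 0 \pmod 3$ has prime order $p$, expanding $(1+3^kM)^p=1$ modulo $3^{2k+1}$ (with a small extra check when $p=3$) gives a contradiction. Hence a periodic subgroup of $GL_n(\Z)$ embeds into the finite group $GL_n(\Z/3)$, and so it is finite.

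For $P_1\cap P_2$, take $\alpha$ in it. Then $\alpha$ is the identity on $T$, and $\alpha(f)-f\in T$ for all $f\in F$. So $\alpha$ is determined by the homomorphism $\delta_\alpha\colon F\to T$, $f\mapsto \alpha(f)-f$. The map $\alpha\mapsto\delta_\alpha$ is injective, since $\alpha(t+f)=t+f+\delta_\alpha(f)$. Its codomain $\Hom(\Z^n,T)\cong T^n$ is finite. Hence $P_1\cap P_2$ is finite. This step does not even use periodicity.

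Combining the pieces, $P/(P_1\cap P_2)$ embeds into $P/P_1\times P/P_2$, which is finite, and $P_1\cap P_2$ is finite. Therefore $P$ is finite. The main obstacle is the $GL_n(\Z)$ step. If a clean citation is acceptable, I would simply cite Minkowski's theorem (Burnside–Schur) rather than redo the congruence computation.
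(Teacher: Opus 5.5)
Your proof is correct and follows essentially the same route as the paper. The paper writes $\Aut(A)\cong \Hom(\Z^k,G)\rtimes(\Aut(G)\times GL_k(\Z))$, notes that the kernel of the map to $GL_k(\Z)$ is finite, and applies Schur's theorem to the periodic image of $P$ in $GL_k(\Z)$; this is exactly your split into $P_1\cap P_2\hookrightarrow \Hom(\Z^n,T)$, $P/P_1\hookrightarrow \Aut(T)$ and $P/P_2\hookrightarrow GL_n(\Z)$, except that you sketch Minkowski's reduction-mod-$3$ argument instead of citing Schur and you never need to verify the full semidirect decomposition of $\Aut(A)$.
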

\begin{proof}
Let $A$ be a finitely generated abelian group.
Then there exists a finite abelian group $G$ and a positive integer $k$ 
such that $ A \cong G \times \Z^k$. 
We claim that $\Aut(A) \cong \Hom(\mathbb{Z}^k,G)\rtimes(\Aut(G) \times GL_k(\Z))$. 
Let $f\in \Hom(\mathbb{Z}^k,G)$ and define the automorphism $\tilde{f}: A \rightarrow A$
with $ \tilde{f}(a,b)=(a+f(b),b)$. 
Then, it can be readily verified that the set 
$H= \left\lbrace \tilde{f} \mid f \in \Hom(\mathbb{Z}^k,G)\right\rbrace$ is a normal subgroup of $\Aut(A)$. 
Consider $T = \left\lbrace \varphi \times \id_{\Z^k} \mid \varphi \in \Aut(G)\right\rbrace$ and $M = \left\lbrace \id_G \times \psi \mid \psi \in GL_k(\mathbb{Z})\right\rbrace$. 
Clearly, $T$ and $M$ commute. 
Hence, as $\Aut(A) = HTM$ and the intersection $H\cap TM = \id$, the claim follows. 

Let $P$ be a periodic subgroup of $\Aut(A)$. Note that both $T$ and $H$ are finite. Hence, $P\cap TH$ is finite. Moreover, 
$$P/(P\cap TH)\cong P+TH/TH \cong \Aut(A)/TH\cong GL_k(\mathbb{Z}),$$
which shows that $P/(P\cap TH)$ is a periodic subgroup of $GL_k(\mathbb{Z})$.
Hence, by a result of Schur (see \cite{robinson2}, 8.1.11), $P/ (P\cap TH)$ is finite, which proves that $P$ is finite.
\end{proof}

The following theorem gives a precise characterization of the skew braces that are $\theta_f$ and where every $\lambda_b$ has finite order. This is an extension of Theorem $2$ by Baer in \cite{FC} characterizing $FC$-groups.
\begin{thm}\label{satisfacttheorem}
    Let $B$ be a skew brace. Then, $B$ is a $\theta_f$-skew brace and every $\lambda_b$ has finite order if and only if the following conditions hold 

    \begin{itemize}
        \item every element of $B$ is contained in a strong left ideal that is finitely generated as a skew brace,
        \item every element in $B/\Soc(B)$ is contained in a finite strong left ideal,
    \end{itemize}
\end{thm}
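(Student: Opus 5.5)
We prove the two implications separately, following Baer's original argument for $FC$-groups.

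\emph{Forward direction.} Assume $B$ is $\theta_f$ and every $\lambda_b$ has finite order. Let $x\in B$. By \cref{leftidealgenerated}, the strong left ideal generated by $x$ is $L=\left<[x]_\theta\right>_+$. Since $[x]_\theta$ is finite, $(L,+)$ is finitely generated, so $L$ is finitely generated as a skew brace. This gives the first condition.

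For the second condition, pass to $\bar B=B/\Soc(B)$. By \cref{theta_f strong left ideal}, $\bar B$ is again $\theta_f$. By \cref{unsatisfacttheorem}, $\bar B$ is additively periodic. So every $\bar x\in\bar B$ is a $\theta_f$-element of finite additive order, and \cref{dietz} puts it inside a finite strong left ideal of $\bar B$.

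\emph{Reverse direction.} Assume both conditions hold. We first show $B$ is $\theta_f$. Take $x\in B$, and let $L$ be a strong left ideal containing $x$ that is finitely generated as a skew brace. Let $\pi\colon B\to B/\Soc(B)$ be the canonical map.
\begin{enumerate}
\item The image $\pi(L)$ is a strong left ideal of $B/\Soc(B)$ and is finitely generated as a skew brace.
\item By the second condition each generator of $\pi(L)$ lies in a finite strong left ideal. The sum of finitely many finite strong left ideals is again a finite strong left ideal, since finitely many finite normal subgroups of $(B/\Soc(B),+)$ generate a finite normal subgroup and $\lambda$-invariance is preserved. Hence $\pi(L)$ is contained in a finite strong left ideal, so $\pi(L)$ is finite.
\item Therefore $|L:L\cap\Soc(B)|<\infty$.
\end{enumerate}
We now want to apply \cref{lemma4}, which needs the extra hypothesis $\Soc(B)\subseteq\theta_f(B)$. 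This is the main obstacle.

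For $s\in\Soc(B)$ we have $[s]_+=\{s\}$ and $[s]_\theta=[s]_\lambda$, so we must show that $[s]_\lambda$ is finite. The plan is to avoid \cref{lemma4} for this step and argue directly.
\begin{enumerate}
\item Take a finitely generated strong left ideal $L\ni s$. By the argument above, $|L:L\cap\Soc(B)|<\infty$.
\item Since $L$ is additively generated by its $\lambda$-images, I expect to reduce to showing that $(L,+)$ is finitely generated.
\item If $(L,+)$ is finitely generated, then $(L\cap\Soc(B),+)$ is a finitely generated abelian group.
\item The map $b\mapsto\lambda_b|_{L}$ is a homomorphism from $(B,\circ)$ into $\Aut(L,+)$, since $L$ is a left ideal. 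Restricting to $L\cap \Soc(B)$ gives a homomorphism into $\Aut(L\cap\Soc(B),+)$.
\item The image in $\Aut(L\cap\Soc(B),+)$ should be periodic. Indeed, for $b\in B$, $\pi(b)$ lies in a finite strong left ideal of $B/\Soc(B)$, so $\pi(b)$ has finite multiplicative order by \cref{Dietzmann cor}. Hence $b^{\circ n}\in\Soc(B)\subseteq\ker\lambda$ for some $n$, and $\lambda_b$ has finite order.
\item By \cref{oversight}, this periodic image is finite, so the $\lambda$-orbit of $s$ is finite.
\end{enumerate}
Step 5 also gives, for every $b\in B$, that $\lambda_b$ has finite order, which is the second half of the required conclusion. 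With $\Soc(B)\subseteq\theta_f(B)$ established, \cref{lemma4} yields $L\subseteq\theta_f(B)$, and hence $x\in\theta_f(B)$.

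The delicate point is step 2 above, namely the finite generation of $(L,+)$ without yet knowing that $B$ is $\lambda_f$. If that fails, I would instead argue by working inside the finitely generated sub skew brace $L$ and use \cref{fg} once $L$ is known to be $\lambda_f$ on its generators.
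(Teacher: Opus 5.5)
Your plan is the paper's own proof. The forward direction is identical: the first condition comes from $\left<[x]_\theta\right>_+$, and the second from \cref{unsatisfacttheorem} plus \cref{dietz} in $B/\Soc(B)$. In the reverse direction the paper also shows $\Soc(B)\subseteq\theta_f(B)$ first, by restricting $\lambda$ to the finitely generated abelian group $L\cap\Soc(B)$ and invoking \cref{oversight}, and then concludes with \cref{lemma4}. But the step you flag, step 2 (finite generation of $(L,+)$), is left unproved. It is genuinely needed: without it $L\cap\Soc(B)$ is not known to be finitely generated, and \cref{oversight} cannot be applied in step 6. Your worry that this requires knowing $B$ is $\lambda_f$ is misplaced. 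The finiteness needed is intrinsic to $L$ as a skew brace in its own right, using only the maps $\lambda_b|_L$ with $b\in L$.

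The missing idea is one line, and you already proved its input, namely $|L:L\cap\Soc(B)|<\infty$. Since $\Soc(B)\subseteq\ker\lambda\cap Z(B,+)$, you have $L\cap\Soc(B)\subseteq\Soc(L)$. Hence $|L:\Soc(L)|<\infty$, and \cref{oversocfinite} shows that $L$ is a $\theta_f$-skew brace. Equivalently, by \cref{inker} the subgroup $L\cap\ker\lambda$ has finite index in $(L,\circ)$. So $\{\lambda_b|_L : b\in L\}$ is finite, and $L$ is $\lambda_f$ by \cref{lambdafkerfinite}. Since $L$ is finitely generated as a skew brace, \cref{fgtheta} (or \cref{fg}) gives that $(L,+)$ is finitely generated. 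Concretely, \cref{gens} applied inside $L$ shows $(L,+)$ is generated by the images of a finite generating set under the finitely many maps $\pm\lambda_b|_L$, $b\in L$. This is exactly the paper's argument, and it is also the first step in the proof of \cref{lemma4}.

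With this inserted, the rest of your proof is correct. Your step 5 also yields the finite order of each $\lambda_b$ more directly than the paper's final appeal to \cref{unsatisfacttheorem}. There you do not need \cref{Dietzmann cor}: a finite strong left ideal is a finite sub skew brace, so $\pi(b)$ has finite multiplicative order outright.
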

\begin{proof}
    First, if $B$ is $\theta_f$ and every $\lambda_b$ is of finite order, then the strong left ideal generated by an element $x \in B$ coincides with the subgroup generated by the finite set $[x]_{\theta}$, which shows the first condition. Second, by \cref{unsatisfacttheorem}, it follows that $x$ is a torsion element. Hence, \cref{dietz} guarantees the second condition.

    Vice versa, assume $B$ is a skew brace where every element is contained in a finitely generated strong left ideal and every element of $B/\Soc(B)$ is contained in a finite strong left ideal. We first prove that $\Soc(B) \subseteq \theta_f(B)$. Let $y \in \Soc(B)$. Consider a finitely generated strong left ideal $L$ of $B$ containing $y$. Then, $L+\Soc(B)/\Soc(B)$ is a finitely generated strong left ideal of $B/\Soc(B)$. Hence, by the second condition, $$ |L: L \cap \Soc(B)| =| L+\Soc(B) : \Soc(B) | <\infty.$$ As $ L \cap \Soc(B)\subseteq \Soc(L)$, it follows by \cref{oversocfinite} that $L$ is $\theta_f$. Hence, by \cref{fgtheta}, the additive group $(L,+)$ is finitely generated. Hence, as $L\cap \Soc(B)$ is of finite index, its additive group is also finitely generated. Hence, $y$ is contained in a strong left ideal $I=L\cap \Soc(B)$ contained in $\Soc(B)$ and is finitely generated as a skew brace. As $I$ is contained in $\Soc(B)$ it follows that $I$ is an ideal of $B$ and $(I,+)$ is a finitely generated abelian group. The action of $(B,\circ)$ via $\lambda$ on $I$ induces a group morphism $$ \psi : (B,\circ) \rightarrow \Aut(I,+),$$ where $\Soc(B) \subseteq \ker \psi$. Hence, the image of $\psi$, by the second condition, is a periodic subgroup of $\Aut(I,+)$. As the latter is the automorphism group of a finitely generated abelian group, the image of $\psi$ is a finite group, by \cref{oversight}. In particular, this implies that the action of $(B,\circ)$ on $I$ has finite orbits, which shows that $y \in \lambda_f(B)$, which in turn shows that $\Soc(B) \subseteq \lambda_f(B)$.
    
    Let $x \in B$. Then, $x$ is contained in a finitely generated strong left ideal $L$. Hence, $\Soc(B) + L/\Soc(B)$ is a finitely generated strong left ideal. By the second condition, $\Soc(B) +L / \Soc(B) \cong L /L\cap \Soc(B)$ is a finite strong left ideal. Hence, by \cref{lemma4}, it follows that $L \subseteq \theta_f(B)$. As $x \in L$, this shows $x \in \theta_f(B)$, showing that $B$ is $\theta_f$. Moreover, as $B / \Soc(B)$ is periodic, it follows by \cref{unsatisfacttheorem} that $\lambda_x$ is of finite order. This completes the proof.
\end{proof}

\begin{rem}
    It holds that if $(B,+)$ is $FC$ and all $\lambda$'s are of finite order, then $(B/\Soc(B),\circ)$ is periodic. Indeed, consider any $b \in B$. Then, there exists a positive integer $n$ such that $b^n \in \ker \lambda$. Hence, as $(B,+)$ is $FC$, it follows that there exists a positive integer $k$ such that $k (b^n) \in Z(B,+)$. As $b^n \in \ker \lambda$, one finds that $kb^n = b^{kn} \in Z(B,+) \cap \ker \lambda=\Soc(B)$, which shows the claim.
\end{rem}
\begin{rem}
    In \cite{neumann}, among the most widely known results on $FC$-groups, Neumann proved that the derived subgroup of an $FC$-group is periodic. Another well-known result on this topic is Schur's Theorem (see \cite{robinson2}, 10.1.4). One may naturally expect a similar behavior to arise for $\theta_f$-skew braces and $B^2$, but unfortunately, this does not hold: in \cref{es1} we provide an example of a $\theta_f$-skew brace in which $B^2$ is not periodic and $B/\Soc B$ is finite.
\end{rem}

\section{Applications to solutions}\label{secsolution}

 In this final section, we examine decompositions of solutions. We define the class of $\theta_f$ solutions, which can be understood as unions of finite subsolutions. Moreover, we introduce the $\theta_f$-center $\Delta_f(X,r)$ of a solution $(X,r)$. We show that $\theta_f$ solutions, up to injectivization, correspond to the fact that the structure skew brace is a $\theta_f$-skew brace and vice versa. Moreover, we establish that there is a natural factorization of any structure skew brace $B(X,r)$ into a strong left ideal generated by $\Delta_f(X,r)$ that is a $\theta_f$-skew brace and a strong left ideal. Finally, surprisingly, we show that for involutive solutions the strong left ideal generated by $\Delta_f(X,r)$ coincides with $\theta_f(B(X,r))$.

First, we introduce $\theta_f$-elements of a solution.
\begin{defn}
    Let $(X,r)$ be a solution of the Yang-Baxter equation. Then, we say that an element $x \in X$ is $\theta_f$, if there exists a decomposition $X=Y \cup Z$, where $Y, Z$ are subsolutions such that $x \in Y$ and $Y$ is finite. We say that a solution $(X,r)$ is $\theta_f$, if all the elements of $X$ are $\theta_f$.
\end{defn}

Hence, there is a natural notion of a $\theta_f$-center of a solution, which turns out to be well-behaved.

\begin{pro}\label{thetafofsolution}
    Let $(X,r)$ be a solution of the Yang-Baxter equation. Then, the elements of $X$ that are $\theta_f$ form a decomposition factor of $X$, i.e. these elements form a subsolution $Y$ such that $X\setminus Y$ is also a subsolution. We denote this $\Delta_f(X)$.
\end{pro}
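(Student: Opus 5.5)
The plan is to show that the set $\Delta_f(X)$ of $\theta_f$-elements is a union of finite decomposition factors, and that such a union is again a decomposition factor. For each $x\in\Delta_f(X)$ fix a finite decomposition factor $Y_x$ with $x\in Y_x$. Then $\Delta_f(X)=\bigcup_{x\in\Delta_f(X)} Y_x$. The inclusion $\supseteq$ holds because every $y\in Y_x$ is itself $\theta_f$, witnessed by the same finite factor $Y_x$. The inclusion $\subseteq$ is trivial.

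The paper notes that finite unions of decomposition factors are decomposition factors. Here, however, the union may be infinite, so the argument must be made directly. First I would record a characterization. A subset $Y\subseteq X$ is a decomposition factor if and only if both $Y$ and $Z=X\setminus Y$ are invariant under all maps $\lambda_x,\rho_x$, $\lambda_x^{-1},\rho_x^{-1}$ for $x\in X$, in the sense that $\lambda_x(Y)\subseteq Y$ for $x\in Y$ and $\lambda_z(Y)\subseteq Y$ for $z\in Z$, and similarly for the other maps. Concretely, $Y\times Y$ and $Z\times Z$ must be mapped onto themselves by $r$. Bijectivity of $r$ then forces $r(Y\times Z\cup Z\times Y)=Y\times Z\cup Z\times Y$.

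A cleaner route is the following reformulation: a subset $Y$ is a decomposition factor if and only if $r(Y\times Y)=Y\times Y$ and $r(Z\times Z)=Z\times Z$. I would now check these two conditions for $D=\Delta_f(X)$ and $Z=X\setminus D$.

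\begin{enumerate}
\item \emph{$r(D\times D)\subseteq D\times D$.} Take $x,y\in D$. Then $x\in Y_x$ and $y\in Y_y$, and $Y=Y_x\cup Y_y$ is a finite decomposition factor, being a finite union. Hence $r(x,y)\in Y\times Y\subseteq D\times D$. Applying the same argument to $r^{-1}$, which also preserves $Y\times Y$ since $r$ restricts to a bijection there, gives equality.
\item \emph{$r(Z\times Z)\subseteq Z\times Z$.} Take $z,w\in Z$ and write $r(z,w)=(u,v)$. Suppose $u\in D$; then $u\in Y_u$, a finite factor. Its complement $X\setminus Y_u$ is a subsolution containing both $z$ and $w$, because $z,w\notin D\supseteq Y_u$. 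So $r(z,w)\in (X\setminus Y_u)^2$, contradicting $u\in Y_u$. The same argument applies to $v$. The reverse inclusion comes from applying this to $r^{-1}$, or from the bijectivity of $r$ restricted to the subsolution $X\setminus Y_u$.
\end{enumerate}

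The main subtlety is in the second condition. One must make sure that "subsolution" means $r$ restricts to a bijection of $Y\times Y$, so that $r^{-1}$ also preserves these sets. For finite $Y$, injectivity already forces surjectivity onto $Y\times Y$. For the complement, I would argue that $r$ permutes $X\times X$ and preserves both $Y\times Y$ and the complement's square. Since the restriction to each is a bijection by the subsolution hypothesis, $r$ also permutes the mixed part.

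Non-degeneracy of the restrictions to $D$ and $Z$ follows in the same way. For $x\in D$, the maps $\lambda_x,\rho_x$ restricted to $D$ are unions of their bijective restrictions to the finite factors $Y$ containing $x$. For $z\in Z$, surjectivity onto $Z$ follows by taking preimages and applying the argument in the second condition.
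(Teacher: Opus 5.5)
Your proof is correct and follows the same route as the paper: you write $\Delta_f(X)$ as the union of the finite factors $Y_x$, each of which lies in $\Delta_f(X)$. The one difference is that the paper simply asserts that an arbitrary union of decomposition factors is again a decomposition factor. You instead verify directly, using only pairwise unions, the complements $X\setminus Y_u$ and non-degeneracy of the restrictions, that $\Delta_f(X)$ and its complement are subsolutions, which fills in the step the paper leaves unproved.
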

\begin{proof}
    Let $X=Y_i \cup Z_i$ for $i \in I$ be a family of decompositions of $X$. Then, $$ \bigcup_{i \in I} Y_i$$ is a decomposition factor of $X$. Hence, take for any $x \in X$ that is $\theta_f$ a corresponding decomposition $X=Y_x \cup Z_x $ with $x \in Y_x$ and $Y_x$ finite. Then, all elements of $Y_x$ are $\theta_f$. Denote $T$ the set of elements in $X$ that are $\theta_f$. Then, $Y_x \subseteq T$ for all $x \in T$. Hence, $$ T = \bigcup_{x \in T} \left\lbrace x \right\rbrace \subseteq \bigcup_{x \in T} Y_x \subseteq T.$$ Hence, $T$ is the decomposition factor $\displaystyle{\bigcup_{x\in T} Y_x}$, which shows the result. 
\end{proof}

\begin{rem}\label{surjectivedeltaf}
   Note that if $\varphi \colon (X,r) \rightarrow (Y,s)$ is a surjective morphism of solutions, then $\varphi(\Delta_f(X)) \subseteq \Delta_f(Y)$. 
\end{rem}

The following corollary shows that elements that are $\theta_f$ will behave closely like elements of a finite solution, as they are contained in a finite decomposition factor. In particular, the solution $r$ has finite order on any tuple of $\theta_f$-elements. This is an extension of the result obtained in \cite{s}, where it was obtained in the context of $(S)$-skew braces.
\begin{cor}
    Let $(X,r)$ be a solution. Then, any finite number of $\theta_f$-elements are contained in a finite decomposition factor. In particular, if $a,b \in X$ are $\theta_f$, then there exists a positive integer $n$ such that $r^{2n}(a,b)=(a,b)$. 
\end{cor}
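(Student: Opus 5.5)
The first claim follows from the Boolean algebra structure of decomposition factors noted after the definition of decomposition factors. Let $x_1,\dots,x_k$ be $\theta_f$-elements of $X$. By definition, for each $i$ there is a decomposition $X=Y_i\cup Z_i$ with $x_i\in Y_i$ and $Y_i$ finite. A union of decomposition factors is again a decomposition factor, as recorded in the remark preceding \cref{decompgivesfactorization} and used in the proof of \cref{thetafofsolution}. Hence $Y=Y_1\cup\dots\cup Y_k$ is a decomposition factor, and it is finite as a finite union of finite sets. It contains all the $x_i$, which proves the first assertion.

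For the second assertion, let $a,b\in X$ be $\theta_f$ and let $Y$ be a finite decomposition factor containing $a$ and $b$. Since $Y$ is a decomposition factor, $(Y,r_{Y\times Y})$ is a subsolution. In particular $r(Y\times Y)\subseteq Y\times Y$, so $r$ restricts to a map $r_Y\colon Y\times Y\to Y\times Y$. This restriction is injective because $r$ is bijective. As $Y\times Y$ is finite, $r_Y$ is a permutation of the finite set $Y\times Y$, so it has finite order $N$ in $\operatorname{Sym}(Y\times Y)$.

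Now take $n=N$. Then $r^{2n}(a,b)=r_Y^{2N}(a,b)=(a,b)$. Equivalently, $r^2$ restricted to $Y\times Y$ is also a permutation of finite order $n$, giving $r^{2n}(a,b)=(a,b)$ directly.

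The only point that needs care is that the restriction $r_{Y\times Y}$ really maps $Y\times Y$ into itself. This is exactly what it means for $(Y,r_{Y\times Y})$ to be a subsolution, so there is no real obstacle. All the content sits in the closure of decomposition factors under unions, which the paper has already stated.
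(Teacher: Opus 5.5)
Your proposal is correct and follows the paper's proof: the paper also takes the union $\bigcup_{i} Y_{x_i}$ of the finite decomposition factors that witness each $x_i$ being $\theta_f$, and relies on the stated fact that unions of decomposition factors are again decomposition factors. The paper does not write out the ``in particular'' claim, and your argument supplies it correctly: $r$ restricts to a permutation of the finite set $Y\times Y$, so it has finite order there.
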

\begin{proof}
    Let $x_1,...,x_n \in X$ be $\theta_f$. Then, for each $1\leq i \leq n$ one has that $x_i \in Y_{x_i}$ with $Y_{x_i}$ a finite decomposition factor. Hence, the decomposition factor $\displaystyle{\bigcup_{i=1}^n} Y_{x_i}$ is finite and contains $x_1,...,x_n$.
\end{proof}

In the next result, we connect $\theta_f$-elements of solutions with $\theta_f$-elements of its associated structure skew brace.

\begin{thm}\label{deltafinthetaf}
    Let $(X,r)$ be a solution. Then, $$ \left< \Delta_f(X,r)\right>_+ \subseteq \theta_f(B(X,r)). $$ Furthermore, denoting $(B(X,r),r_B)$ for the solution associated to the skew brace $B(X,r)$ one has that $\theta_f(B(X,r)) = \Delta_f(B(X,r),r_B).$
\end{thm}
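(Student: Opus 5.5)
Here $\Delta_f(X,r)$ is read through the canonical map $\iota\colon X\to B=B(X,r)$; the first claim is about the additive subgroup generated by $\iota(\Delta_f(X))$. Since $\theta_f(B)$ is a strong left ideal (\cref{theta_f strong left ideal}), and in particular an additive subgroup, it suffices to show that $\iota(x)$ is a $\theta_f$-element of $B$ for every $x\in\Delta_f(X)$. Fix such an $x$, with a decomposition $X=Y\cup Z$, $x\in Y$ and $Y$ finite. By \cref{decompgivesfactorization}, $\langle Y\rangle_+$ is a strong left ideal of $B$. Its $\lambda$-action preserves the finite set $\iota(Y)$: the skew brace $\lambda_y$ restricts on $\iota(X)$ to the solution's $\lambda_y$, and $\lambda_y,\lambda_y^{-1}$ map $Y$ into $Y$ because $Y$ is a decomposition factor (the same holds for $Z$). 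Since $(B,\circ)$ is generated by $\iota(X)$, every $\lambda_b$ permutes $\iota(Y)$, so $[\iota(x)]_\lambda\subseteq \iota(Y)$ is finite.

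For additive conjugates, I will use the fact that $(B,+)$ is generated by $\iota(X)$ and that additive conjugation by $\iota(z)$ corresponds to the derived solution: $-\iota(z)+\iota(y)+\iota(z)$ equals the image of some derived-solution value, which lies in $\iota(Y)$ when $y\in Y$. This holds because the derived solution $r'$ restricts to subsolutions and $Y,Z$ are both subsolutions, so $\sigma_z(Y)\subseteq Y$ for all $z\in X$. Using the derived relations $y+z=z+\sigma_z(y)$, additive conjugation by generators and their inverses permutes $\iota(Y)$, so $[\iota(x)]_+\subseteq\iota(Y)$. Therefore $[\iota(x)]_\theta\subseteq\iota(Y)$ is finite. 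I expect this step to be the main technical point: checking that the derived map and its inverse preserve $Y$ exactly. I would handle it with the formula $r'(x,y)=(y,\lambda_y\rho_{\lambda_x^{-1}(y)}(x))$ and the invariance of both $Y$ and $Z$ under all $\lambda$'s and $\rho$'s.

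For the second claim, take the solution $(B,r_B)$. If $b\in\theta_f(B)$, set $Y=[b]_\theta$, which is finite. Since $\lambda$ and the additive conjugations preserve it, it should be a decomposition factor of $r_B$. Indeed, $r_B(a,c)=(\lambda_a(c),\overline{\lambda_a(c)}\circ a\circ c)$, and one rewrites $\overline{\lambda_a(c)}\circ a\circ c=\lambda^{-1}_{\lambda_a(c)}(-\lambda_a(c)+a+\lambda_a(c))$, which is a $\theta$-image of $a$. Hence $Y$ and its complement (also $\theta$-invariant, because $\theta$ acts by bijections) are both subsolutions, and $b\in\Delta_f(B,r_B)$. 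Conversely, if $b\in\Delta_f(B,r_B)$ with finite factor $Y$, the same formula shows that $\lambda_a(Y)\subseteq Y$ for all $a$ and that $\rho$-maps preserve $Y$. Applying the first part to the solution $(B,r_B)$, whose structure skew brace maps onto $B$ via the skew brace structure, and then invoking \cref{theta_f strong left ideal} under that epimorphism, gives $b\in\theta_f(B)$. More directly, $\lambda_a(b)\in Y$ and $-a+b+a$ can be expressed through $\lambda$ and $\rho$ images of $Y$ elements, so $[b]_\theta\subseteq Y$ is finite.
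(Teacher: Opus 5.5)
Your proposal is correct and is essentially the paper's argument. The $\theta$-action on $\iota(X)$ is generated by the solution's maps $\lambda_y$ and the derived maps $\sigma_z$, which realize additive conjugation, and these preserve every decomposition factor, so $[\iota(x)]_\theta\subseteq\iota(Y)$. For the second claim, the identities $\rho_c(a)=\theta_{(\overline{d},\overline{d})}(a)$ with $d=\lambda_a(c)$, and $\sigma_c(a)=-c+a+c$, spell out what the paper calls clear from the definitions.

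The differences are only cosmetic. You work with $\iota(Y)$ directly instead of first reducing to injective solutions via \cref{surjectivedeltaf}. You also need only the containment of the $\theta$-class in $\iota(Y)$, not the paper's stronger claim that it equals the smallest decomposition factor containing $x$.
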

\begin{proof}
    By \cref{surjectivedeltaf}, it is sufficient in order to prove the first claim to consider an injective solution. Hence, consider an $x \in X$. Then, for $a,b \in B(X,r)$ one has that $$\theta_{a,b}(x) = \sigma^{-1}_a\lambda_b(x) \in X.$$ In particular, the $\theta$-class of $x$ as an element of the skew brace $B(X,r)$ coincides with the smallest decomposition factor of $X$ that contains $x$. Thus, if $x \in \Delta_f(X,r)$, then the latter is finite, which shows that $x \in \theta_f(B(X,r))$.

    The second claim clearly follows from the definitions.
\end{proof}
The following corollary follows directly from \cref{deltafinthetaf}.
\begin{cor}\label{thetafsolisthetafskew}
    Let $ (X,r)$ be a solution. If $(X,r)$ is $\theta_f$, then $B(X,r)$ is a $\theta_f$-skew brace. Vice versa, if $B(X,r)$ is a $\theta_f$-skew brace and $(X,r)$ is injective, then $(X,r)$ is $\theta_f$.
\end{cor}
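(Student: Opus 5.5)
The plan is to derive both directions from \cref{deltafinthetaf} together with the surjective-morphism behaviour of $\Delta_f$.

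\textbf{First implication.} Suppose $(X,r)$ is $\theta_f$, so $\Delta_f(X,r)=X$. By \cref{deltafinthetaf},
\[
\left<\iota(X)\right>_+\subseteq\theta_f(B(X,r)).
\]
The additive group of $B(X,r)$ is generated by $\iota(X)$. This holds because $B(X,r)\cong B(\iota(X),r_B)$, and the additive group is $G(X,r')$, which is generated by the image of $X$. Hence $\left<\iota(X)\right>_+=B(X,r)$, and therefore $B(X,r)=\theta_f(B(X,r))$.

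There is one subtlety: \cref{deltafinthetaf} reduces to the injective case through \cref{surjectivedeltaf}. That remark gives $\iota(\Delta_f(X))\subseteq\Delta_f(\iota(X))$, so the injectivization is also $\theta_f$, and we may argue directly with $\iota(X)$. As an alternative, one could note that $\theta_f(B)$ is a strong left ideal by \cref{theta_f strong left ideal}, so it is enough that its generators $\iota(x)$ lie in it.

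\textbf{Converse.} Now let $(X,r)$ be injective and suppose $B=B(X,r)$ is a $\theta_f$-skew brace. Identify $X$ with $\iota(X)\subseteq B$. As in the proof of \cref{deltafinthetaf}, for $x\in X$ and $a,b\in B$, the element $\theta_{(a,b)}(x)$ lies in $X$. Moreover, the smallest decomposition factor of $X$ containing $x$ coincides with the $\theta$-orbit $[x]_\theta$. Since $B$ is $\theta_f$, this orbit is finite.

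It remains to check that $[x]_\theta$ really is a decomposition factor of $X$. The orbits of the action of $(B,+)\rtimes_\lambda(B,\circ)$ restricted to $X$ partition $X$. Each orbit is stable under all $\lambda_y$ and $\sigma_y$ for $y\in X$, which are exactly the maps governing $r$. So each orbit, together with its complement (a union of orbits), forms a subsolution. Consequently $x$ lies in a finite decomposition factor, hence $x\in\Delta_f(X,r)$, and $(X,r)$ is $\theta_f$.

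\textbf{Expected obstacle.} The main step to verify is the identification of $\theta$-orbits of elements of $X$ with the minimal decomposition factors, in particular that $\rho$ preserves orbits. For this I would use the formula $\rho_y(x)=\overline{\lambda_x(y)}\circ x\circ y$ and rewrite it as a $\theta$-image of $x$ under $\lambda$ and additive conjugation, in the same way the paper wrote $\theta_{a,b}(x)=\sigma_a^{-1}\lambda_b(x)$. This takes the argument in \cref{deltafinthetaf} as given.
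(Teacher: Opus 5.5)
Your proposal is correct and follows the paper's route. Both implications are read off from \cref{deltafinthetaf}: for the first, you use that $\iota(X)$ generates $(B(X,r),+)$; for the converse, you use that when $(X,r)$ is injective the $\theta$-orbit of $x\in X$ is a finite decomposition factor containing $x$. Your explicit check that $\rho$ preserves $\theta$-orbits, which the paper leaves implicit, does go through: with $u=\lambda_x(y)$ one gets $\rho_y(x)=\overline{u}\circ x\circ y=\overline{u}+\lambda_{\overline{u}}(x)-\overline{u}=\theta_{(\overline{u},\overline{u})}(x)$.
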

As the canonical epimorphism of $(X,r)$ to its injectivization is a factor of the retract morphism \cite{colazzo2023structure}, one can actually conclude the following.
\begin{rem}
    Let $(X,r)$ be a solution such that $B(X,r)$ is a $\theta_f$-skew brace, then $\textup{Ret}(X,r)$ is $\theta_f$.
\end{rem}
However, one can not extend \cref{thetafsolisthetafskew} further to conclude that the original solution is $\theta_f$, as shown by the following example.
\begin{exa}
    Consider the map $r\colon \mathbb{Z} \times \mathbb{Z} \rightarrow \mathbb{Z} \times \mathbb{Z}$ such that $r(x,y) = (y,x+1)$. Then, in $G(\mathbb{Z},r)$ one finds that $i \circ i = i \circ (i+1),$ which shows that the image of $\mathbb{Z}$ is one element and $B(\mathbb{Z},r) \cong \mathbb{Z}$ as a trivial skew brace. Note that this skew brace is clearly $\theta_f$, but the original solution is not, as it consists of a single $\theta$-class.
\end{exa}
Now, we examine the inclusion obtained in \cref{deltafinthetaf} in more detail and observe that for involutive solutions this is an equality.
\begin{pro}\label{involuttheta}
    Let $(X,r)$ be an involutive solution. Then, $$ \theta_f(B(X,r)) = \left< \Delta_f(X,r)\right>_+.$$
\end{pro}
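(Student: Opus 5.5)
One inclusion is \cref{deltafinthetaf}, so the plan is to prove $\theta_f(B(X,r)) \subseteq \left<\Delta_f(X,r)\right>_+$. Since $(X,r)$ is involutive, it is injective, $B=B(X,r)$ is of abelian type, and $(B,+)$ is the free abelian group on $X$. In this setting $\theta_{(a,b)}=\lambda_b$ restricts to a permutation of $X$: in the proof of \cref{deltafinthetaf} the additive conjugation is trivial and $\sigma_a=\id$. Hence the $\theta$-orbit of $x\in X$ is its $\mathcal{G}$-orbit, where $\mathcal{G}=\left<\lambda_x\mid x\in X\right>$ is the permutation group. As in the proof of \cref{deltafinthetaf}, the decomposition factors of an involutive solution are exactly the $\mathcal{G}$-invariant subsets; for involutive solutions $\rho$ is determined by $\lambda$. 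So $x\in \Delta_f(X,r)$ if and only if the $\mathcal{G}$-orbit $O_x$ of $x$ in $X$ is finite.

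Let $w\in\theta_f(B)$ and write $w=\sum_{x\in X} n_x x$ in the free abelian group $\Z^{(X)}$. Its support is $S=\{x : n_x\neq 0\}$, a finite set. The task is to show that every $x\in S$ has a finite $\mathcal{G}$-orbit; then $w\in\left<\Delta_f(X,r)\right>_+$. Each $\lambda_b$ acts on $\Z^{(X)}$ by permuting the basis $X$, so it maps $w$ to $\sum n_x \lambda_b(x)$, whose support is $\lambda_b(S)$. Since $w$ has only finitely many $\lambda$-images, the family of sets $\{\lambda_b(S)\mid b\in B\}$ is finite. Therefore every $g\in\mathcal{G}$ satisfies $g(x)\in\bigcup_{b}\lambda_b(S)$ for $x\in S$, which is a finite union of finite sets. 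Hence $O_x$ is finite for each $x\in S$, so $S\subseteq \Delta_f(X,r)$.

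The main point needing care is the identification of $\theta$-orbits and decomposition factors with $\mathcal{G}$-orbits for involutive solutions. I will use that $\lambda_b$ permutes the free basis $X$ of $(B,+)$, which is standard since $\lambda$ extends the solution's $\lambda$-maps to $B(X,r)$, and that a $\mathcal{G}$-invariant subset $Y$ is a decomposition factor. For the latter, involutivity gives $\rho_y(x)=\lambda^{-1}_{\lambda_x(y)}(x)\in Y$ whenever $x\in Y$. The complement of $Y$ is also $\mathcal{G}$-invariant, so finite orbits yield finite decomposition factors. This step is only bookkeeping and should not be a real obstacle.
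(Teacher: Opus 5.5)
Your proposal is correct and follows essentially the same route as the paper. Both arguments use that $(B(X,r),+)$ is free abelian on $X$ and that each $\lambda_b$ permutes this basis, so $\textup{supp}(\lambda_b(w))=\lambda_b(\textup{supp}(w))$. Both then bound the orbit of each $x\in\textup{supp}(w)$ by $|\textup{supp}(w)|\cdot|[w]_\theta|$. Your explicit check that a finite $\mathcal{G}$-orbit is a decomposition factor, via $\rho_y(x)=\lambda^{-1}_{\lambda_x(y)}(x)$, only spells out the identification of $\theta$-classes with minimal decomposition factors that the paper takes implicitly from the proof of \cref{deltafinthetaf}.
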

\begin{proof}
    By \cref{deltafinthetaf}, one only needs to prove the inclusion $$\left< \theta_f(B(X,r)) \subseteq \Delta_f(X,r)\right>_+ .  $$ 

    Let $u \in \theta_f(B(X,r))$. As $(B(X,r),+)$ is free abelian, the element $u$ has a unique decomposition $u=d_1x_1+\dots+d_kx_k$ for some mutually different $x_i \in X$ and $d_i \in \mathbb{Z}\setminus\left\lbrace 0 \right\rbrace.$ Denote the support of $u$ by $$ \textup{supp}(u)=\left\lbrace x_1,\dots ,x_k\right\rbrace.$$
    Hence, we need to show that $\textup{supp}(u)\subseteq \Delta_f(X)$. For any $a \in B(X,r)$ the map $\lambda_a$ is an automorphism of $B(X,r)$. Hence, $$ \textup{supp}(\lambda_a(u)) = \lambda_a(\textup{supp}(u)).$$
    Note that for any $i$ one has that $[x_i]_{\theta} \subseteq \textup{supp}([u]_{\theta})$. Hence, $$ |[x_i]_{\theta}|\leq |\textup{supp}( [u]_{\theta})| \leq |\textup{supp}(u)| |[u]_{\theta}| <\infty.$$ Thus, the result follows.
\end{proof}

Note that \cref{involuttheta} is surprising. Indeed, one can view \cref{involuttheta} as the statement that an element is $\theta_f$ if and only if it is the product of generators that are $\theta_f$. A statement that is inherently false for $FC$-elements in groups. Indeed, consider the elements $ab,b \in D_{\infty} =\left< a,b \mid b^2=1, bab=a^{-1}\right>$, then $a=ab b$ is an $FC$-element, whereas $ab$ and $b$ are not $FC$. The following example shows that \cref{involuttheta} can not be extended to non-involutive solutions.
\begin{exa}
    Consider the set $$X = \left\lbrace a^ib \mid i \in \mathbb{Z}\right\rbrace \subset D_{\infty}=\left<a,b \mid b^2=1,bab=a^{-1}\right>$$ and define the map $r\colon X \times X \rightarrow X \times X$ with $r(g,h)=(h,h^{-1}gh)$. Note that $\Delta_f(X,r)=\emptyset$. However, one can show that in $B(X,r)$ one has the relation $ab \circ b = a^{i+1}b \circ a^ib$ for all $i \in \mathbb{Z}$. Moreover, $[ab\circ b]_{\theta}= \left\lbrace ab \circ b,b \circ ab\right\rbrace, $ which shows that $ab \circ b \in \theta_f(B(X,r))$.
\end{exa}

\section*{Acknowledgments}
The first author is a member of GNSAGA (INdAM).
The second author is supported by Fonds Wetenschappelijk Onderzoek - Vlaanderen, via a PhD Fellowship for fundamental research, grant 11PIO24N. 
All authors are members of AGTA - Advances in Group Theory and Applications. The authors thank the anonymous referee for their careful review and useful comments.

\bibliographystyle{abbrv}
\bibliography{refs}

\end{document}